\newtheorem{theorem}{Theorem}
\newtheorem{definition}{Definition}
\newtheorem{assumption}{Assumption}
\newtheorem{lemma}{Lemma}
\newtheorem{remark}{Remark}
\newtheorem{prop}{Proposition}
\newtheorem{example}{Example} % Louis
\newtheorem{question}{Question} % Louis
\newtheorem{problem}{Problem} % Louis
\def\eps{\varepsilon}
\def\beq{ \begin{equation} }
\def\eeq{ \end{equation} }
\def\mn{\medskip\noindent}
\def\ep{\epsilon}
\def\square{\vcenter{\vbox{\hrule height .4pt
			\hbox{\vrule width .4pt height 5pt \kern 5pt
				\vrule width .4pt} \hrule height .4pt}}}
\def\RR{\mathbb{R}}
\def\ZZ{\mathbb{Z}}
\def\clearp{}
\def\P{{\mathbb P}}     % Probability (Louis)
\def\E{{\mathbb E}}     % Expectation (Louis)
\def\<{{\langle}} %Louis
\def\>{{\rangle}} %Louis
\def\1{{\bf 1}}         % Indicator (Louis)
\begin{document}

\begin{frontmatter}
	
	% "Title of the paper"
	\title{Stochastic PDEs on graphs as scaling limits of discrete interacting systems}
	\runtitle{SPDE on graphs}

	\begin{aug}
%\author{\fnms{Wai-Tong Louis} \snm{Fan}\thanksref{t1}\ead[label=e1]{waifan@iu.edu}}
		
%\thankstext{t1}{Research supported by NSF grants DMS--1804492 and DMS--1149312 and ONR grant N00014-20-1-2411.}
		
\author{Wai-Tong Louis Fan}\footnote{
	Department of Mathematics, Indiana University, Bloomington.}\footnote{Center of Mathematical Sciences and Applications, Harvard University, Cambridge.}
		
		%\affiliation{Indiana University}{waifan@iu.edu}
		%{lfan@cmsa.fas.harvard.edu}

	\end{aug}

	\begin{abstract}
	Stochastic partial differential equations (SPDE) on graphs were recently introduced by Cerrai and Freidlin \cite{MR3634278}.
	%[Ann. Inst. Henri Poincar{\'e} Probab. Stat. 53 (2017) 865-899]. 
	This class of stochastic equations in infinite dimensions provides a minimal framework for the study of the effective dynamics of much  more complex systems. 
	However, how they emerge from microscopic individual-based models is still poorly understood, partly due to complications near vertex singularities. 		
	In this work, motivated by the study of the dynamics and the genealogies of expanding populations in spatially structured environments,
	we obtain a new class of SPDE on graphs of Wright-Fisher type which have nontrivial boundary conditions on the vertex set. We show that these SPDE arise as scaling limits of suitably defined biased voter models (BVM), which extends the scaling limits of Durrett and Fan \cite{MR3582808}.
	%[Ann. Appl. Probab. 26 (2016) 456-490]. %and Mueller and Tribe [Probab.Theory Related Fields 102 (1995) 519-545]. 
	We further obtain a convergent simulation scheme for each of these SPDE in terms of a system of  It\^o SDEs, which is useful when the size of the BVM is too large for stochastic simulations. 
	These give the first rigorous connection between SPDE on graphs and more discrete models, specifically, interacting particle systems and interacting SDEs. 
	Uniform heat kernel estimates for symmetric random walks approximating diffusions on graphs are the keys to our proofs.
	Some open problems are provided as further motivations of our study.
		\end{abstract}
	
	\begin{keyword}[class=MSC]
		\kwd[Primary ]{60K35, 60H15}
		\kwd[; secondary ]{92C50.}
	\end{keyword}
	
	\begin{keyword}
		\kwd{Stochastic partial differential equation}
		\kwd{Graph}
		\kwd{Interacting particle system}
		\kwd{Numerical scheme}
		\kwd{Duality}
		\kwd{Scaling limit}
		\kwd{Population dynamics}
		%\kwd{Genealogies.}
	\end{keyword}
	
\end{frontmatter}

\section{Introduction}

Stochastic partial differential equations (SPDE) on graphs, more precisely SPDE whose spatial variables lie in a metric graph, first explicitly appear in Cerrai and Freidlin \cite{MR3634278, cerrai2019fast} as asymptotic limits of SPDE on two-dimensional domains that shrink to a graph. Here a graph $\Gamma$ is a continuous object consisting of all points on its edges, so the real line $\RR$ is a trivial example which has one edge and no vertex. 
These equations 
%SPDE whose independent spatial variable in an element of a graph, 
%form {\it a distinguished object} of study among stochastic equations in infinite dimensions, primarily because they 
provide a ``{\it minimal}" framework for the study of the interplay between the solution of SPDE and the geometric properties of the underlying metric space, ``{\it minimal}" in the sense that the metric space is essentially one-dimensional yet flexible enough to incorporate nontrivial topologies and various boundary conditions on the vertex set. Such interplay between the evolution of the quantity (e.g. density of a population or concentration of a chemical) modeled by the equation and the spatial environment of the system
is of fundamental importance in scientific modeling and control. 
For example, an important problem in ecology is to
identify mechanisms that permit the coexistence of species in different geographical environments.
The role of space and stochasticity in shaping competition outcomes and biodiversity has been intensively studied in spatial evolutionary games. It has also been explored rigorously in the framework of interacting particle systems  (a.k.a.~stochastic cellular automata), as in Durrett \cite{MR2521876}, Lanchier and Neuhauser \cite{MR2209349, MR2739344} and 
Lanchier \cite{MR3161301},
to name just a few. See the seminal articles of Durrett and Levin \cite{durrett1994importance, durrett1994stochastic} about the importance of space in modeling. 

\medskip

A practical motivation for our study of  SPDE on graphs is to provide a theoretical foundation for  previous experimental work \cite{endler2003propagation, inankur2015development} and on-going modeling work %\cite{FBZY17}  
on co-infection spread of defective and normal viruses. Here co-infection means simultaneous infection by two or more different types of virus particles.
Instead of the traditional petri dish, designed micro-arrays of network structures are used as the container of host cells \cite{endler2003propagation,  inankur2015development}. Virus infections and co-infections are then systematically initiated and observed. %See Figure ??? borrowed from \cite[Chapter 2]{inankur2015development}.
The aim is
to {\it predict the propagation speed and the spatial patterns for viral co-infections in spatially structured populations} of biological host cells. Insights obtained from these laboratory studies are potentially applicable to more complex real life epidemic networks, which is important in controlling epidemic spread \cite{riley2015five}.

Quantitative imaging and analysis of viral infection provides
extensive spatial-temporal data for validation and refinement of models.$\,$
However, a reliable mathematical framework is still missing. Deterministic models like PDE on graphs fail to capture the dynamics of viral particles or genomes, because fluctuations of  propagating infection fronts are typically observed. A more reasonable macroscopic model is instead an SPDE on the graph formed by the host cell environment. The question, then, is to {\it deduce the ``correct" SPDE on the graph}, based on the local and spatial interactions between the viral particles and the host cells. For example, where does the observed noise come from; more specifically, what is the magnitude of the noise term in the SPDE in terms of microscopic rules? Can local interactions near a vertex singularity lead to the emergence of new terms in the SPDE? In co-infection spread, what population-level signatures reveal emergence of new levels of cooperation and conflict between the defective and normal viruses?
In an on-going work, the author is developing various stochastic spatial models, including individual-based models \cite{MR2153370} such as interacting particle systems  and systems of stochastic reaction-diffusion equations, to model the joint evolution of defective and normal viruses. This paper  provides the theoretical foundation for \cite{endler2003propagation, inankur2015development}, which is still missing in the mathematics literature. 

\medskip

Another broader motivation for this paper is related to our long term goal to understand the {\it  genealogies in expanding populations and the resulting patterns of genetic heterogeneity}. 
This is important because medical treatments in cancer or epidemics may fail due to drug resistance, if one does not have an accurate knowledge of the mutational types present.
The genetic forces at work in a growing cancer tumor or in an infection spread are very similar to those in a population expanding into a new geographical area, in which most of the advantageous mutation occur near the front. See Lee and Yin \cite{lee1996detection} and Edmonds et al. \cite{edmonds2004mutations}. 
Existing studies for genealogies in expanding populations mostly rely on computer simulations \cite{klopfstein2005fate} and nonrigorous arguments \cite{hallatschek2007genetic, hallatschek2008gene, hallatschek2010life, korolev2012selective, lehe2012rate, nullmeier2013coalescent}. The first rigorous analysis for this is perhaps in Durrett and Fan \cite{MR3582808}, which provides a precise description of the lineage dynamics in terms of a coupled SPDE  of Fisher-Kolmogorov-Petrovsky-Piscounov (FKPP) type. However, the spatial domain is restricted to $\RR$. A rigorous analysis for the genealogies in $\RR^d$ for dimensions $d \ge 2$ seems difficult. Even more, a solution theory for the stochastic FKPP is not yet available in two or higher dimensions. 
Therefore, besides the ``minimality" mentioned in the first paragraph, SPDE on metric graphs provide a natural setting for further analysis. See Section \ref{S:Open} for some concrete open problems.

\medskip

{\bf Main question and significance. } 
Even though SPDE on graphs and their deterministic counterpart arise naturally in scientific problems and discoveries,  they are rather unexplored. In the mathematics literature, a subset of these equations seem to first (and so far only) explicitly have appeared in the rather recent work \cite{MR3634278, cerrai2019fast}. 
This is partly because SPDE is still considered to be a rather new and technical modeling approach compared with deterministic models, but
a more important reason is that it is not yet clear 
{\bf how  these equations emerge from interactions in the microscopic scale}, especially interactions near vertex singularities. This fundamental question, our focus in this work as suggested by our title, needs to be carefully investigated in order to answer more specific questions such as those raised in the previous paragraph about epidemic spread. $\,$ 
With increasingly advanced technology, more and more experimental data
describe both cell-level and population-level behavior. Thus,
connecting continuum models with discrete models not only can facilitate model
validation at both scales, but also provides complementary
perspectives of the complex dynamics (such as tumor growth and virus spread) under study. Increasing recognition
of these benefits has stimulated efforts to connect discrete
and continuum models in a variety of biological and ecological contexts; see  \cite{durrett1994importance} for different modeling perspectives and \cite{durrett1994stochastic, codling2008random, MR2521876} for some reviews.

\medskip

{\bf Known results. }
In standard settings such as domains in $\RR^d$, there are rigorous approximation schemes of SPDE by microscopic particle models, such as Sturm \cite{MR1986838} and Kotelenez and Kurtz \cite{MR2550362}, where \cite{MR1986838} is for a stochastic heat equation with a multiplicative colored noise term and  \cite{MR2550362} is for SPDE of McKean-Vlasov type.
Particle representations of  SPDE  are obtained by Kurtz and coauthors in \cite{MR1705602, etheridge2014genealogical, crisan2016particle}. 
Cox, Durrett and  Perkins \cite{MR1756003} showed that the rescaled long range voter model in dimension $d\geq 2$ converges to a super-Brownian motion. Durrett,  Mytnik and Perkins \cite{MR2164042} showed convergence of multi-type contact processes to a pair of
super-Brownian motions interacting through their collision local times.
For genealogies of super-processes, see the snake process of Le Gall \cite{MR1207305}, the historical process of Evans and Perkins \cite{MR1615329} and the lookdown process of Donnelly and Kurtz \cite{MR1728556}. 
$\;$
Our two practical motivations led us to first focus on the  stochastic FKPP, the base case model for an expanding population density exhibiting noisy wavefront, which is of the form
\begin{equation}\label{fkppR}
	\partial_t u      \,= \alpha\,\Delta u +\beta\,u(1-u) + \sqrt{\gamma\,u(1-u)}\dot{W},
\end{equation}
where $\dot{W}$ is the space-time white noise, $\alpha>0$ is the diffusion coefficient representing the average dispersal distance of the individuals,  $\beta\geq 0$ accounts for an average increase  and the last term represents fluctuations during reproduction where $\gamma\geq 0$ parametrizes the variance.

Muller and Tribe  \cite{MR1346264} gave the first rigorous convergence result that stochastic FKPP on  $\RR$ can arise as scaling limit of long range biased voter models (BVM). In \cite{MR3582808}, we generalized this  by scrutinizing all possible scalings for which this type of connections between SPDE \eqref{fkppR} and BVM are valid. These BVM  are idealized individual-based models for  an expanding population on the 1-dimen rescaled integer lattice $L_n^{-1}\ZZ$ whose points are called \textit{demes}. 
In the $n$-th model in \cite{MR3582808}, there is one cell at each point of 
the lattice
\begin{equation*}\label{Lattice1}
(L_n^{-1}\ZZ) \times \{1, \ldots, M_n \},
\end{equation*}
whose cell-type is either 1 (cancer cell) or 0 (healthy cell). Each cell in deme $w \in L_n^{-1}\ZZ$ only interact with the $2M_n$ neighbors  in  demes $w-L_n^{-1}$ and $w + L_n^{-1}$. Type-0 cells reproduce at rate $2M_nr_n$, type-1 cells at a higher rate $2M_n(r_n+ \beta_n)$ due to higher fitness. When reproduction occurs the offspring replaces a neighbor chosen uniformly at random. 
In the terminology of evolutionary games, this is birth-death updating. It is shown in \cite[Theorem 1]{MR3582808} that under the scalings  
\begin{equation}\label{scaling1}
	\frac{r_nM_n}{L_n^2}\to \alpha,\;M_n\beta_n\to \frac{\beta}{2},\;\frac{L_n}{M_n}\to \frac{\gamma}{4\alpha},\,L_n\to\infty \text{ as }n\to\infty,
\end{equation}
the local fraction of type 1 converges to the solution of equation \eqref{fkppR} on $\RR$.  The scalings \eqref{scaling1} were explained in \cite[Section 1.2]{MR3582808} and will be compared with those in this paper in Remark \ref{Algo}.

Connections between models of different scales, offered by these types of scaling limit theorems and also Theorems \ref{T} and \ref{prop:SDE} in this paper, not only provide complementary insights into the underlying mechanisms of the complex dynamical system, but are also of fundamental importance for model selection and analysis.
For instance, the above convergence tells us that 
the variance of the noise is of order $L_n/M_n$ near the wavefront where $u$ is bounded away from 0 and 1, which is important in predicting the propagation speed \cite{MR2793860}.
See %Brunet et al.
\cite{brunet2006phenomenological} for some behaviors that are expected to hold in general for the class of the stochastic pulled fronts with weak noise.

\medskip
%\mn
{\bf More related work. } 
For SDE in infinite dimensions, which is an abstract framework containing SPDE on graphs or on manifolds \cite{MR1730556, MR1919608}, see \cite{MR3236753} for general background and \cite{MR2785545} a comparison between different theories.
Despite recent breakthroughs including \cite{MR3071506, MR3274562, MR3406823, caravenna2015universality}, a solution theory for the stochastic FKPP in the dimensions $d\geq 2$ is still open.

One might replace the white noise by a  colored noise as in \cite{DKR09, MR1986838} to smooth out spatial correlations. This standard approach indeed gives well-defined SPDE in higher dimensions, but the particle approximation would be less intuitive.
%and particle approximations can be found in  \cite{MR1986838}. 
There is also a large body of literature %, mainly in mathematics and in statistical physics, 
about SPDE arising as the fluctuation limits of interacting particle systems, which we do not attempt to give a survey. We refer the reader to  \cite{MR3298474}  for the stochastic burgers equation and \cite{MR3529160} for reaction-diffusion equations, and the references therein.

\mn
{\bf Mathematical contributions. } Our main contributions are as follows.

\begin{enumerate}[leftmargin=*]	
	
	\item Our scaling limit theorems give {\bf the first rigorous  connections between SPDE on graphs and discrete models}.  %(specifically, interacting particle system and interacting SDEs). 
	%which in particular connect  macroscopic parameters with  microscopic transition rules. 
	See Remark \ref{Algo}.  $\,$	
	Theorem \ref{T} generalizes \cite[Theorem 1]{MR3582808} 
	to the graph setting, paying special attention to the new spatial heterogeneity and vertex singularities. A new stochastic FKPP on graph emerges from a suitably defined BVM (Section \ref{S:BVM}). 	 
	In Theorem \ref{prop:SDE}, we give a convergent simulation scheme for this SPDE in terms of a system of interacting It\^o SDEs. 
	This scheme%, based on a {\it coupling} between the interacting SDEs and the SPDE \eqref{fkpp1}, 
	is useful when the size of the BVM is too large for stochastic simulations.

	\item Besides having a different type of limit theorems compared to \cite{MR3634278, cerrai2019fast}, we consider more general diffusions on graphs $\Gamma$. In fact, we identify suitable conditions on $\Gamma$ for the study of SPDE on graphs and
%than those considered in \cite{MR1245308, MR1743769, MR3634278, cerrai2019fast}. 
point out directions for generalizations. See Remark \ref{Rk:Compare}.
Our SPDE have an extra  boundary condition at the set of vertices, and the coefficients are typically non-Lipschitz. Well-posedness of these SPDE is established via a new duality in Lemma \ref{L:WFdual}.

	\item An application of results like the conjunction of Theorems \ref{T} and \ref{prop:SDE} is as follows: 
	Given a complex system (such as cancer cell dynamics) with fine details, one (i) starts with an individual-based model which elucidate, at a more fundamental level, single-particle interactions, spatial component and stochasticity of the system, then (ii) deduces the macroscopic evolution of the particles that emerges, as in Remark \ref{Algo}, which might be an SPDE/PDE describing the evolution of the particle density, and (iii) simulates the SPDE/PDE which is robust against the size of the microscopic system, using the interacting SDEs or numerical methods. We summarize this lesson for our case as: 
	%$\;$BVM $\to$ SPDE $\to$ interacting SDEs.
	$$\text{BVM } \to \text{ SPDE }\to \text{ interacting SDEs} $$
	Benefits actually go both ways: an intuitive way to understand SPDE is through scaling limits of discrete approximating systems, similar to the way one interprets Brownian motion as scaling limits of random walks.
	
	%\item   Furthermore, the discretization parameters on different edges can be different.	
	\item  %Besides the need to deal with vertex singularities throughout the proofs, 
	Besides vertex singularities,
	a technical challenge in the proofs is to obtain  {\it uniform estimates of the transition kernel} of random walks on a discretized version $\Gamma^n$  of $\Gamma$.
	For this we need to impose an assumption on $\Gamma$. The volume-doubling property and the Poincar\'e inequality in Assumption \ref{A:graph} are enough for this paper, and we point to further generalizations in Remark \ref{Rk:graph}. Uniform estimates for the random walks and the diffusions obtained in Theorems \ref{T:RWHK}-\ref{T:DiffusionHK} and also the local CLT are of crucial importance in analyzing regularity properties of SPDE on $\Gamma$ in general.
	
	%All of these discrete systems involve spatially discretizing the underlying graph $\Gamma$ into a sequence of lattices $\Gamma^n$. \textbf{Uniform estimates of the transition kernel} $p^n(t,x,y)$ (defined in \eqref{Def:DiscreteHK}) of random walks on $\Gamma^n$ play a crucial role in the proofs of convergence in all two approximating scheme.  
	%For technical convenience they are continuous time Markov processes, but our method is useful to establishing analogous results for the discrete time versions of these processes.
	
	\item  The scalings discovered in Theorem \ref{T} enable one to generalize, to the graph setting, scaling limit results for coupled SPDE such as  \cite[Theorem 4]{MR3582808}. This is a key step towards the study of interacting populations of more than one species. Broadly speaking, this paper points to directions for various generalizations, such as defining  SPDE on random graphs and on fractals, studying SPDE defined through Walsh diffusions instead of symmetric diffusions, extending scaling limits of contact processes in \cite{MR1346264} to the graph settings, etc. See Sections \ref{S:Generalize} and \ref{S:Open} for more generalizations and open questions.
\end{enumerate}
%The models considered in this paper are {\it not} meant to be tested against empirical data directly. They are meant to provide theoretical foundations and serve as base case models that can be refined in various ways for scientific hypothesis testings in specific cases.

%to explicitly take into account single cell biochemistry of virus-host interactions and spatial diffusions.
%Establishing scaling limit results for SPDE from discrete models are {\bf challenging} in general: the choice of microscopic rules for the discrete model is delicate and only works for a certain window, the proofs rely on heavy machinery and technical estimates which might not be available in the literature, in some cases, well-posedness of the SPDE is not even known.  Such scaling limits results are rare: RickFan and MullerTribe, works for Super processes.
\mn
%{\bf Organization of the paper. } 
The paper is organized as follows: We give preliminaries in Sections \ref{S:Prelim} and \ref{S:BVM}, including  assumptions on the graph $\Gamma$, diffusions and SPDE and the construction of BVM, before stating our main results in Section \ref{S:main}. 
Uniform heat kernel estimates for random walks on the discretized graph $\Gamma^n$ and the local CLT are obtained in Section
\ref{S:DHK}.
Sections \ref{S:Pf1} and \ref{S:SDE} are proofs for Theorem \ref{T} and Theorem \ref{prop:SDE}. Some extensions of our method and some open problems are offered in
Section \ref{S:Generalize} and Section \ref{S:Open} respectively. Finally, the notions of weak solutions and mild solutions to SPDE on graphs are written down in the Appendix for completeness.

\clearp 
	
\section{Diffusions and SPDE on metric graphs}\label{S:Prelim}
A {\bf metric graph} $(\Gamma,\,d)$ is a topological graph $\Gamma=(V,\,E,\,\partial)$ endowed with a metric $d$, where $V$ and $E$ are disjoint countable sets and $\partial$ is a map. 
The elements of $V$ and $E$ are respectively the vertices and the directed edges of $\Gamma$. 
The combinatorial structure of the graph is described by the map $\partial:\,E \to V\times (V\cup\{\infty\})$ which sends every edge $e$ to
an ordered pair $(e_{-},\,e_{+})$. For simplicity, we identify $e$ with its image $\partial (e)=(e_-,e_+)$. We call $e_-$ and $e_+$  the {\it initial vertex} and the {\it terminal vertex} of $e$ (self-loops, i.e. $e_+=e_-$, are allowed).

The \textit{degree} of $v\in V$ is defined as
%\begin{equation}\label{deg(v)}
$deg(v):=|E(v)|=|E^+(v)|+|E^-(v)|$,
%\end{equation}
where $E^{\pm}(v):=\{e\in E:\,e_{\pm}=v\}$ consists of all edges starting ($-$) and ending (+) at $v$ respectively, and 
$E(v):=E^{+}(v)\bigcup  E^{-}(v)$ is their disjoint union. 
%Hence a self-loop at $v$ is considered as two elements in $E(v)$.
%

There are two types of edges.  If $(e_{-},\,e_{+}) \in V\times V$, then $e$ is called an \textit{internal edge}; if $(e_{-},\,e_{+}) \in V\times \{\infty\}$, then $e$ is called  an \textit{external edge}. 
Each internal edge is isomorphic to a closed and bounded interval in $\RR$, and each external edge is isomorphic to $[0,\infty)$.
We identify  $\Gamma$ with the union of all edges.
The metric $d$ on $\Gamma$ is defined in the canonical way as the length of a shortest path between two points along the edges. Internal edges have finite lengths and external edges have infinite lengths.
%It is convenient to identify each edge $e$ with the corresponding closed interval  $[e_{-},\,e_{+}]$ of the real line and 
We equip $\Gamma$ with the 1-dimensional Hausdorff measure $m$ associated with $d$ and  write this metric measure space as $(\Gamma,d,m)$.

\medskip

Denote by $\mathring{e}$ the interior of the edge $e$  and $\mathring{\Gamma}=\Gamma\setminus V=\cup_{e\in E}\,\mathring{e}$ be the interior of $\Gamma$. 
For a function $f:\Gamma\to \RR$, we define 
\begin{itemize}%[leftmargin=*]
	\item  $\nabla f_{\pm}(x)$ to be the one sided derivative of $f$ at $x\in \mathring{e}$, along $e$ {\it towards} $e_{\pm}$, 
	\item $\nabla f_{e\pm}(v)$ be the one sided derivative of $f$ at $v=e_{\mp}\in V$, along  $e$ {\it towards} $e_{\pm}$
\end{itemize}
whenever they exist.
A function $f$ is said to be differentiable at $x\in \mathring{e}$ if $\nabla f_{+}(x)=-\nabla f_{-}(x)$, in which case this quantity $\nabla f(x)=\lim_{\mathring{e}\,\owns\, y\to x} \frac{f(y)-f(x)}{d(x,y)}$ is called the derivative of $f$ at $x\in \mathring{e}$. 
Higher derivatives $\nabla^kf$ (for $k\geq 2$) are defined similarly. 

Recall that an edge  $e\in E$ is isomorphic to a closed interval in $\mathbb{R}$.
The space of $k$-times continuously differentiable on $e$ is 
\[
\mathcal{C}^k(e):=\left\{ g\in  \mathcal{C}^k(\mathring{e}):\, \nabla^rg \text{ is uniformly continuous on bounded subsets of }\mathring{e} \text{ for all } r\leq k
\right\}.
\]
If $g\in \mathcal{C}^k(e)$, then $\nabla^rg$ continuously extends to the closed set $e$ for all $r\leq k$.
%When $f$ is $k$-times continuously differentiable on $e$, we write $f|_e\in \mathcal{C}^k(e)$. 
We also let %the following function spaces
\begin{itemize}
	\item $\mathcal{C}(\Gamma)$  be the space of continuous functions on $\Gamma$,
	\item $\mathcal{C}^k(\Gamma)\,:=\left\{ f\in  \mathcal{C}(\Gamma):\; f|_e \in \mathcal{C}^k(e)\; \text{for all }e\in E
	%\nabla f_{e\pm}(v)=\lim_{\mathring{e}\,\owns\, x\to v} \nabla f(x) 
	\right\}$ where $f|_e$ is the restriction of $f$ on $e$,
	\item $\mathcal{C}^{\infty}(\Gamma):=\cap_{k\geq 1}\mathcal{C}^k(\Gamma)$,
	\item $\mathcal{C}_c(\Gamma)$ be the space of continuous functions with compact support on $\Gamma$,
	\item $\mathcal{C}^k_c(\Gamma)\,:=\mathcal{C}^k(\Gamma)\cap \mathcal{C}_c(\Gamma)$,
	\item $\mathcal{C}^{\infty}_c(\Gamma):=\mathcal{C}^{\infty}(\Gamma)\cap \mathcal{C}_c(\Gamma)$.
\end{itemize}

For an arbitrary sigma-finite measure $\mu$ on $\Gamma$ (e.g., the Hausdorff measure $m$ or the measure $\ell(x)m(dx)$ in \eqref{Def:nu}), we define the $L^2$-norm of $f$ 
\[
\|f\|_{L^2(\mu)}:=\sqrt{\int_{\Gamma}|f(x)|^2\,\mu(dx)}=\sqrt{\sum_{e}\int_{e}|f(x)|^2\,\mu(dx)}
\]
and the Sobolev norm $\|f\|_{W^{1,2}(\mu)}:=\|f\|_{L^2(\mu)}+\|\nabla f\|_{L^2(\mu)}$.
The space $L^2(\Gamma, \mu)$ is the set of $\mu$-measurable functions $f$ with $\|f\|_{L^2(\mu)}<\infty$  and  the Sobolev space  
$$W^{1,2}(\Gamma,\mu):=\big\{f\in L^2(\Gamma,\mu): \,\nabla f\in L^2(\Gamma,\mu) \big\}.$$

%The function $f$ is said to be continuously differentiable on $\Gamma$ if  $\nabla f(x)$ is continuous on $\mathring{\Gamma}$ and $\nabla f_{e\pm}(v)=\lim_{\mathring{e}\,\owns\, x\to v} \nabla f(x)$ whenever $v=e_{\mp}$.We denote by $\mathcal{C}^1(\Gamma)$ the space of continuously differentiable functions on $\Gamma$.

%One has to be careful about the meaning of derivative of a function $f:\Gamma\to\RR$, since there are two directions to go starting at $x\in \mathring{e}$ and $deg(v)$ many directions starting at $v\in V$. For this we define 
% $$f_{\pm}'(x):=\lim_{y\to x}\frac{f(y)-f(x)}{d(x,y)}\quad $$

%\medskip

Our focus is particle approximation to a class of parabolic SPDE on $\Gamma$. 
Our notion of solution to such an SPDE, detailed in the Appendix, involves a diffusion on $\Gamma$. This motivates us to construct such diffusions next, under assumptions on $\Gamma$ and the diffusion coefficients.

\medskip

\noindent
{\bf Diffusions on metric graphs via Dirichlet forms. } Although a nontrivial diffusion can be defined on very general $\Gamma$ including fractals \cite{MR966175} and random graphs \cite{MR3034461,MR3630284},   
we make the following assumption on $\Gamma$, which ensures certain regularity on the transition density of symmetric diffusions. 
See Remark \ref{Rk:graph}.
%As explained in Section \ref{S:DHK}, once a diffusion is defined on $\Gamma$, the 2-sided Gaussian bound for the transition density is equivalent to the conjunction of the volume-doubling property and the Poincar\'e inequality stated in this assumption.
%This assumption already covers all finite graphs and many infinite graphs of interest. See Remark \ref{Rk:graph}.
\begin{assumption}\label{A:graph}
	The metric graph $(\Gamma,d,m)$  has a positive infimum for the branch lengths and satisfies the followings.
	\begin{enumerate}
		%\item (Bounded degree) $\sup_{v\in V} deg(v)<\infty$
		%\item (Lower bound for branch length) $\inf_{(a,b)\in E}d(a,b)>0$
		\item[1.] (Volume-doubling) There is a constant $C_{VD} >0$ such that
		\begin{equation}\label{VD_dif}
			m(B(x,2r))\leq C_{VD}\,m(B(x,r)) 
		\end{equation}
		for all $x\in \Gamma$ and $r>0$ where $B(x,r):=\{y\in \Gamma:d(x,y)<r\}$ is a ball. 
		\item[2.] (Poincar\'e inequality) There is a constant $C_{PI}>0$ such that
		\begin{equation}\label{PI_dif}
			\int_{B(x,r)}|f(y)-\bar{f}_{B}|^2\,m(dy) \leq C_{PI} \,r^2\int_{B(x,2r)}|\nabla f(y)|^2\,m(dy)
		\end{equation}
		for all $f\in W^{1,2}(\Gamma,m)$, $x\in \Gamma$, $r>0$, where $\bar{f}_{B}:=m^{-1}(B)\int_{B}f\,dm$ is the average value of $f$ over $B=B(x,r)$. 
	\end{enumerate}
\end{assumption}

%volume-doubling property\eqref{VD_dif} and Poincar\'e inequality \eqref{PI_dif}

\begin{remark}\rm\label{Rk:graph}
	Any graph with finitely many edges  satisfies  Assumption \ref{A:graph} with $C_{VD}=2\,\sum_{v\in V} deg(v)$ and $C_{PI}=C^2\sum_{v\in V} deg(v)$, where $C$ is the same absolute constant in Theorem 2 in \cite[Section 5.8]{MR2597943} with $p=2$ and $n=1$. 
	Many infinite graphs also satisfy  Assumption \ref{A:graph}. These include any regular infinite lattice such as $\ZZ^n$ and any infinite regular tree with constant branch length. 
	As we shall explain  in Subsection \ref{SS:HK}, the conjunction of \eqref{VD_dif} and  \eqref{PI_dif} is {\it equivalent to} the existence of two-sided Gaussian bounds for the transition density of symmetric diffusions on $\Gamma$. 
	%See Theorem \ref{T:DiffusionHK}. 
	Assumption \ref{A:graph} can be significantly relaxed.
	%implies that $\Gamma$ is closed and has no accumulation point in set $V$. This assumption 
\end{remark}

%The bounded degree requirement, the branch length requirement, together with the assumption on the coefficient $a(x)$, guarantee  that the Dirichlet form $\mathcal{E}$ is regular and hence the existence of $X$. This is reminiscent to requiring the boundary to be good enough in the construction of reflected diffusions in domains. 
%Several approaches can be used to construct and analyse diffusions on graphs. In  \cite{MR1245308,MR3634278,cerrai2019fast} such a process $Z$ is described in terms of its generator $(L, Dom(L))$ where $L$ is a second order differential operator acting in the interior of each edge, the domain $D(L)$ of $L$ encodes suitable gluing conditions.

We shall construct diffusions on graphs by the Dirichlet form method, under the following conditions on the diffusion coefficients and the symmetrizing measure.
%Suppose we are also given  functions $\ell$ and $\alpha$ on $\Gamma$ which are strictly elliptic, as follows.
\begin{assumption}\label{A:Coe_a}
	Suppose we are given two functions $\alpha,\,\ell \in 	W^{1,2}(\Gamma,m)$ that are strictly elliptic, i.e. bounded above and below by positive constants. 
	%and such that for all $e\in E$, one-sided limits
	%$$\alpha_{e\pm}:=\lim_{x\to e_{\pm}} a(x)\quad\text{and}\quad \ell_{e\pm}:=\lim_{x\to e_{\pm}} \ell(x) \quad\text{exist.}$$
	%one-sided limits $\alpha_e^v:=\lim_{x\in e:\,x\to v}\alpha(x)$ exist	
\end{assumption}

We now define the measure $\nu$ on $\Gamma$ by
\begin{equation}\label{Def:nu}
	\nu(dx)=\ell(x)\,m(dx),
	%\nu(A)=\sum_{e}\int_{e}1_{A}(x)\,\ell(x)\,m(dx)\quad \text{for all Borel subsets } A,
\end{equation}
which has full support and is locally finite, and we
consider the symmetric bilinear form 
\begin{align*}
	\mathcal{E}(f,g)&:=\int_{\Gamma} \alpha(x)\,\nabla f(x)\cdot\nabla g(x) \,\nu(dx)
	%\\ &=\sum_{e}\alpha_e \int_{e} \nabla f(x)\cdot\nabla g(x) \,m(dx)
\end{align*}
with domain $Dom (\mathcal{E})=W^{1,2}(\Gamma,\nu)\cap \mathcal{C}^1(\Gamma)$.
%in $L^2(\Gamma,\,\nu)$ with domain $Dom (\mathcal{E})=W^{1,2}(\Gamma)$.
It can be checked that $(\mathcal{E},\,Dom (\mathcal{E}))$ is a Dirichlet form in $L^2(\Gamma,\,\nu)$ that possesses the local property. See for instance  \cite{MR2849840}. Furthermore, $(\mathcal{E},\,Dom (\mathcal{E}))$ is {\it regular} under Assumptions \ref{A:graph} and \ref{A:Coe_a}. This can be checked (for instance \cite[Proposition 4.1]{MR3034461}) by showing that the space $\mathcal{C}^{\infty}_{c}(\Gamma)$ of infinitely differentiable functions on $\Gamma$ with compact support is a core of the domain. 
Hence by \cite[Theorem 7.2.1]{MR2778606}, there is a $\nu$-symmetric diffusion $X=\{X_t\}_{t\geq 0}$ on the graph $\Gamma$ associated with this Dirichlet form. Henceforth we refer to $X$ as the {\bf $\mathcal{E}$-diffusion} and denote by $\big(\Omega,\,\mathcal{F},\,\{\P_x\}_{x\in \Gamma}\big)$ the filtered probability space on which $X$ is defined, where $\P_x$ is a probability measure on $\big(\Omega,\,\mathcal{F}\big)$ such that $\P_x(X_0=x)=1$.  Whenever $\P$ appears in an expression, it denotes the probability measure on the space on which the random variables involved in that expression are defined. % and $\E$ denotes the expectation w.r.t. $\P$.

%A positive infimum for branch lengths (in Assumption \ref{A:graph}) ensures that there is no accumulation point for the set $V$ of vertices. This implies that there is a unique extension for the regular Dirichlet space. It is known [???] that there can be more than one way to specify boundary conditions for the Laplacian when this assumption is not enforced.

% has continuous one-sided derivative including the endpoints{\color{red}Follow notation of Sandra's work to save time and draw more connections. Use $\Gamma$ instead of $G$ for graph, use functions $\ell$ on edges.  Advantages of using Dirichlet form:(i) a more robust method which allows construction of diffusion processes on more general graphs and with different boundary conditons. (ii) The prove of invariance principle using Athreya's result.} 

\begin{remark}\rm \label{Rk:Compare}
	In \cite{MR1245308,MR3634278,cerrai2019fast} such a diffusion process is constructed  by specifying its generator under stronger assumptions: the set of vertices $V$ is finite, $\alpha=1$ is a constant function and $\ell$ is smooth in $\mathring{\Gamma}$.  We follow the notation in \cite{MR3634278,cerrai2019fast} for $\Gamma,\ell$ and $\nu$.
	Studying solutions to SPDE on graphs rely crucially on our understanding about certain  diffusion processes on graphs, which is a research topic by itself.  
	See \cite{MR1245308, MR1743769} for theoretical foundation of diffusions on finite graphs,  \cite{lejay2003simulating, MR2813404, MR3634968} for some interesting applications.% and also Remark \ref{Rk:Relax}. 
	%We deliberately use the same notations $\ell,\,\Gamma,\,\nu$ in this paper for the ease of comparison.
\end{remark}

As is known \cite{MR2778606, MR2849840, MR1214375}, the Dirichlet form method is more robust against irregularities of both the diffusion coefficients and the underlying metric space. The price to pay, however, is that many statements about the associated process $X$ are valid  apriori only for ``quasi-everywhere", that is, except for a set of capacity zero. 
Fortunately, most of these statements can be strengthened to be valid for ``all $x\in \Gamma$", provided that  we have extra knowledge about its transition density. 
By the usual $L^2$ method (see, e.g., \cite{carlen1987upper}),
$X_t$ admits a  transition density $p(t,x,y)$ with respect to its symmetrizing measure $\nu(dx)$. That is
\begin{equation}\label{Def:p}
	\P_x(X_t\in dy)=p(t,x,y) \nu(dy)
\end{equation} 
and $p(t,x,y)=p(t,y,x)$ for all $t\geq 0$, $\nu$-almost all $x,y\in \Gamma$. The previous `almost all' can further be strengthened to `all' because \eqref{VD_dif} and \eqref{PI_dif} imply
H\"older continuity for $p(t,x,y)$. See Theorem \ref{T:DiffusionHK} for the precise statement.

%\begin{assumption}\label{A:graph2}
%	The metric graph $(\Gamma,d,m)$ satisfies the followings.
%\end{assumption}

\medskip

\noindent
{\bf Gluing condition and generator. }
For any edge $e\in E$, it is known (for example \cite[Example 1.2.3]{MR2778606}) that 
every function $f\in W^{1,2}(e,m)$
coincides a.e. with an absolutely continuous function on $\mathring{e}$ having
derivative defined $m$-a.e. and lies in $L^2(e,m)$. In particular, under Assumption \ref{A:Coe_a}, the one-sided limits
$$\alpha_{e\pm}:=\lim_{x\to e_{\pm}} \alpha(x)\quad\text{and}\quad \ell_{e\pm}:=\lim_{x\to e_{\pm}} \ell(x) \quad\text{exist for all }e\in E.$$

As in \cite[Section 3]{MR1245308},
the $L^2(\Gamma,\,\nu)$ infinitesimal generator $\mathcal{L}$ of $X$ can be described as a second order differential operator
\begin{equation}\label{GeneratorL}
	% \mathcal{L}:=\alpha_e\,\partial_{xx}\quad \text{ on } \mathring{e}
	\mathcal{L}f(x):= \frac{1}{2\ell(x)}\nabla \big(\ell(x)\,\alpha(x)\,\nabla f(x)\big), \quad x\in \mathring{\Gamma},
\end{equation}
endowed with the gluing conditions 
\begin{align}\label{glue}
	0\,=\big(\nabla_{out}f\cdot [\ell\alpha]\big)(v), \quad v\in V,
\end{align}
where the following {\it outward} derivative of  $f$ at $v$ is used:
\begin{align}\label{OutDiv0}
	\big(\nabla_{out}f\cdot [g]\big)(v)
	\,&:= \sum_{e\in E^+(v)}\nabla f_{e-}(v)\,g_{e+}\,+\, \sum_{e\in E^-(v)}\nabla f_{e+}(v)\,g_{e-}.
\end{align}
The gluing condition \eqref{glue} reduces to the Neumann boundary condition at vertices with degree 1.
From \eqref{glue} and integration by parts, we can check that $\mathcal{E}(f,g)=\<-\mathcal{L}f,\,g\>_{L^2(\nu)}$ for $f\in Dom_{L^2}(\mathcal{L})$ and $g\in Dom(\mathcal{E})$, where $Dom_{L^2}(\mathcal{L})$ is the domain of the generator $\mathcal{L}$. See  \cite[Appendix A.4]{MR2849840} and  \cite[Chapter 1]{MR1214375} for general relations among the generator $\mathcal{L}$, the Dirichlet form  and the semigroup $\{P_t\}_{t\geq 0}$ of $X$.

\medskip

\noindent
{\bf SPDE on graphs. }In this work we focus on the following stochastic FKPP type equation on $\Gamma$  with nontrivial boundary condition:
\begin{equation}\label{fkpp0}
	\left\{\begin{aligned}
		\partial_t u      &\,= \mathcal{L} \,u +\beta\,u(1-u) + \sqrt{\gamma\,u(1-u)}\dot{W}
		& &\quad\text{on }\overset{\circ}{\Gamma} \\
		% \partial_x u(t,x) &\,=  0   & &\qquad \text{for  } t>0,\,x=0\\
		\nabla_{out}u\cdot [\alpha] &\,= -\hat{\beta}\, u(1-u)  & &\quad \text{on }V,
	\end{aligned}\right.
\end{equation}
where $\dot{W}$ is the space-time white noise on $[0,\infty)\times \Gamma$, the operator $\nabla_{out}$ is defined in \eqref{OutDiv0}, the graph satisfies Assumption \ref{A:graph}, 
the functions $\alpha, \ell$ satisfy 
Assumption \ref{A:Coe_a}, and the functions $\beta,\gamma,\hat{\beta}$ satisfy the following assumption. 
\begin{assumption}\label{A:Coe_b}
	Let $\beta,\,\gamma:\,\Gamma\to [0,\infty)$ be non-negative bounded measurable functions on $\Gamma$ and $\hat{\beta}:V\to [0,\infty)$ be a non-negative bounded function on $V$.
	% $\beta_e,\,\gamma_e,\,\hat{\beta}_v\in [0,\infty)$ are allowed to be zero.	
\end{assumption}
Here we adopt Walsh's theory (named after Walsh's St. Flour notes \cite{MR876085}) and regard \eqref{fkpp0} as a shorthand for an integral equation. 
See Definition \ref{Def:WeakSol} in Appendix for this integral equation and the definition of a weak solution to \eqref{fkpp0}. 

Note that when $\hat{\beta}(v)$ is  positive, the negative sign in the boundary condition corresponds to creation of mass (growth of $u$) at $v\in V$. 
To see this, note that $\nabla_{out}u\cdot [\alpha](v)$ is a sum of derivatives of $u$ \textit{away} from the vertex $v$, and therefore towards the interior of the edges connecting to $v$. These \textit{inward} normal derivatives along the edges correspond to a growth rather than a decay of $u$.

While  Assumptions \ref{A:graph} and \ref{A:Coe_a} guarantee existence and basic properties of the diffusion process $X$ on the graph, Assumption \ref{A:Coe_b} will be needed for the weak uniqueness of \eqref{fkpp0}. The latter  will be established via duality (Lemma \ref{L:WFdual}).

\medskip

To have a cleaner description without losing much generality of the spatial heterogeneity across edges, we further restrict to piecewise constant functions whenever a discrete approximation is involved (namely, in Theorem \ref{T}, Theorem \ref{prop:SDE}, Lemma \ref{Invar} and Theorem \ref{T:RWHK}).
%In our scaling limit results Theorem \ref{T} and Theorem \ref{prop:SDE}, 
\begin{example}\label{Eg:fkpp}
	Suppose  $\alpha(x)=\alpha_e$, $\beta(x)=\beta_e$ and $\gamma(x)=\gamma_e$ whenever $x \in \mathring{e}$, where  $\{\alpha_e,\,\beta_e,\,\gamma_e\}$ are non-negative numbers that are uniformly bounded and $\inf_{e\in E}\alpha_e>0$. Suppose  $\ell=1$ constant. Then SPDE \eqref{fkpp0} reduces to
	\begin{equation}\label{fkpp1}
		\left\{\begin{aligned}
			\partial_t u      &\,= \alpha_e\,\Delta u +\beta_e\,u(1-u) + \sqrt{\gamma_e\,u(1-u)}\dot{W}
			& &\quad\text{on }\overset{\circ}{e} \\
			% \partial_x u(t,x) &\,=  0   & &\qquad \text{for  } t>0,\,x=0\\
			\nabla_{out}u\cdot [\alpha] &\,= -\hat{\beta}\, u(1-u)  & &\quad \text{on }V.
		\end{aligned}\right.
	\end{equation}
	In this case  $\alpha_{e+}=\alpha_{e-}:=\alpha_e$ and for $v\in V$,
	\begin{align}\label{OutDiv}
		\big(\nabla_{out}u\cdot [\alpha]\big)(v)
		\,&=\sum_{e\in E(v)} \Big( {\bf 1}_{e_-=v} \nabla u_{e+}(v)+{\bf 1}_{e_+=v}\nabla u_{e-}(v) \Big)\,\alpha_{e}. 
	\end{align}
\end{example}

%\bigskip

\tikzset{
	big dot/.style={
		circle, inner sep=0pt, 
		minimum size=3mm, fill=black
	}
}

\begin{figure}
	\begin{center}
		\begin{tikzpicture}[scale=0.8]
		
		\node (O) at (0,4) {};
		
		\node[big dot] (V1) at (6,4) {};
		\node at (6,3.5) {\LARGE $v_1$};
		
		\node[big dot] (V2) at (12, 7) {};
		%\node at (12,6.5) {\LARGE $v_2$};
		\node at (12,6.5) {\LARGE $v_2$};
		
		\draw [line width=1mm] (O) -- (V1);
		\draw [line width=0.8mm] (V1) -- (V2);
		\draw [line width=0.8mm] (V1) -- (10.5,1);
		\draw [line width=0.8mm] (V2) -- (14.4, 5.8);
		
		\node at (1.5,5.5) {\Large $M^e$};
		\node at (3.05,3.5) {\Large $\frac{1}{L^e}$};
		\node at (0.7, 3.5) {\LARGE $e$};
		\node at (9.2, 1.3) {\LARGE $\widetilde{e}$};
		\node at (10, 0.5) {};
		%\node at (10, 5.5) {\LARGE $e_3$};%
		%\node at (15, 5) {\LARGE $e_4$};%
		
		\draw [-] (2,4) -- (2,7);
		\draw [-] (2.7,4) -- (2.7,7);
		\draw [-] (3.4,4) -- (3.4,7);
		\draw [-] (4.1,4) -- (4.1,7);
		\draw [-] (4.8,4) -- (4.8,7);
		\draw [-] (5.5,4) -- (5.5,7);
		
		\draw node[fill=green, circle, inner sep=0pt, minimum size=2.5mm] at (5.5,4) {};
		\draw node[fill=green, circle, inner sep=0pt, minimum size=2.5mm] at (4.8,4) {};
		
		\draw node[fill=red, circle, inner sep=0pt, minimum size=3mm] at (5.5,5.5) {};
		\node at (5.9,5.75) {\Large $w$};
		
		\draw node[fill=red, circle, inner sep=0pt, minimum size=3mm] at (7.5,4) {};
		\node at (7.9,4) {\Large $z$};
		
		\draw [-] (7,4.5) -- (7,5.5);
		\draw [-] (8,5) -- (8,6);
		\draw [-] (9,5.5) -- (9,6.5);
		\draw [-] (10,6) -- (10,7);
		\draw [-] (11,6.5) -- (11,7.5);
		
		\draw node[fill=green, circle, inner sep=0pt, minimum size=2.5mm] at (7,4.5) {};
		
		\draw [-] (12.8,6.6) -- (12.8,8.1);
		\draw [-] (13.6,6.2) -- (13.6,7.7);
		%\draw [-] (14.4,5.8) -- (14.4,7.3);
		%\draw [-] (15.2,5.4) -- (15.2,6.9);
		
		\draw [-] (7.5,3) -- (7.5,4.5);
		\draw [-] (9,2) -- (9,3.5);
		%\draw [-] (10.5,1) -- (10.5,2.5);
		
		\draw node[fill=green, circle, inner sep=0pt, minimum size=2.5mm] at (7.5,3) {};
		
		\draw [->,ultra thick, cyan] (5.5,5.5) to [out=0,in=135] (7,5);
		\draw [->,ultra thick, cyan] (5.5,5.5) to [out=180,in=0] (4.8,5.5);
		\draw [->,ultra thick, cyan] (5.5,5.5) to [out=225,in=180] (7.5,4);
		\end{tikzpicture}
		
		\caption{Lattice  $\Lambda_n:= \cup_{e}\Lambda^{e}_n$, together with %the particle dynamics near vertex $v_1$ described by the Poisson processes. Here 
			sites $w\in e^n$ and $z\in \tilde{e}^{\,n}$ such that $w\sim z$.}\label{Fig:Lattice}
	\end{center}
\end{figure}
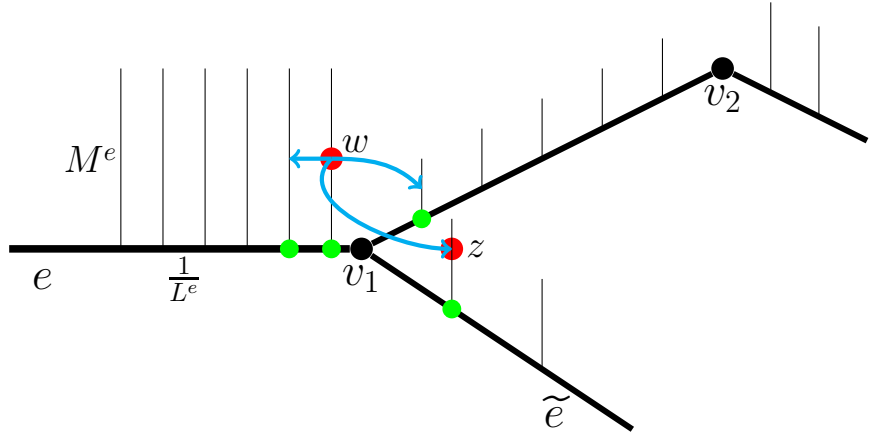	

\section{Rescaled biased voter model}\label{S:BVM}

In this section, we describe a sequence of biased voter models (BVM) indexed by  $n\in \mathbb{N}$, which is a natural generalization to the one in \cite{MR3582808} described in the introduction.
%Here the $n$-th BVM is a birth-death evolutionary game on a heterogeneous spatial structures represented by lattice $\Lambda_n$ below. See {\bf Figure \ref{Fig:Lattice}} for an illustration.
%which lead to SPDE \eqref{fkpp1}. 
%Let $\Gamma=(V,\,E)$ be a graph with vertex set $V$ and edge set $E$ (more detail in Section \ref{S:Prelim}). 
%The $n$-th BVM is a birth-death evolutionary game on a heterogeneous spatial structures represented by lattice $\Lambda_n$ below. 

%\noindent
%{\bf Lattice for BVM. } 
For each edge $e\in E$, we associate it with
two sequences $\{L^{e}_n\}_{n\geq 1}\subset (0,\infty)$ and $\{M^{e}_n\}_{n\geq 1}\subset \mathbb{N}$, then we choose a {\it maximal} countable subset $e^{n}$ of its interior $\mathring{e}$ such that 
neighboring points in $e^{n}$ are of distance $1/L^{e}_n$. Without loss of generality, by throwing away demes that are too close to the endpoints of an edge if necessary, we suppose
\begin{equation}\label{dxv}
	1/L^e_n \leq d(x,v) <2/L^e_n
\end{equation} 
whenever $x\in e^n$ is neighboring to vertex $v$.
Points in the  {\bf discretized graph} $\Gamma^{n}:=\cup_{e}e^{n}$ are called {\it demes}. Each deme $x\in e^n$ represents an isolated location containing a subpopulation of $M_n^e$ particles.  Two different demes $x, y\in \Gamma^{n}$ are said to be {\it neighbors} (denoted $x\sim y$) if either they lie on the same $e$ and $d(x,y)=1/L^{e}_n$ or if they are both adjacent to the same vertex $v\in V$. 

Our $n$-th BVM is defined on the {\bf discrete lattices}  ({\bf Figure \ref{Fig:Lattice}}) 
$$
\Lambda_n:= \cup_{e}\Lambda^{e}_n \qquad\text{where }
\Lambda^{e}_n:= \Big\{(x,\,i):\;x\in e^{n},\,i\in \{1, \cdots, M^{e}_n \} \Big\}.
$$
Points of $\Lambda_n$ are called {\it sites}. Each site contains one individual agent/particle,  which is of either type 1 or type 0.
In the context of cancer dynamics, we think of an agent/particle as a biological cell,  type 1 (cancer cell) and type 0 (normal cell). 
%See Remark \ref{RW} for  the context of virus infection spread.
A site $z$ is said to be on edge $e$ (denoted $z\in e^n$) if $z\in \Lambda^{e}_n$; it is said to be in deme $x$ if $z=(x,i)$ for some $i$.
Two different sites $z,w\in \Lambda_n$ are said to be {\it neighbors} (also denoted $z\sim w$) if they are located at two neighboring demes. %Particles in deme $x$ only interact with those in neighboring demes. 

\medskip
\noindent
{\bf Dynamics of BVM. }Particles in deme $x$ only interact with those in neighboring demes.
%Suppose there is one particle (individual agent) at each site, whose particle-type is either 1 or 0. 
Let $\xi_t(z):=\xi^{n}_t(z)$ be the type of the particle at site $z$ at time $t$. Our BVM  $(\xi_t)_{t\geq 0}$  can be constructed using  two independent families of  Poisson processes 
$\{P^{z,w}_t:\;z\sim w\}$ with rates $a^{z,w}$ and $\{ \tilde P^{z,w}_t:\;z\sim w\}$ with rates $b^{z,w}$.
\begin{comment}
such that
\begin{description}
\item[Case (i)] if $w$ is in a deme in $e^{n}$ which is {\it not} adjacent to any vertex, then 
$$a^{z,w}=\frac{\alpha_e\,(L^e)^2}{M^e}:=A^n_{e,e} \quad\text{and} \quad b^{z,w}=\frac{\beta_e}{M^e}:=B_e$$
\item[Case (ii)] if $w$ is in a deme in $e^{n}$ which is adjacent to $v\in V$ (as in  {\bf Figure \ref{Fig:Lattice}} with $v=v_1$) and if $z$ is on $\tilde{e}$, then
$$a^{z,w}=\frac{\alpha_e\,(L^e)^2\,q^{v}_{e,\tilde{e}}}{M^{e}}:=A^n_{e,\tilde{e}}^v \quad\text{and} \quad b^{z,w}=\frac{\beta^e_{v}\,q^{v}_{e,\tilde{e}}}{M^{e}}:=B_{e,\tilde{e}}^v $$
where $\beta^e_{v}:=\beta^e_{n,v}\geq 0$ is a given number and $\big(q_{e,\tilde{e}}^{v}:=q_{e,\tilde{e}}^{n,v}\big)$ is a given $|E(v)|\times|E(v)|$  matrix with positive entries.
\end{description} 
\end{comment}
At a jump time of $P^{z,w}_t$, the particle at $z$ is replaced by an offspring of the one at $w$. At a jump time of $\tilde P^{z,w}_t$, the particle at $z$ is replaced by an offspring of the one at $w$ {\bf only if $w$ has type 1}, so there is a ``bias" towards type 1. Under Assumption \eqref{A:graph}
the biased voter process $(\xi_t)_{t\geq 0}=(\xi^n_t)_{t\geq 0}$ is a Markov process with state space $\{0,1\}^{\Lambda_n}$.

%$\qquad$ Importance of diffusion on graphs. Diffusion processes on graphs can arise from various limiting or averaging procedures of diffusion processes on higher dimensional sets, as explained in \cite{MR1245308}.

\medskip

We suppress the superscript/lowerscript $n$'s in $\xi_t$, $a^{z,w}$, $b^{z,w}$, $L^e$ and $M^e$ to simplify notation. 

\section{Main results}\label{S:main}

The following assumption, to be explained in Remark \ref{Rk:RW} right after the main result, is crucial to both Theorems \ref{T} and \ref{prop:SDE}. 
For two different directed edges $e,\,\tilde{e}\in E(v)$ where $v\in V$, we let $L^{e,\tilde{e}}=(d(x,v)+d(v,y))^{-1}$  where $x\in e^n$ and $y\in \tilde{e}^n$ are two neighboring demes in $\Gamma^n$.  
	%Here we recall $d(x,y)$ is the distance between $x$ and $y$ in $\Gamma$. 
	For $e=\tilde{e}$ we let  $L^{e,e}=L^e$.
%denote the distance between two adjacent points $x,y\in \Gamma^n$ which lie on edges $e,\tilde{e} \in E(v)$ respectively. So $L^{e,e}=L^e$ and $L^{e,\tilde{e}}=(d(x,v)+d(v,y))^{-1}$ if $e\neq \tilde{e}$.
\begin{assumption}[Symmetric conductances]\label{A:C}
	Suppose  $\{C^n_{\tilde{e},e}\}_{e,\tilde{e}\in E(v),\,v\in V}$ are positive numbers satisfying symmetry $C^n_{e,\tilde{e}}=C^n_{\tilde{e},e}$ and 
	\begin{equation}\label{Cond_C}
		\sup_{v\in V}\sup_{e,\tilde{e}\in E(v)} \,\Big|C^n_{\tilde{e},e}-L^{e,\tilde{e}}\,\Theta(\alpha_e,\,\alpha_{\tilde{e}})\Big|\,L^e\to 0, \qquad \text{as }n\to\infty,
	\end{equation}
	for some symmetric continuous function $\Theta$ on $(0,\infty)^2$ such that  $a\leq \Theta(a,b)=\Theta(b,a) \leq b$ for all $a\leq b$ and that $\Theta(a,a)=a$ for all $a\in (0,\infty)$. In particular, $\frac{C^n_{e,e}}{L^e}\to \alpha_e$. 
\end{assumption}
Examples include $\Theta(a,b)=\sqrt{ab}\,$ and power means $\left(\frac{a^p+b^p}{2}\right)^{1/p}$ with $p\in \RR\setminus \{0\}$.

\subsection{Scaling limit of BVM}
%\section{Main results}
%Recall that we suppress the lowerscript $n$'s on $L^e$ and $M^e$ to simplify notation.

The principal result in this paper says that the approximate densities of our BVM converge to  SPDE \eqref{fkpp1} under suitable conditions, where the approximate density at deme $x\in  e^{n}$ is defined by
\begin{equation}\label{approx_den}
	u^n_t(x)  := \frac{1}{M^e} \sum_{i=1}^{M^e} \xi_t(x,i).
\end{equation}
For  $v\in V$, we define
\begin{comment}
u^n_t(v):=\frac{1}{|E(v)|}\sum_{\{x\in \Gamma^n\text{ is  adjacent to }v\}}u^n_t(x)
\end{comment}
$u^n_t(v)$ to be the average value of $\{u^n_t(x)\}$ among demes $x$ which are  adjacent to $v$. We then linearly interpolate between demes (and also between vertices and demes) to define $u^n_t(x)$ for all $x\in \Gamma$.
%We retain the superscript $n$ on $u^n_t$ to be able to distinguish the approximating process from its limit. 
Then for all $t\geq 0$, we have $u^n_t\in \mathcal{C}_{[0,1]}(\Gamma)$,  the set of continuous functions on $\Gamma$ taking values in the interval $[0,1]$. Furthermore, if we equip $\mathcal{C}_{[0,1]}(\Gamma)$ with the metric 
\beq
\|\phi\| := \sum_{i=1}^{\infty} 2^{-i} \sup_{x\in K_i}|\phi(x)|
\label{normC}
\eeq
where $\{K_i\}$ is an increasing sequence of compact subsets of $\Gamma$ with the union of the sequence being $\Gamma$.
i.e., uniform convergence on compact sets, then $\mathcal{C}_{[0,1]}(\Gamma)$ is Polish and the paths $t\mapsto u^n_t$ are $\mathcal{C}_{[0,1]}(\Gamma)$-valued and c\`adl\`ag. 
%\bigskip
Our main theorem is a weak convergence result for the sequence $\{u^n\}$ in the Skorohod space $D([0,\infty),\,\mathcal{C}_{[0,1]}(\Gamma))$.
\begin{theorem}\label{T}
	%Let $(\Gamma,\,d)$ be a metric graph satisfying Assumption \ref{A:graph} and let $\hat{\beta}:V\to [0,\infty)$ be a non-negative bounded function on $V$. Suppose $\alpha,\beta$ and $\gamma$ are piecewise constant functions as in Example \ref{Eg:fkpp}.	
	Let $(\Gamma,\,d)$ be a metric graph satisfying Assumption \ref{A:graph}. Let $\alpha,\beta$ and $\gamma$ be piecewise constant functions as in Example \ref{Eg:fkpp}, and  $\hat{\beta}:V\to [0,\infty)$ be a non-negative bounded function on $V$. 
	Suppose that as $n\to\infty$, the initial condition $u_0^n$ converges in $\mathcal{C}_{[0,1]}(\Gamma)$ to $f_0$, and that the following assumptions for $L^e,\,M^e,\,a^{zw}$ and $b^{zw}$ hold.
	\begin{enumerate}%[leftmargin=*]
		\item[(a)] $1/L^e \to 0$ uniformly for all $e\in E$ and \eqref{dxv} holds.
		%; \eqref{dxv} and Assumption \ref{A:L_comparable} hold.
		\item[(b)] $4\,L^e/M^e \to \gamma_e/\alpha_e$ for all $e\in E$.
		%\item[(c)] $\beta^e_{v}/L^e\to 0$ whenever $e$ is incident to $v$ 
		%\item[(d)] $M^{\tilde{e}}B^v_{e,\tilde{e}}/L^e\to 0$ whenever $e,\,\tilde{e}\in E(v)$, for all $v$.
		\item[(c)]  $a^{z,w}=2\,C^n_{\tilde{e},e}L^e/M^e$ for all $z \in \tilde{e}^n,\, w \in e^n$  such that $z\sim w$, where
		$\{C^n_{\tilde{e},e}\}$ are positive numbers that satisfy Assumption \ref{A:C}.
		\item[(d)] 
		$b^{z,w}=B^n_e$ whenever $w\in e^n$ is not adjacent to a vertex in $V$ and  $z\sim w$, where $\{B^n_e\}$ are non-negative numbers such that $2\,B^n_e\,M^e\to \beta_e$.
		\item[(e)] For each $v\in V$, let $x^e_1$ be the element on $e^n$ adjacent to $v$ and $x^e_2\in e^n$ be adjacent to $x^e_1$. Suppose
		$\widehat{B}^n_{e,e}:=b^{x^e_2,\,x^e_1}$ and $\widehat{B}^n_{e,\tilde{e}}:=b^{x^e_1,\,x^{\widetilde{e}}_1}$ for $\tilde{e}\neq e$ in $E(v)$ are non-negative numbers and
		\begin{equation}\label{Cond_betav}
			4\,\sum_{e\in E(v)}\sum_{\tilde{e}\in{E(v)}}\frac{\widehat{B}^n_{e,\tilde{e}}\,M^{\tilde{e}}}{L^e} \to \hat{\beta}(v).
		\end{equation}
		%where $z \in \tilde{e}^n,\, w \in e^n$ are unique elements such that $z\sim w$ and $e\neq \tilde{e}$.
	\end{enumerate}
	Then 
	the  processes $(u_t^n)_{t\geq 0}$ in \eqref{approx_den} converge, in distribution in $D([0,\infty),\,\mathcal{C}_{[0,1]}(\Gamma))$, to a continuous $\mathcal{C}_{[0,1]}(\Gamma)$-valued process $(u_t)_{t\geq 0}$ which is the weak solution to the stochastic partial differential equation \eqref{fkpp1}
	with  $u_0=f_0$.
\end{theorem}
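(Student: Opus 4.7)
The plan is to follow the standard martingale problem route for convergence of interacting particle systems, but with careful attention paid to the vertex terms that give rise to the boundary condition in \eqref{fkpp1}. First I would write the approximate density $u^n$ as a semimartingale. Fix a test function $\phi \in C^\infty_c(\Gamma)$ (not required to satisfy the gluing condition \eqref{glue}), and consider $\langle u^n_t,\phi\rangle_n := \sum_{e} \frac{1}{M^e L^e}\sum_{x \in e^n} u^n_t(x)\phi(x)$, the natural discrete analogue of $\int_\Gamma u\,\phi\, dm$. By Dynkin's formula applied to the generator of the BVM, one obtains
\begin{equation*}
\langle u^n_t,\phi\rangle_n = \langle u^n_0,\phi\rangle_n + \int_0^t \mathcal{A}^n \phi(u^n_s)\,ds + \int_0^t \mathcal{B}^n\phi(u^n_s)\,ds + M^{n,\phi}_t,
\end{equation*}
where $\mathcal{A}^n$ collects the contributions from the symmetric voter rates $a^{z,w}$, $\mathcal{B}^n$ collects the contributions from the biased rates $b^{z,w}$, and $M^{n,\phi}$ is a purely discontinuous martingale.

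The heart of the argument is to identify the limits of $\mathcal{A}^n\phi$, $\mathcal{B}^n\phi$ and the predictable quadratic variation of $M^{n,\phi}$. Using scaling (c) together with Assumption \ref{A:C}, a summation-by-parts on each edge rewrites $\mathcal{A}^n\phi(u^n)$ as an interior term converging to $\int_\Gamma \alpha_e \,u\,\Delta \phi \,dm$ plus a \emph{vertex remainder} of the form $\sum_{v \in V} u^n(v)\,(\nabla_{out}\phi\cdot[\alpha])(v) + o(1)$ (this is where condition \eqref{dxv} and Assumption \ref{A:C} are used, as $\Theta(\alpha_e,\alpha_{\tilde e})$ interpolates the diffusivity across a vertex). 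Under scalings (d) and (e) the bias generator decomposes similarly: the interior contribution converges to $\int_\Gamma \beta_e\, u(1-u)\phi\,dm$, while the contribution from jumps involving a vertex-adjacent deme converges to $\sum_{v \in V}\hat\beta(v)\,u(v)(1-u(v))\,\phi(v)$ thanks to \eqref{Cond_betav}. Finally, a direct computation of jump sizes together with scaling (b) shows that $\langle M^{n,\phi}\rangle_t \to \int_0^t\int_\Gamma \gamma_e\, u_s(1-u_s)\,\phi^2\,dm\,ds$. Together, these identify any limit point as a weak solution to \eqref{fkpp1} in the sense of \eqref{E:WeakSol}.

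Next I would establish tightness of $\{u^n\}$ in $D([0,\infty),C_{[0,1]}(\Gamma))$. Aldous-Rebolledo tightness of $\langle u^n_\cdot,\phi\rangle_n$ follows routinely from the uniform boundedness of $u^n \in [0,1]$ and the uniform-in-$n$ bounds on $\mathcal A^n\phi$, $\mathcal B^n\phi$ and $\langle M^{n,\phi}\rangle$ just established. To lift this to tightness in the compact-uniform metric \eqref{normC}, I would represent $u^n_t$ via its duality with the discrete semigroup of a symmetric random walk on $\Gamma^n$ whose generator is $\mathcal A^n$, and apply the uniform heat kernel estimates of Theorems \ref{T:RWHK}–\ref{T:DiffusionHK} to derive an equicontinuity modulus (in $x$) for $u^n$, together with the local CLT to pass the semigroup to the limit. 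The linear interpolation extending $u^n$ from $\Gamma^n$ to $\Gamma$ then inherits the same modulus. Continuity of the limit paths follows from the fact that each jump of $\langle u^n,\phi\rangle_n$ is of order $1/(M^e L^e)\to 0$.

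The last step is weak uniqueness of \eqref{fkpp1}: any subsequential limit $u$ is a weak solution to \eqref{fkpp1}, and by the Wright-Fisher duality of Lemma \ref{L:WFdual} (together with Assumption \ref{A:Coe_b}) the law of $u$ is determined by $u_0 = f_0$, which together with tightness gives the claimed convergence in distribution. The main obstacle I expect is the careful treatment of the vertex remainder term in the convergence of $\mathcal A^n\phi$: because the test function $\phi$ need not obey the gluing condition \eqref{glue}, and because neighbors of a vertex live on different edges with potentially different $\alpha_e$, one must combine Assumption \ref{A:C} (which supplies the correct effective diffusivity across the vertex), the spacing condition \eqref{dxv}, and the continuity of $u^n$ near $v$ (obtained from the heat kernel regularity) to show that the vertex term converges to $u(v)(\nabla_{out}\phi\cdot[\alpha])(v)$ and pairs correctly against the bias-generated vertex term $\hat\beta(v) u(1-u)(v)$ to produce the boundary condition in the weak formulation of \eqref{fkpp1}.
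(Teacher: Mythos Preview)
Your outline follows the same three-step route as the paper (approximate martingale problem, tightness in $D([0,\infty),C_{[0,1]}(\Gamma))$, weak uniqueness via Lemma~\ref{L:WFdual}), so the overall strategy is correct. There are, however, two points where your plan diverges from or underspecifies the paper's argument.

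First, the paper deliberately restricts to test functions $\phi$ satisfying the gluing condition \eqref{glue_phi}. Under that restriction the vertex remainder $\sum_e \alpha_e\,u^n(x^e_2)\,\nabla_L\phi(x^e_1)$ produced by summation by parts (cf.\ \eqref{GlueT0}--\eqref{LapBound}) vanishes in the limit, and the only surviving vertex contribution is the $\hat\beta$-term coming from the bias rates near $v$. Your proposal to keep general $\phi$ and retain the term $u(v)(\nabla_{out}\phi\cdot[\alpha])(v)$ is legitimate, but those two vertex terms do not ``pair'' or cancel: they are independent contributions to a more general weak formulation, and you would still need to specialize to gluing-satisfying $\phi$ to land in Definition~\ref{Def:WeakSol}. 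So nothing is gained by working with general $\phi$, and the ``pairing'' description is misleading.

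Second, and more substantively, your tightness sketch skips the step the paper flags as the new technical challenge. Aldous--Rebolledo gives tightness only in $D([0,T],\mathcal M(\Gamma))$, which the paper explicitly notes is insufficient for identifying the limit. The paper instead takes $\phi_s=p^n(t-s,\cdot,g)$ as test function to obtain a Green's function representation \eqref{E:Green_n0}--\eqref{E:Green_n2} for $u^n_t(g)$. The crucial point is that this representation contains several new boundary terms ($T^e,U^e,V^e$ and the $\partial_s\phi_s(x_1^e)$ term in \eqref{E:Green_n0}) that are individually of order one; only the algebraic cancellation \eqref{CancelG}, which relies on the symmetry $C^n_{e,\tilde e}=C^n_{\tilde e,e}$ and the specific form of $\mathcal L_n$ at $x_1^e$, reduces the representation to \eqref{E:Green_n_sim}--\eqref{E:Green_n2_sim}. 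Without this cancellation the moment bound of Lemma~\ref{L:Moment_hatu} (and hence equicontinuity in $x$) does not go through. Your phrase ``represent $u^n_t$ via its duality with the discrete semigroup'' is the right idea, but you should be aware that making the boundary terms disappear from that representation is where the work lies.
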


%\bigskip
We explain  this result  in the remarks below.

\begin{remark}[Identifying SPDE from microscopic rules]\rm\label{Algo}
	The significance of Theorem \ref{T} lies in the connection it establishes between the microscopic BVM  and the new macroscopic SPDE model that have fewer parameters. For example, the BVM is intractable to analyze or simulate when $L^e$ or $M^e$ is large, but with  Theorem \ref{T} one can  take advantage of a new regularity (described by the SPDE, which is robust against the size of the particle system) that emerges. 
	
	%One should consider the coefficients of this SPDE to be unknown, but can be extracted from microscopic rules fo the BVM, according to Theorem \ref{T}, as follows. 
	
	To compare these two models we must relate microscale and macroscale parameters.
	The micro-parameters are $\{L^{e,\tilde{e}}\}$, $\{M^e\}$, $\{a^{z,w}\}$ and $\{b^{z,w}\}$. The macro-parameters can be found from micro-parameters as follows. 
	\begin{enumerate}
		\item[(i)] $\quad\alpha_e = 2\lim_{n\to\infty}C^n_{e,e}/L^e$, 
		\item[(ii)] $\quad\gamma_e=4\lim_{n\to\infty}\alpha_eL^e/M^e$, 
		\item[(iii)] 
		$\quad\beta_e= 2\lim_{n\to\infty}B^n_e \,M^e$, 
		\item[(iv)] $\hat{\beta}(v)= 4\lim_{n\to\infty} \sum_{e\in E(v)}\sum_{\tilde{e}\in{E(v)}}\widehat{B}^n_{e,\tilde{e}}\,M^{\tilde{e}}/L^e$.
	\end{enumerate}
	These are generalizations of \eqref{scaling1}.
	Conversely, given $\alpha,\beta,\hat{\beta}$ and $\gamma$ satisfying hypothesis of Theorem \ref{T}, there exist micro-parameters such that (i)-(iv) hold. 
	
	Using this connection, one can either obtain macro-parameters from microscopic (e.g. cell-level) measurements and experimental set up, or test hypothesis of microscopic interactions by using population-level measurements,  or even perform model validation at both scales.
	%The computation of $\alpha_e$ in (i) follows from \eqref{Cond_C} when we put $e=\tilde{e}$.
	%An interesting point is that our scheme can take care of the case when a solution is simultaneously deterministic ($\gamma_e=0$) on edge $e$ and noisy ($\gamma_{\tilde{e}}>0$) on another edge $\tilde{e}$. 
	Different edges can have different $L^e_n$ or $M^e_n$, allowing the flexibility to model situations in which cells of different types and experimental configurations are situated on different edges \cite{su2013effect}. This also enables 
	us to take care of the case when a solution is simultaneously deterministic ($\gamma_e=0$) on edge $e$ and noisy ($\gamma_{\tilde{e}}>0$) on another edge $\tilde{e}$. 
\end{remark}

\begin{remark}[Random walk approximations]\rm \label{Rk:RW}
	The numbers $\{C^n_{e\tilde{e}}\}$ in Condition (c) of Theorem \ref{T}  arise naturally as the symmetric conductances of a random walk $X^n$ which, under \eqref{Cond_C}, converges in distribution to the $m$-symmetric diffusion $X$  with $\ell=1$ and $a=\alpha_e$ on $\mathring{e}$. More precisely, 
	%Condition \eqref{Cond_C} might look strange at first site, but it guarantees weak convergence of the random walks. 
	define the measure $m_n$ on $\Gamma^n$ by
	$m_n(x):=\frac{1}{L^e}$ whenever $x\in e^n$.
	Let $X^n:=\{X_t^n\}_{t\geq 0}$ be the continuous time random walk (CTRW) on $\Gamma^n:=\cup_e e^n$ associated with the Dirichlet form $\mathcal E^n(f,g)$ on $L^2(\Gamma^n,m_n)$, where 
	\begin{equation}\label{RW_DF}
		\mathcal E^n(f,g):=\frac{1}{2}\sum_{x,y\in \Gamma^n}(f(x)-f(y))(g(x)-g(y))\, C^n_{xy} \qquad\text{and}
	\end{equation} 
	\begin{equation}\label{RW_sym}
		C^n_{xy}:=
		\begin{cases}
				C^n_{e,\tilde{e}} \quad \text{if }x\in e^n, \,y\in\tilde{e}^n \text{ and }x\sim y\\
				0\qquad  \text{ if }x \text{ and }y \text{ are not adjacent in }\Gamma^n.		
		\end{cases}
	\end{equation}

	%if $x\in e^n$, $y\in\tilde{e}$ and $x\sim y$.
	%and the symmetric conductance $\{C^n_{xy}\}_{x, y\in \Gamma^n}$ is defined by
	%\begin{eqnarray*}C^n_{xy}:=
	%	\begin{cases}
	%		\alpha_e L^e/2, &x,y\in e_n,\,y\sim x\cr
	%		\sqrt{\alpha_e \alpha_{\tilde{e}}}/[2 \,d(x,y)] := C^n_{e\tilde{e}}, &x\in e_n,\,y\in \tilde{e}_n,\,y\sim x\cr
	%		0,& \text{otherwise}.
	%	\end{cases}
	%\end{eqnarray*}
	Observe that $X^n$ is $m_n$-symmetric  since $C^n_{xy}=C^n_{yx}$ for all $x,y\in \Gamma^n$. 
	Under \eqref{Cond_C}, we  have $\mathcal E^n(f,g)\to \mathcal E(f,g)$ for $f,g\in \mathcal{C}^{\infty}_{c}(\Gamma)$.
	With extra work one can establish
	the weak convergence $X^n\to X$, as precisely stated in Lemma 
	\ref{Invar}. 
	%The term $\sqrt{\alpha_e\,\alpha_{\tilde{e}}}$ in \eqref{Cond_C} is just one convenient choice. All results in this paper remain valid if this term is replaced by any symmetric function $\Theta(\alpha_e\,\alpha_{\tilde{e}})$ such that $\Theta(a,a)=a$ and that $\Theta(a,b)=\Theta(b,a)$ is bounded between $a$ and $b$. For example, $\Theta(a,b)$ can be any power mean $\left(\frac{a^p+b^p}{2}\right)^{1/p}$ with exponent $p\neq 0$.
\end{remark}

\begin{remark}[Uniform approximation]\rm \label{eps_n}
	Condition (a) further implies a local central limit theorem (local CLT) and a uniform Holder continuity for $X^n$. The latter results,  established in Theorems \ref{T:LCLT_CTRW} and \ref{T:DiffusionHK} in Section \ref{S:DHK}, will be used to obtain tightness of $\{u^n\}$ (Proposition \ref{T:Tight}).
	When $1/L^e \to 0$ {\it uniformly} for all $e\in E$ [for instance when Condition (a) is in force], we shall  fix an edge $e*\,\in E$ and take $\eps_n:=1/L^{e*}$, a representative rate at which every $1/L^e$ tends to zero. 
	All $\{C^n_{x,y}\}_{x\sim y}$ are then of the same order $O(1/\eps_n)$ as $n\to\infty$ by \eqref{Cond_C}. 
\end{remark}

\begin{remark}[Generator $\mathcal{L}_{n}$]\rm \label{Rk:RWGen}
	The transition rate of $X^n$ from $x$ to $y$ is 
	$$\lambda(x,y)=C^n_{xy}/m_n(x):=C^n_{e\tilde{e}}\,L^e \quad \text{if }x\in e_n,\,y\in \tilde{e}_n,\,y\sim x.$$  
	Condition \eqref{Cond_C} implies that $|\lambda(x,y)- \alpha_e (L^e)^2|\to 0$ uniformly for $x,y\in e_n$ with $y\sim x$ and for all $e$. Hence the generator $\mathcal{L}_{n}$ of $X^n_t$ can be approximated by
	\begin{equation}\label{Generator_n1}
		\mathcal{L}_{n}F(x) \approx \alpha_e\Delta_{L^e}F(x)
	\end{equation} 	
	whenever $x\in e^n\setminus \{x^e_1\}$,
	where $\Delta_{L^e}$ is the discrete Laplacian in \eqref{dLaplacian}, and
	\begin{equation}\label{Generator_n2}
		\mathcal{L}_{n}F(x^e_1) \approx\,\alpha_e(L^e)^2\big(F(x^{e}_2) -F(x^e_1)\big)\,+\,\sum_{\tilde{e}\in E(v):\,\tilde{e}\neq e}\big(F(x^{\tilde{e}}_1) -F(x^e_1)\big)\,C_{e,\tilde{e}}\,L^e
	\end{equation}
	whenever $x^e_1$ is the element in $e_n$ which is adjacent to a vertex  and $x^e_2$  is the element in $e_n$ which is adjacent to $x^e_1$. The approximations $\approx$ in \eqref{Generator_n1} and \eqref{Generator_n2} can be quantified by using Condition \eqref{Cond_C}: the absolute difference between the left and the right is at most
	$o(\eps_n)\,\|F\|_{\infty}$ where  $o(\eps_n)$ represents a term independent of $F$  and which tends to 0 {\it uniformly} for all $x\in \Gamma^n$ faster than $\eps_n$. 
	\begin{comment}
	\lambda(x,y)=C^n_{xy}/m_n(x)=
	\begin{cases}
	\alpha_e (L^e)^2/2, &x,y\in e_n,\,y\sim x\cr
	C^n_{e\tilde{e}}L^e, &x\in e_n,\,y\in \tilde{e}_n,\,y\sim x\cr
	0,&otherwise.
	\end{cases}
	\end{comment}
\end{remark}

\begin{remark}[Local growth at $v\in V$]\rm \label{Rk:Cond_betav}
	Results here for the simpler case $\hat{\beta}=0$ (no extra birth on $V$) are already new.
	Condition \eqref{Cond_betav} is crucially needed in (and only in) \eqref{Cond_betav2}. 
	It implies that, in order to have nontrivial boundary conditions $\hat{\beta}\neq 0$, the bias rates $b^{z,w}$ near vertices need to be of order at least $L^e/M^{\tilde{e}}$ which is typically higher than those in the interior of the edges. For example, $\hat{\beta}=0$ if all $\{b^{z,w}\}$ are of the same order in $n$ and so do all $\{M^e\}$.
	To try to give further interpretation, we  suppose for simplicity that $L^e$ are the same and that $\gamma_e>0$ for all $e\in E$. Condition (b) and \eqref{Cond_betav} roughly say that 
	\begin{equation*}
		\hat{\beta}(v)\approx
		4\sum_{e\in E(v)}\sum_{\tilde{e}\in{E(v)}}\widehat{B}^n_{e,\tilde{e}}\frac{\alpha_{\tilde{e}}}{{\gamma_{\tilde{e}}}}=
		4\sum_{\tilde{e}\in{E(v)}} \Big(\sum_{e\in E(v)}\widehat{B}^n_{e,\tilde{e}}\Big)\frac{\alpha_{\tilde{e}}}{{\gamma_{\tilde{e}}}},
	\end{equation*} 
	where $\sum_{e\in E(v)}\widehat{B}^n_{e,\tilde{e}}$ can be interpreted as a local growth at $v$ contributed by $\tilde{e}$. 
\end{remark}

\begin{remark}\rm
	When $\Gamma=\RR$, Theorem \ref{T} reduces to \cite[Theorem 1]{MR3582808}. Under the same scalings, we can obtain the corresponding generalization of \cite[Theorem 4]{MR3582808}. 
	%More generally, these scalings connect the study of the genealogies of expanding populations on $\Gamma$ in two different scales. 	
\end{remark}

\subsection{Interacting SDE as a numerical scheme}

Theorem \ref{T} enables us to derive an SPDE which captures the macroscopic evolution of the density of particles in the BVM. See Remark \ref{Algo}. So when the number of particles are too large, one can simulate the more robust SPDE instead of the stochastic particle system.  The next question is then: {\it how to simulate the SPDE?} $\,$
It is known  \cite{MR948717, MR1397705, MR1705602, MR2014157, MR2550362, MR3274891} that SPDE can arise as the continuum scaling limit of interacting SDEs. 
These SDEs provide a numerical scheme for the solutions of the SPDE and also the foundation for stochastic simulation algorithms (Gillespie algorithms \cite{gillespie1976general, gillespie1977exact}). 
%On $\RR$ such a family of diffusions is easy to write down, but on graphs the interactions near vertex singularities have to be chosen carefully. 

%\medskip

In this section, we construct a system of interacting SDEs that offer a semi-discrete approximation to SPDE \eqref{fkpp1}, where ``semi-discrete" refers to the fact that the graph $\Gamma$ is discretized into demes but populations in the demes are infinite. 
% though a rigorous reference seems missing in the literature. 
%On $\RR^1$ such a family of diffusions are easy to write down, but on graphs the interactions near a vertex has to be carefully chosen.
% A rigorous justification of convergence, although seems to follow from a standard $L^2$ estimate as in Section 3 of Muller \cite{MR1149348}, is missing in the literature.
%Uniform estimates of the transition kernel $p^n(t,x,y)$ again play a crucial role.
Our scheme  utilizes  the random walk $X^n$ defined by
\eqref{RW_DF}-\eqref{RW_sym}.
\begin{enumerate}[leftmargin=*]
	\item[1.] {\bf Specify step size. } Fix a sequence $\{h_n\}_{n\geq 1}\subset (0,\infty)$   which tends to 0.
	\item[2.] {\bf Discretize $\Gamma$. } Construct $\Gamma^n=\cup_e e^n$ as in Section \ref{S:BVM},  but now with $\frac{1}{L^e_n}=h_n$ for all $e\in E$. For  $x\in e^n$ that is not adjacent to any vertex in $V$, we 
	let $I_{x}$ be the connected open interval $\{y\in \Gamma:\,d(y,x)<\frac{1}{2L^e}\}$.
	\item[3.] {\bf Independent Brownian motions. } For all  $x\in e^n$ that is not adjacent to any vertex in $V$, we let $B_x=(B_x(t))_{t\geq 0}$ be a standard  Brownian motion and that these Brownian motions $\{B_{x}\}_{x}$ are independent.
	\item[4.] {\bf Interacting SDE $\{U_x:=U^n_x\}_{x\in \Gamma^n}$. } 		Consider the system of  SDEs	
	\begin{equation}\label{U1}
	dU_x(t)  = \Big[\mathcal{L}_n U_x + \beta_e\, U_x(1-U_x)\Big] \, dt + \sqrt{\gamma_e\,L^e\,U_x(1-U_x)} \, dB_{x}(t)
	\end{equation}
	whenever $x\in e_n$ is not adjacent to any vertex in $V$, and 
	\begin{align}\label{U2}
	dU_x(t)  =& \left[\mathcal{L}_n U_x +  L^e\,\frac{\hat{\beta}(v)}{deg(v)}\, U_x(1-U_x) \right] \, dt
	\end{align}
	whenever $x\in e_n$ is adjacent to $v\in V$, where  $\mathcal{L}_n$ is the generator of the CTRW $X^n$ defined in Remark \ref{Rk:RW}.
\end{enumerate}

Our second result says that  the interacting SDEs \eqref{U1}-\eqref{U2} and the SPDE \eqref{fkpp1} are close  at least in distribution.
Write $U^n_x=\big(U^n_x(t)\big)_{t\in\geq 0}$ and let  $\{U^n_x\}_{x\in \Gamma^n}$  be a weak solution of \eqref{U1}-\eqref{U2}  for the It\^o SDE.
We further interpolate over space to define $U^n_x$ for all $x\in \Gamma$, as was done for \eqref{approx_den}. Let $\mathcal{C}([0,\infty),\,\mathcal{C}_{[0,1]}(\Gamma))$ be the space of continuous functions from $[0,\infty)$ to $\mathcal{C}_{[0,1]}(\Gamma)$, equipped with the  topology of local uniform convergence.
%Then  $U^n_{\cdot}(t)\in \mathcal{C}_{[0,1]}(\Gamma)$ for all $t\geq 0$.

%in the sense that the worst $L^2$ error over space-time domain $\Gamma\times [0,T]$ tends to zero as $n\to\infty$.

\begin{theorem}\label{prop:SDE}
	Let $(\Gamma,\,d)$ be a metric graph satisfying Assumption \ref{A:graph}. Let $\alpha,\beta$ and $\gamma$ be piecewise constant functions as in Example \ref{Eg:fkpp}, and  $\hat{\beta}:V\to [0,\infty)$ be a non-negative bounded function on $V$.	
	Suppose Condition (a) of Theorem \ref{T} and Assumption \ref{A:C} hold. Let $u(t,x)$ be the weak solution of SPDE \eqref{fkpp1} with initial condition $u_0\in \mathcal{C}_{[0,1]}(\Gamma)$, and $\{U^n_x\}_{x\in \Gamma^n}$ be a weak solution to  \eqref{U1}-\eqref{U2} with initial condition $\{U^n_{x}(0)=u_0(x)\}_{x\in \Gamma^n}$. Let $\{U^n_x\}_{x\in \Gamma}$  be the linear interpolation described above.
	Then as $n\to\infty$,
the  processes $U^n$ converge to $u$ in distribution in $\mathcal{C}([0,\infty),\,\mathcal{C}_{[0,1]}(\Gamma))$.
\end{theorem}

The proof of Theorem \ref{prop:SDE} will be given in Section \ref{S:SDE}.

\begin{remark}[Coupling]\rm
	The above construction gives a coupling of the interacting SDEs \eqref{U1}-\eqref{U2} and the SPDE \eqref{fkpp1} if we choose the Brownian motions to be
	\begin{equation}\label{BM_white}
	B_{x}(t):=	\sqrt{L^e}\,\dot{W}([0,t]\times I_{x}),
	\end{equation}  
	where  $\dot{W}$ is the same white noise in \eqref{fkpp1}, and if we can establish strong existence of solution for \eqref{U1}-\eqref{U2}. We do not know whether $\sup_{x\in \Gamma^n}\mathbb{E}|U^n_{x}(t)-u(t,x)|^p$ converge to zero for some $p>0$, under 
	Standard $L^p$ estimate as in Section 3 of Mueller \cite{MR1149348} does not seem to work,  due to the non-Lipschitz square root terms for $U^n$ and $u$. 
\end{remark}

\subsection{Duality for stochastic FKPP with inhomogeneous coefficients}\label{SS:DualLem}

Before turning to the proofs of Theorem \ref{T} and Theorem \ref{prop:SDE}, 
we settle the well-posedness of SPDE \eqref{fkpp1}. 
While existence of weak solution follows from tightness (Proposition \ref{T:Tight}) and \eqref{subseqLimit} for free, weak uniqueness requires a separate argument. We will establish a duality relation that implies weak uniqueness.

Duality between the standard stochastic FKPP and a branching coalescing Brownian motion is a known result due to Shiga \cite{MR948717} and \cite{MR1813840}. 
%A detailed proof can be found in \cite[Section 8.1]{MR3582808} in which the authors follow the approach of Athreya and Tribe  \cite{MR1813840}. 
Here we generalize this result to stochastic FKPP on a metric graph $\Gamma$, with inhomogeneous coefficients and nontrivial boundary conditions. 
%This result does not seem to be recorded in the literature, even for the case $\Gamma=\RR$.
%Suppose $\alpha,\,\beta,\,\gamma$ are non-negative bounded measurable functions on $\Gamma$ having bounded continuous derivatives on the interior $\Gamma\setminus V$ and $\alpha$ is bounded from below by a positive number. 

%Let $\mathcal{B}_{[0,1]}(\Gamma)$ be the space of Borel measurable functions on $\Gamma$ bounded between 0 and 1.
\begin{lemma}[Duality]\label{L:WFdual}
	Suppose that Assumptions \ref{A:graph}, \ref{A:Coe_a} and \ref{A:Coe_b} hold.
	Let $u$ be a weak solution of the SPDE \eqref{fkpp0} with initial condition $u_0\in \mathcal{C}_{[0,1]}(\Gamma)$, and $\{x_i(t):\, 1\leq i \leq n(t)\}$ be the positions at time $t$ of 
	a system of particles performing branching coalescing $\mathcal{E}$-diffusions on $\Gamma$, in which  % particles move according to independent $\mathcal{L}$-diffusions on $\Gamma$, 
	\begin{itemize}
		\item branching for a particle $X_t$ occurs at rate $\beta(X_t) dt+\hat{\beta}(X_t)dL^V_t$, where $L^{V}_t$ is the local time on the set of vertices $V$ of $X$; i.e. the particle splits into two when the additive functional
		\begin{equation}\label{A^V}
			\int_0^t\beta(X_s) ds+\hat{\beta}(X_s)dL^V_s
		\end{equation}	
		exceeds an independent mean one exponential random variable. % which is independent of the particle motions.	
		\item two particles $X_t,\,Y_t$ coalesce at rate $\frac{\gamma(X_t)}{\ell(X_t)}\,dL^{(X,Y)}_t$, where $L^{(X,Y)}_t$ is the local time of $(X_t,Y_t)$ on the diagonal $\{(x,x):\,x\in \Gamma\}$; i.e. the two particles become one  when the  additive functional 
		\begin{equation}\label{A^XY}
			\int_0^t\frac{\gamma(X_s)}{\ell(X_s)}\,dL^{(X,Y)}_s
		\end{equation}
		exceeds  an independent mean one exponential random variable. 
	\end{itemize}
	Then we have the duality formula
	\begin{equation}\label{WFdual}
		\E \prod_{i=1}^{n(0)} \big(1-u_t(x_i(0))\big) =  \E \prod_{i=1}^{n(t)} \big(1-u_0(x_i(t))\big)\quad \text{for all }t\geq 0.
	\end{equation}
\end{lemma}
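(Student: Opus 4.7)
The plan is to carry out Shiga's moment-duality argument \cite{MR948717}, adapted to the graph setting with spatially inhomogeneous coefficients and the nontrivial gluing boundary condition $\nabla_{out}u\cdot[\alpha]=-\hat{\beta}u(1-u)$. On a common probability space carrying both the SPDE solution $u$ and the branching-coalescing particle system (the latter built from the $\mathcal{E}$-diffusion together with standard Poisson clocks driven by the additive functionals in \eqref{A^V}--\eqref{A^XY}), I define
$$V(s):=\prod_{i=1}^{n(s)}\bigl(1-u_{t-s}(x_i(s))\bigr),\qquad s\in[0,t],$$
so that \eqref{WFdual} is the identity $\E V(0)=\E V(t)$. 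The goal is to show that $s\mapsto \E V(s)$ is constant on $[0,t]$.

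First I would apply It\^o's formula jointly to $u_{t-s}$ (which, as a function of $s$, evolves backwards under $-\mathcal{L}u-\beta u(1-u)-\sqrt{\gamma u(1-u)}\dot{W}$) and to the particle paths $\{x_i(s)\}$. The deterministic drift $-\mathcal{L}u(x_i)\,ds$ from reversing time cancels exactly with the $+\mathcal{L}u(x_i)\,ds$ produced by applying the $\mathcal{E}$-diffusion generator in the spatial variable; this is the classical cancellation powering moment duality. Three predictable contributions remain from the SPDE side: the interior drift $\beta(x_i)u(1-u)(x_i)\,ds$, aggregating to $\sum_i\beta(x_i)u(x_i)V\,ds$; the vertex local-time term $\hat{\beta}(x_i)u(1-u)(x_i)\,dL^V_s(x_i)$ produced by the gluing condition upon integration by parts against the $\mathcal{E}$-diffusion; and the cross-variation $d\langle 1-u_{t-s}(x_i),1-u_{t-s}(x_j)\rangle_s$ which, via the white-noise covariance and the appropriate time change by collision local time, equals $(\gamma(x_i)/\alpha(x_i))\,u(x_i)(1-u(x_i))\,dL^{(x_i,x_j)}_s$. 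Each of these is exactly cancelled by the corresponding event in the dual system: interior branching of $x_i$ at rate $\beta(x_i)$ changes $V$ by $-u(x_i)V$; vertex branching at rate $\hat{\beta}(x_i)\,dL^V_s$ does the same; and coalescence $(x_i,x_j)\to x_i$ at rate $(\gamma/\alpha)(x_i)\,dL^{(x_i,x_j)}_s$ changes $V$ by $+u(x_i)(1-u(x_i))\prod_{k\ne i,j}(1-u(x_k))$. Summing, all predictable parts of $dV$ cancel, so $V$ is a mean-zero local martingale. Since $0\le V\le 1$, it is a true martingale, giving $\E V(s)\equiv \E V(0)$.

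The main obstacle is rigor, since $u$ is only a weak (and only $L^2$) solution of \eqref{fkpp0}, so that $u_{t-s}(x_i(s))$ is not defined pointwise and the It\^o calculation above cannot be performed directly. I would resolve this by mollification: replace $u_{t-s}(x_i(s))$ by $\int u_{t-s}(y)\,p(\epsilon,x_i(s),y)\,\nu(dy)$ using the heat kernel $p$ from \eqref{Def:p}, apply It\^o in the weak/mild form \eqref{E:WeakSol}--\eqref{E:MildSol} (legitimate because $y\mapsto p(\epsilon,x,y)$ is an admissible test function), and then pass to the limit $\epsilon\downarrow 0$. The uniform heat-kernel estimates of Theorem \ref{T:DiffusionHK} together with the bound $u\in[0,1]$ make this limit tractable: drift and boundary terms converge by dominated convergence against Gaussian upper bounds, while the martingale covariance $\int\gamma u(1-u)\,p(\epsilon,x_i,\cdot)\,p(\epsilon,x_j,\cdot)\,d\nu\,ds$ concentrates on the diagonal and produces the collision local time $L^{(x_i,x_j)}$ with precisely the normalization $\gamma/\alpha$ dictated by the $\mathcal{E}$-diffusion's quadratic variation; the vertex local time $L^V$ is obtained analogously from the gluing condition. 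Once $s\mapsto\E V(s)$ is constant, evaluating at $s=0$ and $s=t$ yields \eqref{WFdual}. Since the functionals $\{\E\prod_i(1-u_t(x_i))\}$ separate probability measures on $C_{[0,1]}(\Gamma)$, this also delivers the weak uniqueness of \eqref{fkpp0} alluded to before the lemma.
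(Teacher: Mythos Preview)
Your proposal is correct and follows precisely the standard Shiga-style moment-duality argument that the paper invokes: the paper does not give a proof but states that ``a proof of \eqref{WFdual} follows by a routine modification of Section~8.1 of \cite{MR3582808} and is therefore omitted,'' and your outline---defining $V(s)=\prod_i(1-u_{t-s}(x_i(s)))$, exhibiting the three predictable cancellations (generator, branching in the interior and at vertices via $L^V$, and coalescence via the collision local time $L^{(X,Y)}$ with the $\gamma/\alpha$ normalization), and handling regularity by heat-kernel mollification---is exactly that routine modification.
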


\medskip

%In particular, if $\gamma$ is constantly zero then there is no coalescence and so the deterministic PDE on $\Gamma$ is dual to a purely branching system. 

The local times $L^{V}$ and $L^{(X,Y)}$ are defined as the positive continuous additive functionals (PCAF) corresponding to, respectively, the Revuz measures (Chapter 4 of \cite{MR2849840}) 
$\sum_{v\in V}\delta_v$ on $\Gamma$ and $m_*$ on $\Gamma\times \Gamma$, where $m_*$ is supported on the diagonal defined by $m(A)=m_*\{(x,x):x\in A\}$. With this new dual process, a proof of \eqref{WFdual} follows by a modification of Section 8.1 of \cite{MR3582808} and heat kernel estimates of $X$ (Theorem \ref{T:DiffusionHK}); more details of the proof are given in the Appendix.

%and is given later in Section \ref{S:Dual}. 

Lemma \ref{L:WFdual} is useful to obtaining distributional properties of \eqref{fkpp0}. In particular it yields the following weak uniqueness.
\begin{lemma}[Weak uniqueness]\label{WellposeFKPP}
	Suppose  Assumptions \ref{A:graph}, \ref{A:Coe_a} and \ref{A:Coe_b} hold. Then there exists a unique weak solution to SPDE \eqref{fkpp0}. In particular, \eqref{fkpp1} has a unique weak solution.
\end{lemma}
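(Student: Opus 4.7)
\medskip

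\textbf{Plan.} Existence and uniqueness will be handled separately. For existence, the weak solution of \eqref{fkpp1} is produced by Theorem \ref{T} itself: given $u_0 \in C_{[0,1]}(\Gamma)$, approximate it by initial data $u^n_0$ of a suitable BVM on the discretized graph $\Gamma^n$, and Theorem \ref{T} delivers a limit point solving \eqref{fkpp1} weakly. For the more general coefficients in \eqref{fkpp0} (non-constant $\alpha,\ell$), one can replace the BVM by a direct martingale-problem construction: build mild solutions through the Green's function of $\mathcal{L}$, use the uniform heat kernel estimates of Theorem \ref{T:DiffusionHK} to obtain tightness of truncated/Picard approximations in $C([0,T],C_{[0,1]}(\Gamma))$, and extract a limit point whose associated martingales vanish against test functions satisfying the gluing condition \eqref{glue}. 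The key observation is that the solution stays in $[0,1]$, so the non-Lipschitz coefficients $\sqrt{\gamma u(1-u)}$ and $u(1-u)$ are harmless for existence.

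\medskip

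The heart of the proof is uniqueness, which follows from the duality formula \eqref{WFdual}. Let $u$ and $\widetilde u$ be two weak solutions of \eqref{fkpp0} with the same deterministic initial condition $u_0 \in C_{[0,1]}(\Gamma)$, defined possibly on different probability spaces. Fix $t \geq 0$, $k \geq 1$, and $x_1,\ldots,x_k \in \Gamma$. Applying Lemma~\ref{L:WFdual} to each of $u$ and $\widetilde u$ with the same initial configuration $(x_1,\ldots,x_k)$ for the dual branching--coalescing $\mathcal{E}$-diffusion yields
\begin{equation*}
    \E\prod_{i=1}^{k}\bigl(1-u_t(x_i)\bigr) \;=\; \E\prod_{i=1}^{n(t)}\bigl(1-u_0(x_i(t))\bigr) \;=\; \E\prod_{i=1}^{k}\bigl(1-\widetilde u_t(x_i)\bigr),
\end{equation*}
since the middle quantity depends only on $u_0$ and on the law of the dual process (which is determined by $\alpha,\beta,\gamma,\hat\beta,\Gamma$, not by the choice of solution). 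Allowing repetitions among the $x_i$'s yields equality of all mixed moments $\E\prod_{j}(1-u_t(y_j))^{m_j} = \E\prod_{j}(1-\widetilde u_t(y_j))^{m_j}$ for every $m_j \in \mathbb{N}$ and distinct $y_j \in \Gamma$. Because $u_t, \widetilde u_t$ are $[0,1]$-valued, the Weierstrass approximation theorem on $[0,1]^k$ (polynomials in $(1-z_1,\ldots,1-z_k)$ span a dense subalgebra of $C([0,1]^k)$) implies that $(u_t(y_1),\ldots,u_t(y_k))$ and $(\widetilde u_t(y_1),\ldots,\widetilde u_t(y_k))$ share the same joint law. Hence $u_t$ and $\widetilde u_t$ have the same finite-dimensional distributions as $C_{[0,1]}(\Gamma)$-valued random elements, and by continuity of $x\mapsto u_t(x)$ this determines the law of $u_t$ on $C_{[0,1]}(\Gamma)$.

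\medskip

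To upgrade from uniqueness of one-time marginals to uniqueness in law on $D([0,\infty),C_{[0,1]}(\Gamma))$, I will invoke the Markov property of weak solutions. A weak solution to \eqref{fkpp0} (formulated as a martingale problem against test functions obeying the gluing condition) is a Markov process in its natural filtration provided the one-dimensional marginals starting from every deterministic initial datum are uniquely determined; this is the standard Stroock--Varadhan-type equivalence adapted to the SPDE setting (see, e.g., the arguments in the appendix of \cite{MR3582808}). Since the duality argument above applies to any starting point $u_0 \in C_{[0,1]}(\Gamma)$, iterating over finitely many times $0 \leq t_1 < \cdots < t_m$ via conditioning gives equality of all finite-dimensional distributions of $(u_{t_j})$ and $(\widetilde u_{t_j})$, and the c\`adl\`ag path regularity then forces equality of the laws on $D([0,\infty),C_{[0,1]}(\Gamma))$.

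\medskip

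\textbf{Main obstacle.} The only nontrivial step is justifying the Markov property and the conditioning argument in the final paragraph, because the dual process sees the vertex set $V$ through the local time $L^V$ in \eqref{A^V} and $L^{(X,Y)}$ in \eqref{A^XY}, whose existence relies on the regularity of the heat kernel near the vertices. I would handle this by verifying that the PCAFs associated with the Revuz measures $\sum_{v\in V}\delta_v$ and $m_*$ are well-defined under Assumption~\ref{A:graph} (which is already embedded in the statement of Lemma~\ref{L:WFdual}), and then appealing to the fact that the coefficients in \eqref{fkpp0} are bounded, so that the martingale problem has unique one-dimensional marginals, making the weak solution a Markov process in the standard sense. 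Everything else is a routine consequence of the duality.
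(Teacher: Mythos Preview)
Your approach matches the paper's, which offers no detailed proof of Lemma~\ref{WellposeFKPP} beyond the remark that the duality formula of Lemma~\ref{L:WFdual} ``gives weak uniqueness''; your uniqueness argument via mixed moments and the Markov property is the standard fleshing-out of that remark and is correct.

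There is, however, one logical subtlety worth flagging in your existence argument. You propose to obtain existence for \eqref{fkpp1} by invoking Theorem~\ref{T}, but in the paper the proof of Theorem~\ref{T} \emph{concludes} by citing Lemma~\ref{WellposeFKPP} (to pass from subsequential convergence to full convergence). Invoking the \emph{statement} of Theorem~\ref{T} here is therefore circular. What you should say instead is that existence for \eqref{fkpp1} follows from the ingredients of the proof of Theorem~\ref{T} that do not depend on Lemma~\ref{WellposeFKPP}: namely Proposition~\ref{T:Tight} (tightness of $\{u^n\}$) together with \eqref{subseqLimit} (any subsequential limit solves \eqref{fkpp1} weakly). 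These two pieces are established independently of Lemma~\ref{WellposeFKPP} and jointly produce a weak solution. With that adjustment the argument is sound, and your separate treatment of \eqref{fkpp0} via a direct mild-solution construction using the heat kernel bounds of Theorem~\ref{T:DiffusionHK} is a reasonable route that the paper does not spell out.
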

%In fact, weak existence for \eqref{fkpp1} follows for free, after we proved tightness in Theorem \ref{T:Tight} and showed that any sub-sequential limit solves \eqref{fkpp1} weakly. The latter is accomplished in Subsection \ref{ss:limitmtg}.

\medskip

\begin{flushleft}
	{\bf\large Acknowledgements. }The author thanks Rick Durrett, Tom Kurtz, Carl Mueller, Edwin Perkins, Pierre Del Moral and John Yin for enlightening discussions. He thanks the referees for their helpful comments that improve the presentation and the accuracy of the results. This research is partially supported by the NSF grants DMS--1804492 and DMS--1149312 and ONR grant N00014-20-1-2411.
\end{flushleft}
%\section*{Acknowledgments}
%The author thanks  Pierre Del Moral (University of New South Wales), Carl Muller (University of Rochester), Sunder Sethuraman (University of Arizona), Tom Kurtz,  Timo Seppalainen and John Yin (University of Wisconsin-Madison) for thoughtful remarks. This research is partially supported by the NSF grant DMS--1149312 (CAREER) to Sebastien Roch 
%and the math department of University of Wisconsin-Madison. 

\bigskip

\noindent
{\bf \Large Proofs. }The rest of the paper is devoted to the proofs of Theorems \ref{T} and  \ref{prop:SDE}. We start with some uniform heat kernel estimates of independent interest.

%\newpage

\clearp 
\section{Uniform heat kernel estimates}\label{S:DHK}

In this Section, we establish some uniform estimates for the transition densities for both the random walks $X^n$ (defined by
\eqref{RW_DF}-\eqref{RW_sym}) and the $\mathcal{E}$-diffusion $X$. The key point here is that the constants involved do not depend on $n$, which in particular implies the local CLT. Besides having independent interest, these results are essential to the proof of tightness in Theorem \ref{T} and of Theorem  \ref{prop:SDE}. 

%Although the results for $p$ follows from the uniform estimates for $p_n$, we present the proof for the diffusion first since it is cleaner. )}

\subsection{Invariance principle}

As pointed out in Remark \ref{Rk:RW}, under \eqref{Cond_C} we have
the following generalization of Donsker's invariance principle. 
Recall that $\nu$ defined in \eqref{Def:nu} is equal to $m$ when $\ell=1$.
\begin{lemma}(Invariance principle)\label{Invar}
	Let $(\Gamma,\,d)$ be a metric graph satisfying Assumption \ref{A:graph} and $\ell, \alpha$ be given in Example \ref{Eg:fkpp}.
	Suppose  Condition (a) of Theorem \ref{T} and Assumption \ref{A:C} hold. 	
	 %$1/L^e \to 0$ uniformly for all $e\in E$ and \eqref{dxv} holds
	Suppose, as $n\to\infty$, the law of $X^n_0$ converges weakly to $\mu$. Then for all $T\in (0,\infty)$
	the random walks $X^n$ converge in distribution in the Skorohod space $D([0,T],\Gamma)$ to $X$, the $m$-symmetric $\mathcal{E}$-diffusion  with initial distribution $\mu$.
\end{lemma}
Lemma \ref{Invar} follows from the more general result \cite[Theorem 1]{MR3630284}.  %See \cite{MR2393994, MR3485385} for discrete approximations for reflected diffusions.
%In the next subsection, we obtain 
%require $L^e\to \infty$ uniformly for all $e\in E$. 
The latter gives invariance principles for random walks on random trees based on Dirichlet form methods. 
%while  our sequence of graphs are deterministic.
% as was done in \cite{MR3034461, MR3630284}  for random walks on random trees. 

\subsection{Discrete heat kernel}

Let $p^n(t,x,y)$ be the transition density  of the random walk $X^n$ defined in\eqref{RW_DF}-\eqref{RW_sym},  with respect to its measure $m_n$. That is,
\begin{equation}\label{Def:DiscreteHK}
	p^n(t,x,y):=\frac{\P(X^n_t=y\,|X^n_0=x)}{m_n(y)}.
\end{equation}
% Recall that \eqref{dxv} and Assumption \ref{A:Coe_a} and  Assumption \ref{A:L_comparable} are in force. Since $m_n$ is the symmetrizing measure of $X^n$ by construction, we have
Then  $p^n(t,x,y)=p^n(t,y,x)$ for all $t\geq 0$, $x,y\in \Gamma^n$. 
%As usual we have Chapman-Kolmogorov equation  $$p^n(t+s,a,b)=\sum_{z\in \Gamma^n} p^n(t,a,z)\,p^n(s,z,b)\,m_n(z)$$ and Kolmogorov's equations. 
%The main goal of this subsection is to establish some uniform estimates for $p^n(t,x,y)$ that is essential to the proof of tightness of $\{u^n\}$. 
Recall from Remark \ref{eps_n} that  $\eps_n$ is a representative rate at which every $1/L^e$ tends to zero.

\begin{theorem}\label{T:RWHK}
	Suppose  Assumptions \ref{A:graph}, \ref{A:Coe_a} and \ref{A:C} and Condition (a) of Theorem \ref{T} hold. Then the transition densities $p^n(t,x,y)$ enjoy the following uniform estimates: For any $T\in (0,\infty)$, there exist positive constants $\{C_k\}_{k=1}^7$ and $\sigma$ such that  for all $n\in \mathbb{N}$ and $x,y \in \Gamma^n$, the followings hold.
	\begin{enumerate}
		\item (``Gaussian" upper bounds)
		\begin{align}
			p^n(t,x,y) &\leq \dfrac{C_1}{\eps_n\vee t^{1/2}}\,\exp\left(-\, C_2 \frac{|x-y|^2}{t}\right)
			\quad \hbox{for }  t\in [\eps_n,T]\quad\text{and} \label{e:2.14}\\
			p^n(t,x,y) &\leq \dfrac{C_3}{\eps_n\vee t^{1/2}}\,\exp\left(-\,C_4 \frac{|x-y|}{ t^{1/2}}\right)
			\quad \hbox{for } t\in (0, \epsilon_n]; \label{e:2.15}
		\end{align}
		\item (Gaussian lower bound)
		\begin{equation}\label{e:2.19}
			p^n(t,x,y) \geq \dfrac{C_5}{\eps_n\vee t^{1/2}}\,\exp\left(- C_6 \frac{|x-y|^2}{t}\right) \quad \hbox{for } t\in (0, T];
		\end{equation}
		\item (H\"older continuity)
		\begin{equation}\label{E:HolderCts2}
			|p^n(t,x,y)-p^n(t',x',y')| \leq C_7\, \dfrac{ (\,|t-t'|^{1/2}+ |x-x'|+ |y-y'|\,)^\sigma }
			{(t\wedge t')^{(1+\sigma)/2}}
		\end{equation}
		for all $(t,x,y),\,(t',x',y')\in (0,T]\times \Gamma^n\times \Gamma^n$.
	\end{enumerate}
\end{theorem}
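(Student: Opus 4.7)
The strategy is to derive the long-time estimates \eqref{e:2.14}, \eqref{e:2.19}, and \eqref{E:HolderCts2} from a parabolic Harnack inequality (PHI) holding uniformly in $n$ on $(\Gamma^n,\,m_n,\,\mathcal{E}^n)$, via the Delmotte--Grigor'yan--Telcs equivalence of PHI with two-sided Gaussian heat kernel bounds, while the short-time estimate \eqref{e:2.15} is handled separately by a direct Poisson tail bound on the number of jumps. The prefactor $1/(\epsilon_n \vee t^{1/2})$ reflects $1/m_n(B_n(y,\sqrt{t}))$ at the diffusive scale and $1/m_n(y) \asymp L^{e} \asymp 1/\epsilon_n$ at sub-lattice scales.

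First I would establish that $(\Gamma^n, m_n, \mathcal{E}^n)$ satisfies discrete analogues of \eqref{VD_dif} and \eqref{PI_dif} with constants independent of $n$. For volume doubling: a ball $B_n(x,r)$ of radius $r \geq \epsilon_n$ contains of order $r L^e$ lattice points each of mass $1/L^e$, so $m_n(B_n(x,r)) \asymp m(B(x,r))$, and VD on $\Gamma^n$ inherits uniformly from \eqref{VD_dif}; for $r < \epsilon_n$ the discrete ball is a single point, and doubling is immediate. For the Poincar\'e inequality, given $f:\Gamma^n \to \RR$, I form its piecewise-linear interpolation $\tilde f:\Gamma\to\RR$. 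Assumption \ref{A:C} forces $C^n_{e,\tilde e}\, L^e$ to approach $\alpha_e$ (or the symmetric mean $\Theta(\alpha_e,\alpha_{\tilde e})$ across a vertex) uniformly, from which one checks the two-sided comparisons
\[
\mathcal{E}^n(f,f) \;\asymp\; \int_{\Gamma} \alpha(x)\,|\nabla \tilde f(x)|^2\,m(dx), \qquad \sum_{x \in B_n} (f(x)-\bar f_{B_n})^2 \, m_n(x) \;\asymp\; \int_{B} (\tilde f - \bar{\tilde f}_B)^2 \, dm,
\]
with constants independent of $n$. Applying the continuous PI \eqref{PI_dif} to $\tilde f$ then delivers a discrete PI on $(\Gamma^n, m_n, \mathcal{E}^n)$ uniform in $n$.

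Given uniform VD and PI, Delmotte's theorem (or the Grigor'yan--Telcs framework for metric measure Dirichlet spaces) yields PHI at all scales $\geq \epsilon_n$ with constants depending only on the VD and PI constants. This is equivalent to two-sided Gaussian heat kernel bounds for $t \geq \epsilon_n$, giving \eqref{e:2.14} and the $t \geq \epsilon_n$ part of \eqref{e:2.19}. The H\"older estimate \eqref{E:HolderCts2} then follows from the standard Moser--Nash oscillation inequality for caloric functions implied by PHI, with uniform exponent $\sigma$ and constant $C_7$. For the short-time regime $t \leq \epsilon_n$, \eqref{e:2.15} is obtained directly: the total jump rate out of any $x$ is $\lambda \asymp 1/\epsilon_n^2$, so the number of jumps $N_t$ in time $t$ is dominated by a Poisson variable with mean $\lesssim t/\epsilon_n^2$, and reaching $y$ requires $N_t \geq |x-y|/(2\epsilon_n)$; a Poisson tail bound then yields $\P(X^n_t = y \mid X^n_0 = x) \leq C\,e^{-c|x-y|/\sqrt{t}}$, and dividing by $m_n(y) \asymp \epsilon_n$ produces \eqref{e:2.15}. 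The matching lower bound \eqref{e:2.19} for $t < \epsilon_n$ follows by exhibiting a nearest-neighbor path of length $\asymp |x-y|/\epsilon_n$ from $x$ to $y$ and bounding below the probability of traversing it within time $t$.

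The principal obstacle is uniformity near the vertex set $V$. On a single edge the discrete-to-continuous interpolation is routine, but at a vertex $v$ of degree at least $2$ one must show that the discrete conductances $C^n_{e,\tilde e}$ encode the continuous gluing condition \eqref{glue} robustly, i.e.\ without comparison constants degenerating as $n \to \infty$. Assumption \ref{A:C}, together with the uniform positive lower bound on branch lengths stipulated in Assumption \ref{A:graph}, is precisely what secures this uniformity through the vertex regions; once that is in place, the remainder of the argument follows by standard heat kernel technology.
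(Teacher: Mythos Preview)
Your proposal is correct and follows essentially the same route as the paper: verify volume doubling and Poincar\'e on $(\Gamma^n,m_n,\mathcal{E}^n)$ with constants independent of $n$, invoke Delmotte's Moser iteration to obtain a uniform PHI, and then read off the Gaussian bounds and H\"older continuity. The paper's own proof is in fact sketchier than yours---it defers the short-time estimate and the derivation of \eqref{e:2.14}--\eqref{E:HolderCts2} from PHI to the cited Delmotte paper rather than spelling out the Poisson-tail and path-probability arguments you give.
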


%Beside these immediate properties, we prove that, over any finite time horizon $[0,T]$ and under Assumption  \ref{A:L_comparable}, the transition density $p^{n}$ enjoys two-sided Gaussian bound and is jointly H\"older continuous uniform in $n$, and that the local central limit theorem (local CLT) holds, that is, $p^n$ converges to $p$ locally uniformly. For reference, the precise statement of these results are given later in this Section. Similar results for reflected random walks on Lipschitz domains are given in \cite{MR3678472}.

%In rigorous terms, we have the following results. The important point is that the constants involved are uniform for all $n\in \mathbb{N}$.

%Observe that  \eqref{e:2.14} implies that \eqref{e:2.15} also holds for $t\in [\eps, T]$. 

\begin{remark}\rm
	It is known (for instance \cite{MR1681641, MR3678472}) that the standard Gaussian upper bound fails to hold  for small time for continuous time random walks, but we can use the weaker estimate \eqref{e:2.15}. This small time caveat is not present for the diffusion. See Theorem \ref{T:DiffusionHK}.
\end{remark}

\begin{proof}%{(Sketch)}
	The proof is an application of the famous De Giorgi-Moser-Nash theory to the metric graph setting.
	It is known, for  manifold \cite{MR1354894} and for discrete graphs \cite{MR1681641}, that the two sided Gaussian estimates for reversible Markov chains is equivalent to the parabolic Harnack inequality (PHI), that these estimates are characterized by  geometric conditions, namely volume doubling (VD) plus Poincar\'e inequality (PI), and that (PHI) implies  Holder inequality. See \cite{MR2074701} for a review of results in this area.

%Due to discretization in space, we need one	more condition 
%For our CTRW $X^n$ on $(\Gamma^n, m_n)$, we first	

By \cite[Theorem 1.7]{MR1681641}, all these estimates hold for $p_n$ for fixed $n$, but the key point here is that the {\it constants involved  do not depend on $n$}. 
Recall that Assumptions \ref{A:graph}, \ref{A:Coe_a} and \ref{A:C} and Condition (a) of Theorem \ref{T} are enforced throughout. These clearly give
\begin{equation}\label{GeomCondition_n}
\inf_{n\geq 1}\inf_{(x,y):x\sim y}\frac{C^n_{x,y}}{C^n_x}>0 \quad\text{where }C^n_x:=\sum_{z:z\sim x}C^n_{x,z},
\end{equation}	
which implies that the geometric condition $\Delta(\alpha)>0$ in \cite[Theorem 1.7]{MR1681641} holds uniformly in $n$.

For the rest of the proof we argue that a uniform (in $n$) PHI holds and that PHI implies all the desired estimates for $p^n$.
For simplicity, we assume $L^e_n=L_n$ for all $e\in E$. Then $1/L_n$ is the common rate at which $1/L^e$ tends to zero. We also assume the functions $\alpha$ and $\ell$ are positive constants.
The proof for Theorem \ref{T:RWHK} remains valid without this simplification: the constants will be different but still independent of $n$.

Now  $m_n(x)=1/L_n$ for all $x\in \Gamma^n$ and $n\geq 1$. Furthermore,  $C^n_{x,y} \asymp L_n$
in the sense that for some constants $\kappa,\,\tilde\kappa\in (0,\infty)$ independent of $x,y\in\Gamma^n$ and $n\geq 1$, we have
$\kappa L_n \leq C^n_{xy} \leq \tilde\kappa L_n$.  From this, we have
\begin{equation*}\label{Volume_n}
Vol_n(A) \asymp L_n^2\,m_n(A),
\end{equation*}	
where $Vol_n(A)=\sum_{x\in A}C^n_x$ is the ``volume" in the graph setting in \cite{MR1681641}. 
Denote by $B(x,r):=\{y\in \Gamma:\,d(x,y)<r\}$ a ball of the original metric graph $\Gamma$.
	Note that our ball $B(x,r)\cap \Gamma^n$ is approximately a rescaled version of the usual and more discrete  notion as that in \cite{MR1681641}. Precisely, for $x\in \Gamma^n$, we let	
	 %in the sense that $B(x,r)\cap \Gamma^n\approx B_n(x,r L_n)$, where
	$$B_n(x,r):= \{y\in \Gamma^n:\; \#(x,y)<r \}$$
	and $\#(x,y)$ is the number of edges in $\Gamma^n$ in a shortest path connecting $x$ to $y$. Then due to \eqref{dxv}, for $r>3/L_n$ and $n$ large enough such that $L_n>2$, we have
	\begin{equation}\label{Balls}
	B_n\big(x,r (L_n -2)\big)\subset B(x,r)\cap \Gamma^n  \subset  B_n\big(x,r (L_n+2)\big).
	\end{equation}	
	
	Using \eqref{Volume_n}, \eqref{Balls} and our standing assumptions,
	we can verify the followings two geometric conditions while keeping track of the constants:  volume-doubling property  and the Poincar\'e inequality. That is,  there are constants $\kappa_1,\,\kappa_2>0$ such that {\it for all }$n\geq 1$, we have respectively
	\begin{equation}\label{VD_RW}\tag{VD[$\kappa_1$]}
		m_n(B(x,2r)\cap\Gamma^n)\leq \kappa_1\,m_n(B(x,r)\cap \Gamma^n) 
	\end{equation}
	for all $x\in \Gamma^n$ and $r>0$, and 
	\begin{equation}\label{PI_RW}\tag{PI[$\kappa_2$]}
		\sum_{y\in B(x,r)\cap \Gamma^n}|f(y)-\bar{f}_{B}|^2\,m_n(y) \leq\, \kappa_2 \,r^2\,\mathcal{E}_n(f\,1_{B(x,2r)}) %=  C_2 \,r^2\int_{B(x,2r)}a(y)\,|\nabla_n f(y)|^2\,m_n(dy) 
	\end{equation}
	for all functions $f:\,\Gamma^n\to \RR$, points $x\in \Gamma^n$ and $r>0$, where $\bar{f}_{B}:=m_n^{-1}(B)\sum_{z \in B}f(z)\,m_n(z)$ is the average value of $f$ over $B=B(x,r)\cap \Gamma^n$.  
		
Equipped with \eqref{VD_RW} and \eqref{PI_RW} as well as \eqref{GeomCondition_n}, the Moser iteration argument as detailed in  \cite[Section 2]{MR1681641} yields the parabolic Harnack inequality  for some constant $\kappa_{\mathcal{H}}>0$ that is independent of $n$. That is, for all $x_0\in \Gamma$, $s\in \mathbb{R}$, $r>0$ and non-negative solution $U_n$ of the parabolic equation $\partial_t U_n=\mathcal{L}_nU_n$ on the space-time rectangle $Q:=[s,\,s+r^2]\times B_n(x_0,rL_n)$, we have
	\begin{equation}\label{PHI_pn}\tag{PHI[$\kappa_{\mathcal{H}}$]}
		\sup_{Q-}U_n  \;\leq \; \kappa_{\mathcal{H}}\, \inf_{Q+} U_n,
	\end{equation}
where $Q_{-}:=[s+ \frac{r^2}{4},\,s+ \frac{r^2}{2}]\times B_n(x_0,\,\frac{r}{2}L_n)$ and $Q_{+}:=[s+ \frac{3r^2}{4},\,s+ r^2]\times B_n(x_0,\,\frac{r}{2}L_n)$. 
%The constant $C_{\mathcal{H}} \in (0,\infty)$ does not depend on $n$. 

The proof of \eqref{PHI_pn} follows from \cite[Section 2]{MR1681641} which is an adaptation of \cite{MR1354894}. Roughly,
\eqref{VD_RW}, \eqref{PI_RW} and \eqref{GeomCondition_n} imply a weighted Poincar\'e inequality \cite[Proposition 2.2]{MR1681641} and a Sobolev inequality
\cite[Proposition 2.4]{MR1681641}. The Sobolev inequality and \eqref{VD_RW} yield mean value inequalities for positive sub- and super-solutions \cite[Lemma 2.7]{MR1681641}. The weighted Poincar\'e inequality, on other hand, yields information on the size of $\log v$  in \cite[Lemma 2.8]{MR1681641} for any positive super-solution $v>0$. This information and \cite[Lemma 2.9]{MR1681641} allows us to stick together mean value inequalities on $v$ and $1/v$, giving the parabolic Harnack inequality for positive super-solutions $v>0$. This and the mean value inequality for sub-solutions finally give the desired \eqref{PHI_pn} for non-negative solutions.  All the constants involved in these inequality are independent of $n\geq 1$. 
Since the argument is standard, we refer the reader to \cite[Section 2]{MR1681641} for the details of the proof.

\smallskip

Following  standard arguments such as those in \cite[Section 3.3]{MR1354894} or \cite{stroock1988diffusion}, one can  show that \eqref{PHI_pn} implies the desired inequalities \eqref{e:2.14}-\eqref{E:HolderCts2}. Indeed, the Gaussian upper bounds \eqref{e:2.14}-\eqref{e:2.15} follow from  \cite[Propositions 3.1 and 3.4]{MR1681641}, since the constants in the latter results depend only on $\kappa_{\mathcal{H}}$. 
\cite[Propositions 3.1]{MR1681641} also gives the on-diagonal lower bound that is uniform in $n$. This uniform (in $n$) on-diagonal bound then gives the desired Gaussian lower bound \eqref{e:2.19} by a standard chaining argument \cite[page 329]{stroock1988diffusion}.
%\cite[Propositions 3.1 and 3.4]{MR1681641}  which is suited for the graph setting with  nearest neighbor random walks,
Finally, the H\"older inequality \eqref{E:HolderCts2} is implied by the Gaussian bounds \eqref{e:2.14}-\eqref{e:2.19} by a standard oscillation argument \cite[Theorem II.1.8-Corollary II.1.9]{stroock1988diffusion}.
\end{proof}

%\medskip

The uniform H\"older continuity \eqref{E:HolderCts2} together with the invariance principle imply the following local central limit theorem. 
%This can be checked  by a compactness argument such as \cite[Theorem 2.12]{MR3678472}. 

\begin{theorem}\label{T:LCLT_CTRW}(Local CLT)
	%Let $p(t,x,y)$ be the transition density of an $\mathcal{L}$-diffusion on $\Gamma$ with respect to measure $m$. 
	Suppose all assumptions in Theorem \ref{T:RWHK} hold. Then 
	$$\lim_{n \to\infty} \sup_{t\in I} \sup_{x,y \in K\cap \Gamma^n}\Big|p^{n}(t,x,y)\,-\,p(t,x,y)\Big| =0 $$
	for any compact interval $I\subset (0,\infty)$ and compact subset $K\subset \Gamma$, where $p$ is the transition density \eqref{Def:p} of an $\mathcal{E}$-diffusion with $\ell$ and $\alpha$ given in Example \ref{Eg:fkpp}.
\end{theorem}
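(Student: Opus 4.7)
The strategy is a standard compactness-plus-identification argument: the invariance principle (Lemma \ref{Invar}) pins down the candidate limit in an integrated sense, while the uniform H\"older estimate \eqref{E:HolderCts2} of Theorem \ref{T:RWHK} promotes that to uniform convergence on compact sets. Fix a compact interval $I=[t_1,t_2]\subset (0,\infty)$ and a compact $K\subset \Gamma$. Without loss of generality take $t_1>\eps_n$ for all large $n$, so the ``good'' Gaussian upper bound \eqref{e:2.14} and the lower bound \eqref{e:2.19} apply on $I$. Together with \eqref{E:HolderCts2} these give, uniformly in $n$, (i) a bound $|p^n(t,x,y)|\le C$ on $I\times (K\cap \Gamma^n)^2$ and (ii) a uniform modulus of continuity of exponent $\sigma$ on the same set.

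Extend each $p^n(t,\cdot,\cdot)$ to a continuous function on $I\times K\times K$ by linear interpolation along edges (which preserves the H\"older modulus up to a constant, since $\eps_n\to 0$). By the Arzel\`a--Ascoli theorem the resulting family is precompact in $C(I\times K\times K)$. Suppose $p^{n_k}\to q$ uniformly on $I\times K\times K$ along some subsequence; then $q$ is continuous, symmetric, and satisfies the same Gaussian upper and lower bounds, hence in particular is nonnegative. To finish it suffices to show $q(t,x,y)=p(t,x,y)$ on $I\times K\times K$, because uniqueness of the subsequential limit then forces the whole sequence to converge.

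To identify the limit, fix $x\in K$, $t\in I$, and a continuous compactly supported $f:\Gamma\to\RR$. Let $x_n\in \Gamma^n$ be the nearest deme to $x$, so $x_n\to x$. By Lemma \ref{Invar} applied with initial law $\delta_{x_n}$ (tightness and the Feller property of $X$ let us use pointwise rather than weakly-convergent starting points; alternatively, test against a small neighbourhood of $x$ and then shrink), we have
\[
\int_{\Gamma^n} p^n(t,x_n,y)\,f(y)\,m_n(dy)=\E_{x_n}\!\big[f(X^n_t)\big]\;\longrightarrow\;\E_x\!\big[f(X_t)\big]=\int_\Gamma p(t,x,y)\,f(y)\,m(dy).
\]
On the other hand, since $m_n\to m$ vaguely and $p^{n_k}\to q$ uniformly on $K\times K$, the left-hand side along the subsequence converges to $\int_\Gamma q(t,x,y)f(y)\,m(dy)$ (using the Gaussian upper bound to control the tail outside $K$ by taking $K$ large). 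Equating the two limits and varying $f$ gives $q(t,x,\cdot)=p(t,x,\cdot)$ $m$-a.e., and continuity of both sides upgrades this to equality everywhere on $K$. Varying $t$ and $x$ completes the identification.

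The only delicate point is the interplay between the discrete measure $m_n$ and the continuous $m$, and the passage from a.e.\ to everywhere equality near the vertex set $V$; both are handled because the uniform H\"older estimate \eqref{E:HolderCts2} makes $p^n$ and $p$ continuous on all of $\Gamma$ (including across vertices), and the Gaussian upper bounds give enough integrability to justify the limit exchange outside $K$. Everything else is soft compactness.
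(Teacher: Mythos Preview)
Your proposal is correct and follows essentially the same approach the paper sketches: the paper simply says the result ``can be checked rather easily by a compactness argument'' combining the uniform H\"older continuity \eqref{E:HolderCts2} with the invariance principle (Lemma \ref{Invar}), citing \cite[Theorem 2.12]{MR3678472} for the template. Your write-up supplies exactly those details---equicontinuity and boundedness from Theorem \ref{T:RWHK}, Arzel\`a--Ascoli for precompactness, and identification of any subsequential limit via Lemma \ref{Invar}---so there is nothing materially different to compare.
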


\begin{proof}
The proof follows from a compactness argument, as detailed in \cite[Theorem 2.12]{MR3678472}. 
For each $n\geq 1$ and $t>0$, we extend $p^{n}(t,\cdot,\cdot)$ to a continuous non-negative function on $\Gamma\times \Gamma$ by interpolation; see how it was done for \eqref{approx_den}. We then consider the family $\{t^{1/2}p^n\}_{n\geq 1}$ of continuous functions on $(0, \infty)\times \Gamma \times \Gamma$. The Gaussian upper bound and the H\"older continuity in Theorem \ref{T:RWHK} give us uniform pointwise bound and equi-continuity respectively. Hence, by the Arzela-Ascoli Theorem,   any sub-sequence has a further sub-sequence that converges to a continuous function $q:\,(0,\infty)\times \Gamma\times \Gamma\longrightarrow [0,\infty)$ 
locally uniformly. Lemma \ref{Invar} and the continuity of the transition density $p$ then characterize the limit.
%By the Arzela-Ascoli Theorem, it is relatively compact. i.e. for any sub-sequence, there is a further sub-sequence that converges to a continuous function $q:\,(0,\infty)\times \Gamma\times \Gamma\longrightarrow [0,\infty)$ locally uniformly. On other hand, the weak convergence (Lemma \ref{Invar}) implies that for all $t>0$,
%$$\int_{\Gamma} \phi(y)p(t,x,y)m(dy)=\int_{\Gamma} \phi(y)q(t,x,y)m(dy) \quad \text{for all }\phi\in C_c(\Gamma) \text{ and } x\in \Gamma.$$
%Then by the continuity of both $p$ and $q$, we have $q=p$ on $(0,\infty)\times \Gamma\times \Gamma$. In conclusion, we have $p^{n}$ converges to $p$ locally uniformly.
\end{proof}

Related results and ideas can be found in Croydon and Hambly \cite{MR2453564} who investigated general conditions under which the local CLT for random walks on graphs is implied by weak convergence. 

\subsection{Heat kernel for diffusions on $\Gamma$}\label{SS:HK}

%By Theorem 3.1(c) \cite{MR1245308} {\color{red}(what if coefficients are not as smooth as those in \cite{MR1245308}?)}, 

H\"older continuity of $p(t,x,y)$ then follows directly from the local CLT and \eqref{E:HolderCts2}. The two-sided Gaussian bounds for $p(t,x,y)$ do {\it not} directly follow from Theorem \ref{T:RWHK}, but 
we can establish them in the same way. 
In fact we will establish these estimates for general $\mathcal{E}$-diffusion rather than only those in Example \ref{Eg:fkpp}: 
The volume-doubling property we need for the diffusion is exactly stated in Assumption \ref{A:graph}.  Recall that $\nu(dx)=\ell(x) m(dx)$. The PI in Assumption \ref{A:graph}  implies that
\begin{align}\label{PI_dif2}
	\int_{B(x,r)}|f(y)-\bar{f}_{B}|^2\,\nu(dy) &\leq (\sup_{\Gamma}\ell) \,\int_{B(x,r)}|f(y)-\bar{f}_{B}|^2\,m(dy) \notag\\ 
	&\leq \frac{C_{PI}\,(\sup_{\Gamma}\ell) \,}{\inf_{\Gamma} \alpha} \,r^2\,\mathcal{E}\big(f\,1_{B(x,2r)}\big)
\end{align}
for all $f\in W^{1,2}(\Gamma,m)$, $x\in \Gamma$, $r>0$. With the VD \eqref{VD_dif} and PI \eqref{PI_dif2}, it is well-known (see for instance \cite{MR2962091, gyrya2011neumann} and the references therein) that the transition density $p(t,x,y)$ satisfies the parabolic Harnack inequality, which is equivalent to two sided Gaussian estimates and implies the H\"older continuity.

We summarize these important properties about transition densities $p(t,x,y)$ of diffusions on graphs in the following theorem. These properties applies to the diffusions considered in \cite{MR1245308,MR1743769, MR3634278,cerrai2019fast}.

\begin{theorem}\label{T:DiffusionHK}
	Suppose  Assumptions \ref{A:graph} and \ref{A:Coe_a} hold.
	Then the transition density of the $\mathcal{E}$-diffusion on $\Gamma$, defined in \eqref{Def:p}, enjoys the following properties: For any $T\in (0,\infty)$, there exist positive constants $\{C_k\}_{k=1}^5$ and $\sigma$  such that we have
	\begin{enumerate}
		\item (Two-sided Gaussian bounds)
		\begin{equation}\label{DiffusionGE}
			\dfrac{C_1}{t^{1/2}}\,\exp\left(- C_2 \frac{|x-y|^2}{t}\right) \leq p(t,x,y) \leq \dfrac{C_3}{t^{1/2}}\,\exp\left(- C_4 \frac{|x-y|^2}{t}\right) 
		\end{equation}
		for all  $x,y \in \Gamma$ and $t\in (0,T]$; and
		\item (H\"older continuity)
		\begin{equation}\label{DiffusionHolder}
			|p(t,x,y)-p(t',x',y')| \leq C_5\, \dfrac{ (\,|t-t'|^{1/2}+ |x-x'|+ |y-y'|\,)^\sigma }
			{(t\wedge t')^{(1+\sigma)/2}}
		\end{equation}
		for all $(t,x,y),\,(t',x',y')\in (0,T]\times \Gamma\times \Gamma$.
	\end{enumerate}
\end{theorem}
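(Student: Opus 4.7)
\medskip\noindent\textbf{Proof proposal for Theorem \ref{T:DiffusionHK}.}
The plan is to mirror the strategy used in the proof of Theorem \ref{T:RWHK}: verify a volume-doubling property and a Poincar\'e inequality for the triple $(\Gamma, d, \nu)$ adapted to the Dirichlet form $\mathcal{E}$, then invoke the De\,Giorgi--Moser--Nash machinery in the form established for strongly local regular Dirichlet forms on metric measure spaces (e.g.\ Sturm, Saloff-Coste, Biroli--Mosco). The classical fact we want to apply is that on a metric measure space carrying a strongly local regular Dirichlet form, the conjunction $\mathrm{VD}+\mathrm{PI}$ is equivalent to the parabolic Harnack inequality (PHI), which in turn is equivalent to the two-sided Gaussian heat kernel bounds \eqref{DiffusionGE} and implies the H\"older continuity \eqref{DiffusionHolder}. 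Thus the whole proof reduces to checking $\mathrm{VD}$ and $\mathrm{PI}$ with respect to $\nu$ and $\mathcal{E}$, with constants independent of the base point.

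First I would transfer the two hypotheses in Assumption~\ref{A:graph} from $(\Gamma,d,m)$ to $(\Gamma,d,\nu)$. Since $\ell$ is bounded above and below by positive constants, $\nu = \ell\, m$ and $m$ are comparable, so \eqref{VD_dif} gives the volume-doubling condition for $\nu$ with a possibly enlarged constant $C_{VD}\cdot (\sup\ell)/(\inf\ell)$. This yields $\mathrm{VD}$ for $\nu$. Next, the Poincar\'e inequality already displayed in \eqref{PI_dif2} shows precisely that for every $f\in W^{1,2}(\Gamma,m)$, $x\in\Gamma$, $r>0$,
\begin{equation*}
\int_{B(x,r)}|f(y)-\bar f_{B}|^2\,\nu(dy) \;\le\; \frac{C_{PI}\,(\sup\ell)}{\inf\alpha}\,r^2\,\mathcal{E}(f\mathbf{1}_{B(x,2r)}),
\end{equation*}
where the use of $\inf\alpha>0$ converts the gradient integral against $m$ into the Dirichlet energy. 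Because $W^{1,2}(\Gamma,\nu)=W^{1,2}(\Gamma,m)$ as sets (again by two-sided bounds on $\ell$), this is a genuine Poincar\'e inequality for $\mathcal{E}$ on $(\Gamma,d,\nu)$, with a constant depending only on the bounds of $\alpha,\ell$ and on $C_{PI}$.

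Having $\mathrm{VD}$ and $\mathrm{PI}$, I would quote the equivalence theorem (Sturm's version for local Dirichlet forms, or the metric-graph adaptation used in Remark~\ref{Rk:graph}) to obtain the PHI for non-negative solutions of $\partial_t u=\mathcal{L}u$ on $\Gamma$. Standard oscillation/chaining arguments (Moser iteration for the upper bound, a Nash-type argument and a chaining estimate for the lower bound) then yield \eqref{DiffusionGE}, while the H\"older estimate \eqref{DiffusionHolder} follows from PHI applied to the parabolic cylinders based at $(t,x)$ and $(t',x')$ in the usual way. The bookkeeping is entirely local in the constants, so the estimates hold with constants depending only on $C_{VD}$, $C_{PI}$, the bounds of $\alpha,\ell$, and $T$.

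The main obstacle is the vertex singularities of $\Gamma$: the iteration arguments in De\,Giorgi--Moser--Nash are typically formulated assuming some regularity of the balls and of the cut-off functions, but here balls around vertices $v\in V$ are star-shaped unions of intervals and cut-offs are only piecewise smooth. This is, however, exactly the setting in which $\mathrm{VD}+\mathrm{PI}$ still suffices, because the classical proofs proceed via integration-by-parts within $\mathcal{E}$ and do not require smooth structure beyond a regular strongly local Dirichlet form; the gluing condition \eqref{glue} is automatic for functions in $W^{1,2}(\Gamma,\nu)$. A second technical point worth verifying carefully is that the positive infimum on branch lengths prevents the Poincar\'e constant from degenerating on small balls centered at vertices, so that the whole family of estimates is genuinely uniform in $x\in\Gamma$, as required for \eqref{DiffusionGE} and \eqref{DiffusionHolder}.
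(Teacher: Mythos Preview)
Your proposal is correct and follows essentially the same route as the paper: transfer VD and PI to the pair $(\nu,\mathcal{E})$ using the two-sided bounds on $\ell$ and $\alpha$ (the paper displays exactly your Poincar\'e inequality as \eqref{PI_dif2}), then invoke the standard equivalence $\mathrm{VD}+\mathrm{PI}\Leftrightarrow\mathrm{PHI}\Leftrightarrow$ two-sided Gaussian bounds, with H\"older continuity as a consequence of PHI. The only minor difference is that the paper also remarks that H\"older continuity can alternatively be read off from the local CLT (Theorem~\ref{T:LCLT_CTRW}) together with the uniform discrete estimate \eqref{E:HolderCts2}, whereas you obtain it directly from PHI; both are valid.
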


%\medskip

Theorem \ref{T:DiffusionHK} implies many useful properties of the diffusion $X$, including exit time estimates and strong continuity of the semigroup $\{P_t\}$ on $C_b(\Gamma)$, the space of bounded continuous functions with local uniform norm. 
%It is remarkable that no smoothness assumption is required for $a$ and $\ell$. 

\section{Proof of Theorem \ref{T}}\label{S:Pf1}

Equipped with Theorems \ref{T:RWHK}-\ref{T:DiffusionHK}, we can follow the outline of the proofs in \cite{MR1346264, MR3582808} to finish the proof of Theorem \ref{T}. 
We shall emphasize new terms and new difficulties that did not appear in \cite{MR1346264, MR3582808} in our calculations.
The dynamics of $(\xi_t)_{t\geq 0}$ is concisely described by the equation
\begin{align}\label{xieq}
	\xi_t(z) = \xi_0(z)  &+ \sum_{w\sim z} \int_0^t (\xi_{s-}(w) - \xi_{s-}(z)) \, dP^{z,w}_s \notag\\
	& + \sum_{w\sim z} \int_0^t \xi_{s-}(w)(1 - \xi_{s-}(z)) \, d\tilde P^{z,w}_s.
\end{align}
In the  space-time graphical representation (see, for instance \cite[Fig. 1]{MR3582808}), we draw an arrow $z \leftarrow w$ when there is a jump for the Poisson processes.

\subsection{Approximate martingale problem}\label{S:ApproxMtg}

For functions $f,g:\,\Gamma^n\to \RR$, we write 
\begin{equation}\label{Def:innerprod}
	\<f,g\>_{e}:= \frac{1}{L^e}\sum_{x\in e^{n}} f(x)g(x) \quad\text{and}\quad
	\<f,g\>:=\<f,g\>_{m_n}:=\sum_{e}\<f,g\>_{e}.
\end{equation} 
We also identify $f$ with a function on $\Lambda_n$ by setting $f(z):=f(x)$ when $z=(x,i)$.

Let $\phi: [0,\infty)\times \Gamma \to \RR$ be a continuous function that is continuously differentiable in $t$, twice continuously differentiable and has compact support in $x\in \Gamma$ and satisfies the boundary condition 
\begin{equation}\label{glue_phi}
	\big(\nabla_{out}\,\phi_t\,\cdot [\alpha]\big)(v)=0 \quad\text{for }t\geq 0,
\end{equation}
where we write $\phi_t(x)=\phi(t,x)$ interchangeably.
%{\color{red}[Be more precise about the class of function. Explain relation of this class to the martingale problem method.]}

%\begin{remark}
%We shall take $\phi$ to be the transition density functions as in
%\label{E:testfcn}, which satisfy the condition in the previous sentence
%\end{remark}

Applying integration by parts to $\xi_t(z) \phi_t(z)$, using \eqref{xieq}, and summing over $x$, we obtain for all $t>0$ and edge $e$, we have
\begin{align}
	\<u^n_t,\,\phi_t\>_e- & \<u^n_0,\,\phi_0\>_e  -\int_0^t \<u^n_s,\partial_s\phi_s\>_e\,ds  
	\nonumber\\
	&= (M^eL^e)^{-1} \sum_{z\in \Lambda^{e}_n} \sum_{w\sim z} \int_0^t (\xi_{s-}(w) - \xi_{s-}(z)) \phi_s(z) \, dP^{z,w}_s
	\label{term1}\\
	& + (M^eL^e)^{-1} \sum_{z\in \Lambda^{e}_n} \sum_{w\sim z} \int_0^t \xi_{s-}(w)(1 - \xi_{s-}(z)) \phi_s(z) \, d\tilde P^{z,w}_s.
	\label{term2}
\end{align}
Note that $w$ in \eqref{term1} and \eqref{term2} can be on a different edge $\tilde{e}\in E(v)\setminus e$ if $z$ is adjacent to a vertex $v$. 

\medskip

\noindent
{\bf Outline. }
To describe the local behavior near a vertex and to simplify the presentation, we first consider the case when $\Gamma$ consists of $deg(v)$ positive half real lines starting from a single common vertex $v$. For each $e\in E(v)$ we enumerate the set $e^n$ as $(x_1^e,\,x_2^e,\,x_3^e,\cdots)$ along the direction of $e$ away from $v$, so $x_1^e$ is closest to $v$. We also identify $x_{k}^e+(L^e)^{-1}$ with $x_{k+1}^e$. See {\bf Figure \ref{Fig:LatticeGn}}.
%\Lnote{Define $C^v_{e,\tilde{e}}$, $A^v_{e,\tilde{e}}$ , $B^v_{e,\tilde{e}}$ and let  $A^n_{e,e}:=A_{e,e}$ and $B_e:=B_{e,e}$. We drop the superscript $v$ when there is no confusion.}
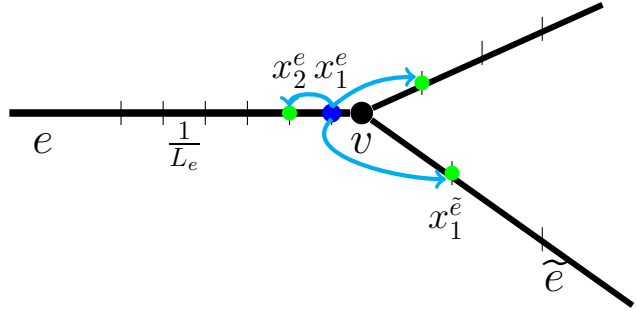
\begin{figure}
	\begin{center}
		\begin{tikzpicture}[scale=0.8]
		
		\node (O) at (0,4) {};
		
		\node[big dot] (V1) at (6,4) {};
		\node at (6,3.5) {\LARGE $v$};
		
		%\node[big dot] (V2) at (12, 7) {};
		%\node at (12,6.5) {\LARGE $v_2$};
		
		\draw [line width=1mm] (O) -- (V1);
		\draw [line width=0.8mm] (V1) -- (10,5.8);
		\draw [line width=0.8mm] (V1) -- (10.5,0.8);
		%\draw [line width=0.8mm] (V2) -- (16, 5);
		
		\node at (3.05,3.45) {\Large $\frac{1}{L_e}$};
		\node at (0.7, 3.5) {\LARGE $e$};
		\node at (9.2, 1.3) {\LARGE $\widetilde{e}$};
		\node at (10, 0.5) {};
		%\node at (10, 5.5) {\LARGE $e_3$};%
		%\node at (15, 5) {\LARGE $e_4$};%
		
		\draw [-] (2,3.8) -- (2,4.2);
		\draw [-] (2.7,3.8) -- (2.7,4.2);
		\draw [-] (3.4,3.8) -- (3.4,4.2);
		\draw [-] (4.1,3.8) -- (4.1,4.2);
		\draw [-] (4.8,3.8) -- (4.8,4.2);
		\draw [-] (5.5,3.8) -- (5.5,4.2);
		
		\draw node[fill=blue, circle, inner sep=0pt, minimum size=2.5mm] at (5.5,4) {};
		\draw node[fill=green, circle, inner sep=0pt, minimum size=2mm] at (4.8,4) {};
		
		\draw [-] (7,4.3) -- (7,4.7);
		\draw [-] (8,4.8) -- (8,5.2);
		\draw [-] (9,5.2) -- (9,5.6);
		%\draw [-] (10,5.8) -- (10,6.2);
		%\draw [-] (11,6.3) -- (11,6.7);
		
		\draw node[fill=green, circle, inner sep=0pt, minimum size=2mm] at (7,4.5) {};
		
		%\draw [-] (12.8,6.4) -- (12.8,6.9);
		%\draw [-] (13.6,6) -- (13.6,6.5);
		%\draw [-] (14.4,5.6) -- (14.4,6.1);
		%\draw [-] (15.2,5.2) -- (15.2,5.7);
		
		\draw [-] (7.5,2.8) -- (7.5,3.2);
		\draw [-] (9,1.7) -- (9,2.1);
		%\draw [-] (10.5,0.8) -- (10.5,1.2);
		
		\draw node[fill=green, circle, inner sep=0pt, minimum size=2mm] at (7.5,3) {};
		
		\draw [->,ultra thick, cyan] (5.5,4.1) to [out=90,in=90] (4.8,4.1);
		\draw [->,ultra thick, cyan] (5.5,4.1) to [out=40,in=180] (6.9, 4.6);
		\draw [->,ultra thick, cyan] (5.5,3.9) to [out=225,in=180] (7.4,2.9);
		
		\node at (5.5,4.7) {\Large $x^e_1$};
		\node at (4.8,4.7) {\Large $x^e_2$};
		%\node at (4.1,4.55) {\large $x^e_3$};
		%\node at (3.4,4.55) {\large $x^e_4$};
		\node at (7.4,2.25) {\Large $x^{\tilde{e}}_1$};
		
		\end{tikzpicture}		
		\caption{Illustration of demes $x_k^e$ on the discretized graph $\Gamma^n$ \label{Fig:LatticeGn}}
	\end{center}
\end{figure}	
Observe that when both $z$ and $w$ are on $e$ but {\it not} in $x^e_1$, then $a^{z,w}=a^{w,z}$. Hence we split the double sum in \eqref{term1} as
\begin{equation}\label{splitterm}
	\sum_{z\in\Lambda^e_n} \sum_{w\sim z} {\bf 1}_{\{w,z\notin x_1^e\}} + \Big(\sum_{z\in x_1^e} \sum_{w\sim z} + \sum_{z\in x_2^e} \sum_{w \in x_1^e}\Big) \;:=\;\eqref{term1}(i)\,+\,\eqref{term1}(ii),
\end{equation}
where $\eqref{term1}(ii)$ is the term $(\sum_{z\in x_1} \sum_{w\sim z} + \sum_{z\in x_2} \sum_{w \in x_1})$. 
%and call these two terms $\eqref{term1}(i)$ and $\eqref{term1}(ii)$ respectively. 
The sum in $\eqref{term1}(i)$  is symmetric and 
we shall makes use the gluing condition \eqref{glue_phi} to treat this term; while treatment of the (boundary) term $\eqref{term1}(ii)$ required the choices of parameters in Conditions (a)-(c) of Theorem \ref{T}. The next 3 subsections are computations for terms $\eqref{term1}(i)$, $\eqref{term1}(ii)$ and $\eqref{term2}$ respectively, which lead to the  martingale problem for $u^n$.

As we shall see, since $\phi$ has compact support and the branch lengths of $\Gamma$ is bounded below by a positive constant (Assumption \ref{A:graph}), all sums involving $\phi$ are finite sums and the proofs in Subsections \ref{S:ApproxMtg}-\ref{S:tight} work equally well for any graph satisfying Assumption \ref{A:graph}. 

\subsubsection{Term \eqref{term1}(i): White noise, Laplacian and Glueing condition}

For $\eqref{term1}(i)$, some ideas in \cite[Section 5]{MR3582808} can be reused which we now briefly describe. 
Let $\xi^c_t(z) := 1- \xi_t(z)$ and rewrite  the integrand of \eqref{term1} as
\begin{align}
	&(\xi_{s-}(w) - \xi_{s-}(z))\, \phi_s(z) \notag\\
	=\;&  [\xi_{s-}(w)\,\xi^c_{s-}(z) - \xi_{s-}(z)\,\xi^c_{s-}(w)]\,\phi_s(z) \notag \\ %\label{term1new}
	=\;& \xi_{s-}(w)\,\xi^c_{s-}(z) \phi_s(w) - \xi_{s-}(z)\,\xi^c_{s-}(w) \phi_s(z) \label{ig1} \\
	& + \xi_{s-}(w)\,\xi^c_{s-}(z)\, \big(\phi_s(z) - \phi_s(w)\big). \label{ig2}
\end{align}

As we will see, the white noise will come from \eqref{ig1} while the Laplacian term and the gluing term come from \eqref{ig2}. To simplify notation we denote $A^n_{\tilde{e},\,e}:=2\,C^n_{\tilde{e},e}L^e/M^e$, so $a^{z,w}=A^n_{\tilde{e},\,e}$ in Condition (c) in the statement of Theorem \ref{T}.

\medskip

{\bf White noises. } 
We first work with \eqref{ig1}. 		
Interchanging the roles of $z$ and $w$ in the first double sum
and writing $Q^{z,w}_s = P^{w,z}_s - P^{z,w}_s$, this part of $\eqref{term1}(i)$ becomes a martingale 
\begin{equation}\label{term1i}
	Z^e_t(\phi):=(M^eL^e)^{-1} \sum_{z\in\Lambda^e_n} \sum_{w\sim z} {\bf 1}_{\{w,z\notin x_1\}}\int_0^t  \xi_{s-}(z) \xi^c_{s-}(w) \phi_s(z) \, dQ^{z,w}_s.
\end{equation}
Since $a^{z,w}=a^{w,z}$ and  the quadratic variance process $\langle Q^{z,w} \rangle_t = 2A^n_{e,e} t$, we have quadratic variation $\<Z^e(\phi)\>_t$
converges to
$$ 
\gamma_e \int_0^t  \int_{e}  u_s(x)\,(1-u_s(x)) \phi_s(x)^2 \,m(dx) \, ds
$$
since $A^n_{e,e}/L^e\to\gamma_e/4$ by Conditions (a)-(c) and by the smoothness assumption of $\phi$, assuming $\mathcal{C}$-tightness of $\{u^n\}$.  Details for this part is the same as those in \cite[Section 5]{MR3582808}.
{\it Here, and in what follows in this Subsection, the claimed convergences follow once we have proved $\mathcal{C}$-tightness. See Subsection \ref{S:tight} for the proof, which does not use any of the convergences claimed in this Subsection. See Remark \ref{Rk:testfcn}.}

\medskip

{\bf Laplacian and gluing condition. } Next we work with \eqref{ig2}. We write $L=L^e$ and $M=M^e$ for simplicity when there is no confusion, and denote the discrete gradient and the discrete Laplacian respectively by
\begin{align}
	\nabla_Lf(x)& :=L\,\Big(f(x+L^{-1})-f(x)\Big) \qquad \qquad\qquad\quad \text{if } x\in e^n,\label{dnabla}\\
	\Delta_Lf(x)&:= L^2\,\Big(f(x+L^{-1})+f(x-L^{-1})-2f(x)\Big) \quad \text{if } x\in e^n\setminus\{x^e_1\}. \label{dLaplacian}
\end{align}	

%\medskip

We break \eqref{ig2} into the average terms and a fluctuation term
\begin{align}
	& (ML)^{-1} \sum_{z \in\Lambda^e_n} \sum_{w\sim z}{\bf 1}_{\{w,z\notin x_1\}} \int_0^t \xi_{s-}(w) \xi^c_{s-}(z) [\phi_s(z) - \phi_s(w)] \, A^n_{e,e} \, ds
	\label{term3a}\\
	& + (ML)^{-1} \sum_{z \in\Lambda^e_n} \sum_{w\sim z}{\bf 1}_{\{w,z\notin x_1\}} \int_0^t \xi_{s-}(w) \xi^c_{s-}(z) [\phi_s(w) - \phi_s(z)]  ( dP^{z,w}_s - A^n_{e,e} \, ds).
	\label{term3b}
\end{align}
We can replace $\xi^c_{s-}$ by 1 in \eqref{term3a} without changing its value, 
because by symmetry (valid since we are summing over the {\it same} set for $z$ and $w$), 
$$\sum_z \sum_{w\sim z}\xi_{s-}(w) \xi_{s-}(z) [\phi_s(z) - \phi_s(w)] =0\qquad\text{for all } s>0.$$ %for all $s>0$. 

Doing the double sum over $w$ and then over $z\sim w$, and recalling that we identify $x_{k}^e+(L^e)^{-1}$ with $x_{k+1}^e$, we see that the integrand of \eqref{term3a} is
\begin{align}
	&(M^eL^e)^{-1} \sum_{z \in\Lambda^e_n} \sum_{w\sim z}{\bf 1}_{\{w,z\notin x_1^e\}}  \xi_{s-}(w)  [\phi_s(z) - \phi_s(w)] \, A^n_{e,e} \notag\\
	=&  \frac{A^n_{e,e}M}{L^3}\sum_{x \geq x^e_3} u^n_{s-}(x) \Delta_L \phi_s(x)+ \frac{A^n_{e,e}M}{L}  u^n_{s-}(x^e_2)\,[\phi_s(x^e_3)-\phi(x^e_2)] \notag\\
	=&  \frac{A^n_{e,e}M}{L^3}\sum_{x \geq x^e_2} u^n_{s-}(x) \Delta_L \phi_s(x)- \frac{A^n_{e,e}M}{L}  u^n_{s-}(x^e_2)\,[\phi_s(x^e_1)-\phi(x^e_2)] \notag\\
	=&  \frac{A^n_{e,e}M}{L^2}\,\left\{ \Big(\frac{1}{L}\sum_{x \geq x^e_2} u^n_{s-}(x) \Delta_L \phi_s(x)  \Big)+   u^n_{s-}(x^e_2)\,\nabla_L\phi_s(x^e_1) \right\}.\label{GlueT0}
	%=&\alpha_e \<u^n_{s-},\,\Delta_L \phi_s\>_{e}+\alpha_e\, u^n_{s-}(x_2)\,\nabla_L\phi_s(x_2)+O((L^e)^{-1}).\notag
\end{align}
By our assumption on $\phi$, $\Delta_L \phi_s$ converges to $\Delta\phi_s$ uniformly on compact subsets of $e$ and  $\lim_{n\to\infty}\frac{A^n_{e,e}M}{L^2}=\alpha_e$, so \eqref{term3a} converges to
\begin{equation}\label{LapBound}
	\alpha_e\int_0^t \big(\int_eu_s(x)\Delta\phi_s(x)\,dx\big) + u_{s}(v)\,\nabla_{e+}\phi_s(v)\,ds.
\end{equation}
Using the \textit{gluing assumption} \eqref{glue_phi}, we see that the last term will vanish upon summation over $e\in E(v)$.
The other term, 
\eqref{term3b}, is a martingale $E^{(1)}_t(\phi)$ with
\begin{align*}
	\langle E^{(1)}(\phi) \rangle_t	\leq &\, \frac{A^n_{e,e}}{(ML)^2} \sum_z \sum_{w\sim z} \int_0^t  (\phi_s(z) - \phi_s(w))^2  \, ds\\
	= &\, \frac{2A^n_{e,e}}{L^2} \int_0^t \<1,\, |\nabla_L\phi_s|^2\>_e  \, ds \to 0
\end{align*}
since $A^n_{e,e}/L^e \to \gamma/4$ and $L^e \to \infty$, by \eqref{Cond_C}.

\subsubsection{Term \eqref{term1}(ii): Matching boundary condition}

%Let $$q_{e,\tilde{e}}:=\frac{2C_{e,\tilde{e}}}{\alpha_eL^e}\quad \text{for all }e, \tilde{e}\in E(v).$$

%\medskip

We now deal with $\eqref{term1}(ii)$ which are the remaining terms
$\sum_{z\in x^e_1} \sum_{w\sim z} + \sum_{z\in x^e_2} \sum_{w \in x^e_1}$ in \eqref{splitterm}. Our goal is to show that this term converges to zero under the choice for $C^n_{xy}$'s specified in condition (c) of Theorem \ref{T}.
For this we further take a summation over all edges in $E(v)$ to obtain
\begin{align}
	&\sum_{e\in E(v)}(M^eL^e)^{-1} \sum_{z\in x^e_1} \sum_{w\sim z} \int_0^t (\xi_{s-}(w) - \xi_{s-}(z)) \phi_s(z) \, dP^{z,w}_s\notag \\
	&+\sum_{e\in E(v)}(M^eL^e)^{-1}  \sum_{z\in x^e_2} \sum_{w \in x^e_1} \int_0^t (\xi_{s-}(w) - \xi_{s-}(z)) \phi_s(z) \, dP^{z,w}_s\notag\\
	=& \sum_{e\in E(v)}(M^eL^e)^{-1}\sum_{\tilde{e}\in E(v)\setminus e} \sum_{z\in x^{e}_1} \sum_{w\in x^{\tilde{e}}_1} \int_0^t (\xi_{s-}(w) - \xi_{s-}(z)) \phi_s(z) \, dP^{z,w}_s \label{boundary1}\\
	&+ \sum_{e\in E(v)}(M^eL^e)^{-1} \Big( \sum_{z\in x^e_2} \sum_{w \in x^e_1}+ \sum_{w\in x^e_2} \sum_{z \in x^e_1}\Big) \int_0^t \big(\xi_{s-}(w) - \xi_{s-}(z)\big) \phi_s(z) \, dP^{z,w}_s. \label{boundary2}
\end{align} 
We break \eqref{boundary1} into an average term and a fluctuation term
\begin{align}
	&\sum_{e\in E(v)}(M^eL^e)^{-1} \sum_{\tilde{e}\in E(v)\setminus e} \sum_{z\in x^{e}_1} \sum_{w\in x^{\tilde{e}}_1} \int_0^t (\xi_{s-}(w) - \xi_{s-}(z)) \phi_s(z) \, A^n_{e,\tilde{e}}\,ds \label{aver1}\\
	+&\sum_{e\in E(v)}(M^eL^e)^{-1}  \sum_{\tilde{e}\in E(v)\setminus e} \sum_{z\in x^{e}_1} \sum_{w\in x^{\tilde{e}}_1} \int_0^t (\xi_{s-}(w) - \xi_{s-}(z)) \phi_s(z) \,\big(dP^{z,w}_s-A^n_{e,\tilde{e}} \,ds\big).\label{fluc1}
\end{align} 

Grouping terms in {\it unordered pairs} of distinct elements in $E(v)$, the average term \eqref{aver1} is 
\begin{align}
	&\sum_{e\in E(v)}\sum_{\tilde{e}\in E(v)\setminus e}\frac{A^n_{e,\tilde{e}}}{M^eL^e}  \sum_{z\in x^{e}_1} \sum_{w\in x^{\tilde{e}}_1} \int_0^t (\xi_{s-}(w) - \xi_{s-}(z)) \phi_s(z) \,ds \notag\\
	=& \sum_{e\in E(v)}\sum_{\tilde{e}\in E(v)\setminus e} 2C^n_{e,\tilde{e}}   \int_0^t \big(u^n_{s-}(x^{\tilde{e}}_1) - u^n_{s-}(x^e_1)\big) \phi_s(x^e_1) \,ds\notag\\
	%=& \sum_{e\in E(v)}\sum_{\tilde{e}\in E(v)\setminus e} q_{e,\tilde{e}}\,\alpha_eL^e   \int_0^t \big(u^n_{s-}(x^{\tilde{e}}_1) - u^n_{s-}(x^e_1)\big) \phi_s(v) \,ds +o(1) \notag\\
	=&\int_0^t  \sum_{(e,\tilde{e})\text{unordered}}\,
	\Big(	2C^n_{e,\tilde{e}}\,-\,2C^n_{\tilde{e},e} \Big)\phi_s(x^e_1)\big(u^n_{s-}(x^{\tilde{e}}_1) - u^n_{s-}(x^e_1)\big)\,ds \,+\,err_1(t) \label{aver1_result_0}\\
	=&\,0\,+\,err_1(t),  \label{aver1_result}
\end{align} 
where
\begin{align}
err_1(t)= &\int_0^t  \sum_{(e,\tilde{e})\text{unordered}}\,
2C^n_{\tilde{e},e}\,\big(\phi_s(x^e_1)-\phi_s(x^{\tilde{e}}_1)\big)\,\big(u^n_{s-}(x^{\tilde{e}}_1) - u^n_{s-}(x^e_1)\big)\,ds. \label{err1}
\end{align}
The equality \eqref{aver1_result} says that the first sum in 
\eqref{aver1_result_0} is zero. To see this, we group the terms of this sum in pairs in such a way that $\sum_{(e,\,\tilde{e})\text{unordered}}$ is over all unordered pairs of distinct elements in $E(v)$. The last equality \eqref{aver1_result} then follows by symmetry of the conductances. 

The error term $err_1(t)$ is $o(1)$, i.e.
tends to $0$ as $n\to\infty$, since $\sup_{s\in[0,T]}\sup_{n\geq 1}L^{e}|\phi_s(x^e_1)-\phi_s(v)|<C_T$ and (by Remark \ref{eps_n})
\begin{equation}
	%\sup_{n\geq 1}\sup_{v}\sum_{(e,\tilde{e})}\alpha_{\tilde{e}}L^{\tilde{e}}\,q_{\tilde{e},e}\Big(\frac{1}{L^e}+\frac{1}{L^{\tilde{e}}}\Big)=
	\sup_{n\geq 1}\sum_{e\in E(v)}\sum_{\tilde{e}\in E(v)}C^n_{\tilde{e},e}\Big(\frac{1}{L^e}+\frac{1}{L^{\tilde{e}}}\Big) <\infty. \label{Cond_err1}
\end{equation}

%so there are $|E(v)|(|E(v)|-1)/2$ terms in this sum.

%\medskip

\begin{comment}\rm
Condition \eqref{Cond_err1} together with \eqref{LM} imply that we have either all $\gamma_e$ vanishes or all $\gamma_e$ are positive, therefore exclude the interesting regime in which PDE and SPDE simultaneously appear on the same graph.
%
Hopefully this condition maybe weakened to
\begin{equation}
\sup_{n\geq 1}\frac{\alpha_{\tilde{e}}L^{\tilde{e}}M^{\tilde{e}}}{M^e}\Big(\frac{1}{L^e}+\frac{1}{L^{\tilde{e}}}\Big)^{1+\nu} <\infty, \label{Cond_err1_v2}
\end{equation}
if we can show that $u^n_t$ is Holder continuous with exponent $\nu$, uniformly for $t\in [0,T]$ and for $n\geq 1$. To show the latter, one might need more regularity assumption for the initial condition $u_0$.
\end{comment}

%\bigskip

The fluctuation term \eqref{fluc1} has quadratic variation
\begin{align*}
	&\sum_{e\in E(v)}\sum_{\tilde{e}\in E(v)\setminus e}\frac{ A^n_{e,\tilde{e}}}{(M^eL^e)^2}  \sum_{z\in x^{e}_1} \sum_{w\in x^{\tilde{e}}_1} \int_0^t \big(\xi_{s-}(w) - \xi_{s-}(z)\big)^2\, \phi^2_s(z) \,ds\\
	\leq &\sum_{e\in E(v)}\sum_{\tilde{e}\in E(v)\setminus e} \frac{2C^n_{e,\tilde{e}}}{L^e}\,\frac{M^{\tilde{e}}}{M^e}  \int_0^t \Big(\frac{u^n_{s-}(x^{\tilde{e}}_1)}{M^e} + \frac{u^n_{s-}(x^e_1)}{M^{\tilde{e}}}\Big) \,\phi^2_s(x^e_1) \,ds \\
	\leq &\sum_{e\in E(v)}\sum_{\tilde{e}\in E(v)\setminus e} \frac{2C^n_{e,\tilde{e}}}{L^e}\,\frac{M^{\tilde{e}}}{M^e} \Big(\frac{1}{M^e} + \frac{1}{M^{\tilde{e}}}\Big) \int_0^t  \phi^2_s(x^e_1) \,ds 
\end{align*} 
which tends to $0$ as $n\to\infty$ since
\begin{equation}
	%\sum_{(e,\tilde{e})}\alpha_eq_{e,\tilde{e}}\,\frac{M^{\tilde{e}}}{M^e}\Big(\frac{1}{M^e}+\frac{1}{M^{\tilde{e}}}\Big)=
	\sum_{(e,\tilde{e})}\frac{C^n_{e,\tilde{e}}M^{\tilde{e}}}{L^eM^e}\Big(\frac{1}{M^e}+\frac{1}{M^{\tilde{e}}}\Big)\leq 
	C\,\sum_{(e,\tilde{e})}\frac{M^{\tilde{e}}}{M^e}\Big(\frac{1}{M^e}+\frac{1}{M^{\tilde{e}}}\Big)
	\to 0, \label{Cond_fluc_0}
\end{equation}
by \eqref{Cond_C}.
%\bigskip

Similarly, we break \eqref{boundary2} into an average term and a fluctuation term
\begin{align}
	&\sum_{e\in E(v)}(M^eL^e)^{-1}  \sum_{z\in x^e_2} \sum_{w \in x^e_1} \int_0^t \big[(\xi_{s-}(w) - \xi_{s-}(z)) \phi_s(z) \, a^{zw} \notag\\
	& \qquad\qquad\qquad\qquad\qquad\qquad+ (\xi_{s-}(z) - \xi_{s-}(w)) \phi_s(w) \,a^{wz}\big]\,ds \label{aver2}\\
	+&\sum_{e\in E(v)}(M^eL^e)^{-1}  \sum_{z\in x^e_2} \sum_{w \in x^e_1} \int_0^t (\xi_{s-}(w) - \xi_{s-}(z)) \phi_s(z) \,\big(dP^{z,w}_s- a^{zw} \,ds\big) \notag\\
	& \qquad\qquad\qquad\qquad\qquad\quad+(\xi_{s-}(z) - \xi_{s-}(w)) \phi_s(w) \,\big(dP^{w,z}_s- a^{wz}\,ds\big). \label{fluc2}
\end{align} 

The average term is equal to
\begin{align}
	&\sum_{e\in E(v)} (M^eL^e)^{-1}\,\frac{2C^n_{ee}L^e}{M^e}\, \sum_{z\in x^e_2} \sum_{w \in x^e_1} \int_0^t (\xi_{s-}(w) - \xi_{s-}(z)) \phi_s(z)+(\xi_{s-}(z) - \xi_{s-}(w)) \phi_s(w)\,ds \notag\\
	=&\sum_{e\in E(v)}2C^n_{ee}   \int_0^t\big(u^n_{s-}(x^e_2) - u^n_{s-}(x^e_1)\big)\, \big(\phi_s(x^e_1)-\phi_s(x^e_2) \big)\,ds \notag\\
	%=& \sum_{e\in E(v)}\alpha_eL^e  \int_0^t\big(u^n_{s-}(x^e_2) - u^n_{s-}(x^e_1)\big)\, \big(\phi_s(x^e_1)-\phi_s(x^e_2)2q_{e,e}\big)\,ds\\
	=& \,err_2(t)
	\label{aver2_result}
\end{align} 
where $err_2(t)$ is the $o(1)$ error term 
\begin{align}
	err_2(t)&=\,-\sum_{e\in E(v)}\frac{2C^n_{ee}}{L^e}\int_0^t\big(u^n_{s-}(x^e_2) - u^n_{s-}(x^e_1)\big)\,\nabla_L\phi_s(x^e_1) \,ds. \label{err2}
\end{align}

The variance of the fluctuation term is 
\begin{align*}
	=&\sum_{e\in E(v)}
	\frac{A^n_{e,e}}{(M^eL^e)^{2}} \sum_{z\in x^e_2} \sum_{w \in x^e_1} \int_0^t \big(\xi_{s-}(w) - \xi_{s-}(z)\big)^2 \,[\phi^2_s(x^e_2) +
	\phi^2_s(x^e_1)]
	\,ds \\
	\leq &\sum_{e\in E(v)}
	\frac{2\,A^n_{e,e}}{(L^e)^{2}}  \int_0^t [\phi^2_s(x^e_2) +
	\phi^2_s(x^e_1)]\,\,ds\notag\\
	=& \sum_{e\in E(v)}
	\frac{4\,C^n_{ee}}{L^e\,M^e}  \int_0^t [\phi^2_s(x^e_2) +
	\phi^2_s(x^e_1)]\,\,ds\to 0\notag
\end{align*}
since $1/M^e \to 0$ and $C^n_{ee}$ is of order $L^e$, as $n\to\infty$, by Remark \ref{eps_n}.

%Finally the average terms \eqref{aver1_result} and \eqref{aver2_result} both tends to zero by conditions (a)-(c) of Theorem \ref{T}.

\begin{comment}
A few more interesting examples are given as follows.
\begin{example}\rm
Suppose 
\begin{equation}
L^e(M^e)^2=L^{\tilde{e}}(M^{\tilde{e}})^2
\end{equation}
whenever $e$ and $\tilde{e}$ are incident to a common vertex. Then the first equation in \eqref{eqt_p} reduces to $q_{e,\tilde{e}}\,\alpha_e-q_{\tilde{e},e}\,\alpha_{\tilde{e}}=0$ and the solution to \eqref{eqt_p} is given by $q_{e,e}=1/c_v$ for all $e$ and 	$$q_{e,\tilde{e}}=  $$
whenever $e\neq \tilde{e}$  in $E(v)$.	
\end{example}
\begin{example}\rm
Suppose furthermore there are only 3 edges labeled $\{1,2,3\}$. Then the solution to \eqref{eqt_p} with $c_v=2$ is given by $p_{i,i}=1/2$ for all $i$ and 
$$p_{i,j}=\frac{\alpha_i+\alpha_j -\alpha_{k}}{4 \alpha_i}  $$
for all $i\neq j$, where $k\in \{1,2,3\}\setminus \{i,j\}$.
Note that an admissible choice for $(p_{i,j})$ exists only if $\alpha_i+\alpha_j -\alpha_{k}>0$ for all permutations $(i,j,k)$ of $\{1,2,3\}$. That is, the maximum among $\{\alpha_i\}_{i=1}^3$ is less than the sum of the other two.
\end{example}
\end{comment}

\subsubsection{Term \eqref{term2}}

{\bf Drift term. } We break \eqref{term2} into an average term and a fluctuation term	
\begin{align}
	& (M^eL^e)^{-1} \sum_{z\in \Lambda^{e}_n} \sum_{w\sim z} \int_0^t \xi_{s-}(w)(1 - \xi_{s-}(z)) \phi_s(z) \, b^{z,w}\, ds
	\label{term2a} \\
	& + (M^eL^e)^{-1} \sum_{z\in \Lambda^{e}_n} \sum_{w\sim z} \int_0^t \xi_{s-}(w)(1 - \xi_{s-}(z)) \phi_s(z)  \, (d\widetilde P^{z,w}_s - b^{z,w}\, ds ).
	\label{term2b}
\end{align}
Recalling the definition of the density $u^n$ in \eqref{approx_den}, we check that \eqref{term2a} becomes 
\begin{align}\label{Y^e_tAlso}
	&  \frac{M^e}{L^e} \sum_{x\in e^n} \int_0^t [u^n_{s-}(x-L^{-1}) ] (1-u^n_{s-}(x)) \phi_s(x) \,B^n_e\, ds \notag\\
	+\,& \frac{M^e}{L^e} \sum_{x\in e^n\setminus \{x^e_1,x^e_2\}} \int_0^t [u^n_{s-}(x+L^{-1}) ] (1-u^n_{s-}(x)) \phi_s(x) \,B^n_e\, ds \notag\\
	+\,& \frac{M^e}{L^e}  \int_0^t u^n_{s-}(x^e_1)  (1-u^n_{s-}(x^e_2)) \phi_s(x^e_2) \,  \widehat{B}^n_{e,e}\,ds \notag\\
	+\,& \sum_{\tilde{e}\in E(v)\setminus e} \frac{M^{\tilde{e}}}{L^e}  \int_0^t u^n_{s-}(x^{\tilde{e}}_1)  (1-u^n_{s-}(x^e_1)) \phi_s(x^e_1) \,  \widehat{B}^n_{e,\tilde{e}}\,ds	 
\end{align}
where $\widehat{B}^n_{e,e}=b^{x^e_2,\,x^e_1}$ and $\widehat{B}^n_{e,\tilde{e}}=b^{x^e_1,\,x^{\widetilde{e}}_1}$.
The sum of the first two terms converges to  
$$
\frac{\beta_e}{2} \int_0^t \int_{e}2 u_s(x) (1-u_s(x)) \phi_s(x) \, dx \, ds\,
$$ 
as $n \to\infty$ by Condition (d). After a further summation over $e$, we see from \eqref{Cond_betav} that the sum of the last two terms tends to 
\begin{equation}\label{Cond_betav2}
	\frac{\hat{\beta}(v)}{4}\int_0^t 2 u_s(v) (1-u_s(v)) \phi_s(v)  \, ds.
\end{equation}
%{\color{red}[Later consider the case $\phi$ discontinuous at $v$]} 
%by \eqref{Cond_betav}.

%for all $e,\tilde{e}\in E(v)$ we have
%\begin{equation}\label{Cond_betav}
%\frac{\beta^e_v\,q_{e,\tilde{e}}\,M^{\tilde{e}}}{M^eL^e}= \frac{\beta^e_v\,M^{\tilde{e}}}{M^eL^e}\frac{C_{e,\tilde{e}}}{\alpha_eL^e}\to \hat{\beta}_{e,\tilde{e}}(v).
%\end{equation}

The second term \eqref{term2b} is a martingale $E^{(2)}_t(\phi)$ with
\begin{align*}
	\langle E^{(2)}(\phi) \rangle_t \le &\,\frac{1}{ (M^eL^e)^{2}} \sum_{z\in \Lambda^{e}_n} \sum_{w\sim z} \int_0^t \phi^2_s(z) \,b^{z,w}\, ds\\
	\leq &\, \frac{(M^e)^2B^n_e+ M^eM^{\tilde{e}}\widehat{B}^n_{e,\tilde{e}}(|E(v)|-1)}{ (M^eL^e)^{2}}  \int_0^t \<1,\,\phi^2_s\>_e\,ds\\
	= &\, \Big(\frac{B^n_e}{ (L^e)^{2}} + \frac{ M^{\tilde{e}}\widehat{B}^n_{e,\tilde{e}}}{L^e}\, \frac{(|E(v)|-1)}{M^eL^e} \Big)\int_0^t \<1,\,\phi^2_s\>_e\,ds \to 0.
\end{align*}
since $\frac{B^n_e}{ (L^e)^{2}}\to 0$, $\frac{ M^{\tilde{e}}\widehat{B}^n_{e,\tilde{e}}}{L^e}$ is bounded and $\frac{(|E(v)|-1)}{M^eL^e}\to 0$.

%Combining the above calculations, we see that any sub-sequential limit $u$ of $u^n$ solves SPDE \eqref{fkpp1} weakly.

%\subsection{Limiting martingale problem}\label{ss:limitmtg}

\medskip

\noindent
{\bf Limiting martingale problem. }
Combining our calculations, we can characterize any sub-sequential limit $u$ of $u^n$. Suppose $u$ is the distributional limit of a sub-sequence of $u^{n}$ in 	$D([0,\infty),\,\mathcal{C}_{[0,1]}(\Gamma))$. Then there is a further sub-sequence that converges almost surely to $u$ which, by our calculation in this section 
\ref{S:ApproxMtg}, satisfies the following: 
%see that any sub-sequential limit $u$ of $u^n$ satisfies the following: 
for any $\phi\in C^{1,2}_c([0,\infty)\times \Gamma)$ which satisfies the gluing condition \eqref{glue} for all $t$,
\begin{align}\label{LimitingMtg}
\int_{\Gamma}u_t(x)&\,\phi_t(x)\,m(dx)\, -   u_0(x)\,\phi_0(x)\,m(dx)   - \int_0^t \int_{\Gamma} u_s(x)\,\partial_s\phi_s(x)\,m(dx)\,ds  
\nonumber \\
&- \int_0^t \int_{\Gamma} \alpha(x) u_s(x) \Delta \phi_s(x) -   \beta(x) \, u_s(x)(1-u_s(x))\, \phi_s(x)\,m(dx) \, ds \nonumber\\
&- \frac{1}{2}\int_0^t  \sum_{v\in V} \hat{\beta}(v) \, u_s(v)(1-u_s(v))\, \phi_s(v) \, ds
\end{align}
is a continuous martingale with quadratic variation
\begin{equation}\label{LimitingQuad}
\int_0^t \gamma(x) \int_{\Gamma} u_s(x)(1-u_s(x)) \phi^2_s(x) \,m(dx) \,ds,
\end{equation}
which is the martingale problem formulation of \eqref{fkpp1}; see p. 536-537 in \cite{MR1346264}. From this, one can construct on a probability space 
%(see p. 536-537 in \cite{MR1346264}) 
(see, for instance, \cite[Section V20]{MR1780932})
a white noise $\dot{W}$ on the Polish space $\Gamma\times[0,\infty)$  such that \eqref{E:WeakSol} holds for all $\phi\in C_c(\Gamma)\cap C^2(\mathring{\Gamma})$. Hence $u$ solves the \eqref{fkpp1} weakly. 
We have shown that, under convergence in distribution in 	$D([0,\infty),\,\mathcal{C}_{[0,1]}(\Gamma))$,
\begin{equation}\label{subseqLimit}
	\text{ any sub-sequential limit of } \{u^n\}\text{ solves the SPDE } \eqref{fkpp1} \text{ weakly.}
\end{equation}
%any sub-sequential limit of $\{u^n\}$ solves the SPDE \eqref{fkpp1} weakly. 

%\begin{remark}\rm\label{Rk:weakExistence}
%Note that \eqref{subseqLimit} combined with tightness give existence of weak solutions to \eqref{fkpp1}. 
%\end{remark}

\begin{remark}\rm\label{Rk:testfcn}
	All calculation in this section  {\it before} taking $n\to \infty$ hold for a more general class of test functions $\phi$. Namely $\phi: [0,\infty)\times \Gamma^n \to \RR$ is merely defined on $\Gamma^n$ for the spatial variable, but it is continuously differentiable in $t$ and such that all sums that appeared in the above calculations are well-defined (e.g. when $\phi$ is bounded and has compact support in $\Gamma$). In particular, the gluing condition \eqref{glue_phi} is not needed in the pre-limit calculations and it is legitimate to apply these calculations to the test function to be defined in \eqref{E:testfcn}.
\end{remark}

In the next two subsections, we establish tightness of $\{u^n\}$. Weak uniqueness of \eqref{fkpp1} (Lemma \ref{WellposeFKPP}) together with \eqref{subseqLimit} then completes the proof of Theorem \ref{T}.

\subsection{Green's function representation}\label{S:green}

Following \cite[Section 4]{MR3582808}, our proof of tightness begins with the Green's function representation of $u^n$.  This will be obtained in \eqref{E:Green_n_sim}-\eqref{E:Green_n2_sim} in this subsection. New terms that do not appear in \cite{MR3582808} will be pointed out in our derivations.

\begin{comment}\rm
Given matrix $(q^{v}_{e,\tilde{e}})$ with positive entries, one easily obtains a unique stochastic matrix $(J_{e,\tilde{e}})$ and unique transition rates $\{\lambda(x^e_1)\}_{e\in E(v)}$. Namely,
\begin{equation}\label{Sol_J}
\left\{\begin{aligned}
J_{e,e}       &\,=\Big(1+\sum_{\tilde{e}\in E(v):\tilde{e}\neq e}q_{e,\tilde{e}}\Big)^{-1}
& &\qquad\text{for all }e\text{ in }E(v) \\
J_{e,\tilde{e}}  &\,= \,J_{e,e}\,q_{e,\tilde{e}}   & &\qquad \text{for all }e\neq \tilde{e}\in E(v).
\end{aligned}\right.
\end{equation}
\end{comment}

Denote $(P^n_t)_{t\geq 0}$ to be the semigroup of $X^n$, defined by 
\begin{equation}\label{SemiRW}
P^n_tf(x):=\E[f(X^n_t)|X^n_0=x]=\sum_{y\in \Gamma^n}f(y)\,p^n(t,x,y)\,m_n(y)
\end{equation}
for bounded measurable functions $f$, where we recall from \eqref{Def:DiscreteHK} that $p^n(t,x,y):=\frac{\P(X^n_t=y\,|X^n_0=x)}{m_n(y)}$.

Observe that $\<f,g\>$ defined before can be written as $\sum_{x\in \Gamma^n}f(x)g(x)m_n(x)$ and that for any $y\in \Gamma^n$,
$$u^n_t(y)= \<u^n_t,\,\sum_e{\bf 1}_{y\in e^n}\, L^e\,{\bf 1}_y\> = \<u^n_t,\,\phi_t\>,$$
where ${\bf 1}_y$ is the indicator function and
\begin{equation}\label{E:testfcn}
	\phi_s(x):=\phi^{t,y}_s(x):= 
	\begin{cases} p^{n}(t-s,x,y) & \text{for }s\in[0,t], \\ 0 & \hbox{otherwise.} \end{cases}
\end{equation}

Applying the approximate martingale problem \eqref{term1}-\eqref{term2}  with test function $\phi_s:=\phi^{t,y}_s$ in \eqref{E:testfcn} (see Remark \ref{Rk:testfcn} for why we can do this)
and using the facts that 
%$\partial_s\phi_s+\mathcal{L}_{n}\phi_s=0$ and $\<u^n_0,\phi^{t,g}_0\> = P^n_{t}u^n_0(g)$ for all $g\in \Gamma^n$, that is,
\begin{itemize}
	\item $\partial_s\phi_s+\alpha_e\Delta_{L^e}\phi_s =o(1)$, where $o(1)$ is a term which tends to 0 as $n\to\infty$, uniformly for $x\in e^n\setminus \{x^e_1\}$, $e\in E$ and $s\in (0,\infty)$ (see Remark \ref{Rk:RWGen}); and 
	% \item $\partial_s\phi_s(x^e_1)+ \sum_{e} \alpha_e  \nabla_{L^e}\phi_s(x^e_1)=0$ and
	\item $\<u^n_0,\phi^{t,y}_0\> = P^n_{t}u^n_0(y)$ for all $y\in \Gamma^n$,
\end{itemize}
we obtain
%, by omitting the $\alpha_e\Delta_L \phi_s(x)$ term in \eqref{GlueT0} and  collecting all other terms,
\begin{align}
	u^n_t(y) =\,& P^n_{t}u^n_0(y)+ \sum_e\frac{1}{L^e} \int_0^t u^n_s(x^e_1)\partial_s\phi_s(x^e_1)\,ds \label{E:Green_n0}\\
	&+ \sum_e \Big(Y^e_t(\phi)+ Z^e_t(\phi)+E^{(1,e)}_t(\phi) + E^{(2,e)}_t(\phi)\Big) \label{E:Green_n}\\
	&  + \sum_e \Big( T^e_t(\phi)+U^e_t(\phi)+ V^e_t(\phi)+ E^{(3,e)}_t(\phi)+ E^{(4,e)}_t(\phi)\Big) \label{E:Green_n2}
\end{align}
for $t\geq 0$ and $y\in \Gamma^n$. Here
%with $\phi_s$ defined in \eqref{E:testfcn}, 
the terms $Z^e_t(\phi)$, $E^{(1,e)}_t(\phi)$, $E^{(2,e)}_t(\phi)$,  $E^{(3,e)}_t(\phi)$ and $E^{(4,e)}_t(\phi)$ are defined in 
\eqref{term1i}, \eqref{term3b}, \eqref{term2b}, \eqref{fluc1} and \eqref{fluc2} respectively; $Y^e_t(\phi)$ is defined in   \eqref{term2a}; the new term
\begin{align}
	& T^e_t(\phi):= \int_0^t   \alpha_e\, u^n_{s-}(x^e_2)\,\nabla_L\phi_s(x^e_1)\;ds \label{Te}
\end{align}
is obtained from \eqref{GlueT0}; note that the $\alpha_e\Delta_L \phi_s(x)$ term is killed due to our choice of $\phi$ in \eqref{E:testfcn}. New terms $\sum_eU^e_t(\phi)$ and $\sum_eV^e_t(\phi)$  are defined in  \eqref{aver1} and \eqref{aver2} respectively.

\medskip

{\bf New technical challenge in proving tightness:} The four terms \eqref{E:Green_n} are analogous to terms in (36) of \cite{MR3582808}, but  all five
terms in \eqref{E:Green_n2} and the $\partial_s\phi_s$ term in \eqref{E:Green_n0}  are new: they come from boundary terms at vertices of $\Gamma$. Treating these new terms  requires the uniform  estimates for the transition density $p^n(t,x,y)$ of random walks on graph, as well as the careful choice of $C^n_{x,y}$ in Condition \eqref{Cond_C}.

\medskip
% by grouping \eqref{GlueT0} and \eqref{err2}
%To avoid clumsy notations we first assume condition \eqref{eqt_p} holds. A routine inspection gives the same conclusion under the weaker assumption \eqref{cond_p}. 
{\bf Cancellation and simplification:}
An important observation is that,
by our choice of  $C^n_{x,y}$, equations \eqref{E:Green_n0}-\eqref{E:Green_n2} simplify to
\begin{align}
	u^n_t(y) =\,& P^n_{t}u^n_0(y) + \sum_e \Big(Y^e_t(\phi)+ Z^e_t(\phi)+E^{(1,e)}_t(\phi) + E^{(2,e)}_t(\phi)\Big) \label{E:Green_n_sim}\\
	&  + \sum_e \Big( E^{(3,e)}_t(\phi)+ E^{(4,e)}_t(\phi)\Big) +o(1). \label{E:Green_n2_sim}
\end{align}
To see this, note that  $U^e_t(\phi)=err_1(t)$ and $V^e_t(\phi)=err_2(t)$ defined in 
\eqref{err1} and \eqref{err2} respectively. That is,
\begin{align*}
	U^e_t(\phi)= &\int_0^t  \sum_{(e,\tilde{e})\text{unordered}}\,
	2C^n_{\tilde{e},e}\big(\phi_s(x^e_1)-\phi_s(x^{\tilde{e}}_1)\big)\,\big(u^n_{s-}(x^{\tilde{e}}_1) - u^n_{s-}(x^e_1)\big)\,ds \\
	V^e_t(\phi)= & \,-\int_0^t   \alpha_e\big(u^n_{s-}(x^e_2) - u^n_{s-}(x^e_1)\big)\,\nabla_L\phi_s(x^e_1)\;ds.
\end{align*}
On other hand, by \eqref{Generator_n2}
\begin{equation*}
	\mathcal{L}_{n}F(x^e_1)\approx\,\alpha_e(L^e)^2\big(F(x^{e}_2) -F(x^e_1)\big)\,+\,\sum_{\tilde{e}\in E(v):\,\tilde{e}\neq e}\big(F(x^{\tilde{e}}_1) -F(x^e_1)\big)\,C^n_{e,\tilde{e}}\,L^e.
\end{equation*}
where the approximation $\approx$ is quantified in the last sentence in Remark \ref{Rk:RWGen}.
So by our choice of $\phi$ and Komogorov's equation, 
\begin{equation}\label{partial_sphi}
	\partial_s\phi_s(x^e_1)\approx\,-\alpha_e(L^e)^2\big(\phi_s(x^{e}_2) -\phi_s(x^e_1)\big)\,-\,\sum_{\tilde{e}\in E(v):\,\tilde{e}\neq e}C^n_{e,\tilde{e}}L^e\big(\phi_s(x^{\tilde{e}}_1) -\phi_s(x^e_1)\big).
\end{equation}

From these it is easy (for example $V$ cancels with the first terms of \eqref{partial_sphi}, $T$ cancels with part of $U$) to check, by using symmetry $C^n_{e,\tilde{e}}=C^n_{\tilde{e},e}$, that we have cancellations
\begin{equation}\label{CancelG}
	0\approx\sum_e\frac{1}{L^e} \int_0^t u^n_s(x^e_1)\partial_s\phi_s(x^e_1)\,ds+  T^e_t(\phi)+U^e_t(\phi)+ V^e_t(\phi),
\end{equation}
giving the desired \eqref{E:Green_n_sim} and \eqref{E:Green_n2_sim}. 

%Using the fact that $\partial_s\phi_s+\alpha_e\Delta_{L^e}\phi_s=0$ on $e^n\setminus \{x^e_1,x^e_2\}$, the discrete Laplacian term in $Y^e_t(\phi)$ (more precisely in \eqref{GlueT0}) is canceled with {\color{red}part of the} term  $\<u^n_s,\partial_s\phi_s\>_e$.

\subsection{Tightness of approximate densities}\label{S:tight}

%Recall that a sequence of probability measures is said to be $C$-tight, if it is tight in the Skorohod space $D$ and  any subsequential limit has a continuous version. 
Our goal of this section is to prove the following $\mathcal{C}$-tightness result.
%which indicates that our sequence of approximation densities is stable.
\begin{prop}\label{T:Tight}
	Suppose the assumptions in Theorem \ref{T} hold. Then the sequence $\{u^n\}_{n\geq 1}$ is tight in  
	$D([0,T],\,\mathcal{C}_{[0,1]}(\Gamma))$ for every $T>0$. Moreover,  any subsequential limit has a continuous version.
\end{prop}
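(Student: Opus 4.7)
The plan is to establish $C$-tightness via a Kolmogorov-Chentsov-type moment bound on space-time increments of $u^n$, using the Green's function representation \eqref{E:Green_n_sim}--\eqref{E:Green_n2_sim} together with the uniform heat-kernel bounds of Theorem \ref{T:RWHK}. Concretely, the target estimate is
\[
\E\big|u^n_t(g) - u^n_{t'}(g')\big|^{2k} \;\leq\; C_{k,T}\,\big(|t-t'|^{1/2} + d(g,g')\big)^{\eta k}
\]
for some $\eta>0$, every sufficiently large integer $k$, all $n$, and all $(t,g),(t',g')\in[0,T]\times\Gamma^n$. By Kolmogorov's continuity criterion applied on each compact $K_i$ entering \eqref{normC}, such a bound yields continuous modifications $\tilde u^n$ with uniformly controlled H\"older seminorms on each $K_i$; together with the pointwise bound $u^n\in[0,1]$, Arzel\`a-Ascoli then gives relative compactness in $C([0,T],C_{[0,1]}(\Gamma))$.

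For the semigroup contribution $P^n_t u^n_0(g)$ in \eqref{E:Green_n_sim}, the required uniform H\"older estimate is an immediate consequence of \eqref{E:HolderCts2} and the convergence $u^n_0\to f_0$ in $C_{[0,1]}(\Gamma)$. For each of the martingale terms $Z^e,\,E^{(1,e)},\,E^{(2,e)},\,E^{(3,e)},\,E^{(4,e)}$ and the drift term $Y^e$, evaluated on the heat-kernel test function $\phi^{t,g}_s(x)=p^n(t-s,x,g)$, I would apply Burkholder-Davis-Gundy to bound the $2k$-th moment of the increment by the $k$-th moment of the quadratic variation, which takes the form of a weighted $L^2$-norm of $\phi^{t,g}_s(\cdot)-\phi^{t',g'}_s(\cdot)$ integrated in time. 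Interpolating between the Gaussian upper bounds \eqref{e:2.14}--\eqref{e:2.15} and the H\"older estimate \eqref{E:HolderCts2} controls this $L^2$-norm by a power of $|t-t'|^{1/2}+d(g,g')$, while the scaling Conditions (b)--(d) ensure that the multiplicative rates are of the correct order in $n$.

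To upgrade tightness in $C([0,T],C_{[0,1]}(\Gamma))$ to $C$-tightness in $D([0,T],C_{[0,1]}(\Gamma))$, I would verify that the pathwise maximal jump vanishes: a single Poisson event flips one site and therefore changes $u^n_t$ at a single deme $x\in e^n$ by $1/M^e_n$, and $M^e_n\to\infty$ uniformly in $e\in E$ by Conditions (a)--(b); hence the jump in the metric \eqref{normC} tends to $0$. Combined with the Kolmogorov estimate, this gives both $C$-tightness and the continuity of every subsequential limit, completing the proof.

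The main obstacle lies in handling the new boundary terms in \eqref{E:Green_n2_sim} that have no analogue in the $\RR$-setting of \cite{MR3582808}: the contributions $T^e$, $U^e$, $V^e$ and the $\partial_s\phi_s$-piece are individually of order $O(1)$ and only combine to $o(1)$ via the cancellation \eqref{CancelG}, which in turn relies on the symmetric choice of conductances secured by Assumption \ref{A:C}. A secondary technical nuisance is the weaker small-time bound \eqref{e:2.15}, valid only for $t\in(0,\eps_n]$, which slightly complicates the integration of $|\phi^{t,g}_s|^2$ near $s=t$; but the $\eps_n^{-1}$ prefactor is still dominated by the Gaussian decay, yielding the correct scaling and preserving the overall estimate.
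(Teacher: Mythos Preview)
Your overall strategy---Green's function representation plus heat-kernel estimates plus a Kolmogorov-type moment bound---is indeed the paper's route, but the target moment bound you write down is too strong to be true and this is a genuine gap. The process $u^n$ is a genuine jump process: each Poisson arrival flips one site and changes $u^n_t(g)$ at some deme by $1/M^e_n>0$. If the estimate
\[
\E\big|u^n_t(g) - u^n_{t'}(g')\big|^{2k} \le C_{k,T}\,\big(|t-t'|^{1/2} + d(g,g')\big)^{\eta k}
\]
held, Kolmogorov's criterion would give a continuous modification $\tilde u^n$; since $u^n$ is already c\`adl\`ag, the two would be indistinguishable, contradicting the presence of jumps. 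The paper's Lemma~\ref{L:Moment_hatu} therefore carries an extra term $(\inf_e M^e)^{-p}$ on the right-hand side, which encodes exactly this jump size and vanishes only as $n\to\infty$. With that correction in hand, one follows the Mueller--Tribe device: build an auxiliary \emph{continuous} process $\tilde u^n$ close to $u^n$ (the distance controlled by the jump size), apply Kolmogorov to $\tilde u^n$, and transfer tightness back. Your paragraph about ``upgrading tightness in $C$ to $C$-tightness in $D$'' has the logic inverted---$u^n$ is never in $C([0,T],C_{[0,1]}(\Gamma))$ to begin with, so there is nothing to upgrade; the jump-size control is precisely what lets you pass from the moment bound \emph{with} the $(\inf_e M^e)^{-p}$ correction to $C$-tightness in $D$.

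A secondary issue concerns the semigroup term. The H\"older estimate \eqref{E:HolderCts2} carries the prefactor $(t\wedge t')^{-(1+\sigma)/2}$, which blows up at $t=0$, so it does not directly yield a uniform H\"older bound for $(t,g)\mapsto P^n_t u^n_0(g)$ down to $t=0$. The paper sidesteps this by the triangle inequality \eqref{E:Semigroups}, reducing to $P^n_t f_0$ for the fixed limit $f_0$, and then invokes uniform \emph{strong continuity} of $\{P^n_t\}$ on $C_b(\Gamma)$ rather than pointwise H\"older continuity of $p^n$. You will need the same splitting.
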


Tightness in $D([0,T],\,\mathcal{M}(\Gamma))$, where $\mathcal{M}(\Gamma)$ is the space of positive Radon measures on $\Gamma$, is much easier to prove than tightness in $D([0,T],\,\mathcal{C}_{[0,1]}(\Gamma))$ since the former can be reduced to one-dimensional tightnesses (by integrating a fixed function against the measures). 
However, it is not easy to identify subsequential limits of measures as those whose density solves SPDE \eqref{fkpp1} weakly.

\begin{proof}[Proof of Proposition \ref{T:Tight}]
	The desired $\mathcal{C}$-tightness follows once we can show that (i) the ``weak" compact containment condition (condition (a) of Theorem 7.2 in \cite[Chapter 3]{MR838085}) holds and (ii) for any $\epsilon>0$, one has
	\begin{align}
		& \lim_{\delta\to 0} \limsup_{n\to\infty}\,\P\,\bigg( \sup_{\substack{t_1-t_2<\delta\\0\leq t_2\leq t_1\leq T}}
		\big\|u^n_{t_1}-u^n_{t_2} \big\|\,>\epsilon \bigg) = 0.
		\label{E:Tightu}
	\end{align}	
	Here and in what follows the norm is the one defined in \eqref{normC}. 
	It is enough to show that \eqref{E:Tightu} holds with $u^n$ replaced by any term in the decomposition given in \eqref{E:Green_n_sim}-\eqref{E:Green_n2_sim}.

	\mn
	{\bf First term in \eqref{E:Green_n_sim}. }
	Upon linearly interpolating  
	$ P^n_{t}u^n_0(x)$ in space, triangle inequality and the contraction property of the semigroup $\{P^n_t\}_{t\geq 0}$ gives
	% we can check as in Lemma 7(b) of \cite{MR1346264} that,  we have
	\begin{equation}
		\label{E:Semigroups}
		\sup_{\substack{t_1-t_2<\delta\\0\leq t_2\leq t_1\leq T}}
		\big\|P^n_{t_1}u^n_0-P^n_{t_2}u^n_0 \big\|\leq 
		2\,\|u^n_0-f_0\| + \sup_{\substack{t_1-t_2<\delta\\0\leq t_2\leq t_1\leq T}}
		\big\|P^n_{t_1}f_0-P^n_{t_2}f_0 \big\|,
		%\sup_{t\in[0,T]}\| P^n_{t}u^n_0 -P_{t}f_0\|\leq \|u^n_0 -f_0\|+\sup_{t\in[0,T]}\| P^n_{t}f_0 -P_{t}f_0\| \to 0 \quad\text{as }n\to\infty,
	\end{equation}
	where 
	%$\{P_t\}_{t\geq 0}$ is the semigroup for the $\mathcal{E}$-diffusion on $\Gamma$, and 
	$f_0$ is the initial condition for $u$ functions in Theorem  \ref{T}.
	So to show that \eqref{E:Tightu} holds with $u^n_t$ replaced by $P^n_{t}u^n_0$, it suffices to show that \eqref{E:Tightu} holds with $u^n_t$ replaced by $P^n_{t}f_0$. The latter can be checked by using 
	%the uniform strong continuity of  $\{P^n_t\}_{t\geq 0}$ on $C_b(\Gamma)$ and 
	the uniform H\"older continuity \eqref{E:HolderCts2}.
	
	\mn
	{\bf Second term in \eqref{E:Green_n_sim}. } 
	For simplicity, we write 
	\begin{align*}
		\widehat{u}^n_t(y)&:=\sum_e \Big(Y^e_t(\phi)+ Z^e_t(\phi)+E^{(1,e)}_t(\phi) + E^{(2,e)}_t(\phi)\Big).
	\end{align*}
	The next moment estimate for space and time increments is similar to  \cite[Lemma 6]{MR1346264} and \cite[Lemma 4]{MR3582808}.
	%but ours implies H\"older continuity of the limits with exponent $<1/2$ in space and $<1/4$ in time. 
	\begin{lemma}
		\label{L:Moment_hatu}
		For any $p\geq 2$ and $T\geq 0$ and compact subset $K$ of $\Gamma$, there exists a constant $C_{T,p,K}\in(0,\infty)$  such that
		\begin{align}
			\label{E:Moment_hatu}
			&\E |\widehat{u}^n_{t_1}(y_1)- \widehat{u}^n_{t_2}(y_2)|^p \leq C_{T,p,K}\,\Big(|t_1-t_2|^{p/4}+d(y_1,y_2)^{p/2}+ (\inf_e M^e)^{-p} \Big) 
		\end{align}
		for all $0\leq t_2\leq t_1\leq T$, $y_1,\,y_2\in \Gamma^n\cap K$ and $n\geq 1$.
	\end{lemma}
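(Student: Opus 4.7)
The plan is to use the decomposition
\[
\widehat{u}^n_{t_1}(g_1) - \widehat{u}^n_{t_2}(g_2) \,=\, \bigl[\widehat{u}^n_{t_1}(g_1) - \widehat{u}^n_{t_1}(g_2)\bigr] + \bigl[\widehat{u}^n_{t_1}(g_2) - \widehat{u}^n_{t_2}(g_2)\bigr]
\]
together with the elementary inequality $|a+b|^p\leq 2^{p-1}(|a|^p+|b|^p)$, so that the space and time increments can be treated separately. For each edge $e$, three of the four terms composing $\widehat{u}^n$ (namely $Z^e$, $E^{(1,e)}$, $E^{(2,e)}$) are martingales while $Y^e$ is a drift. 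I will apply the Burkholder–Davis–Gundy inequality to the martingale parts, reducing the $p$-th moment to the $(p/2)$-th moment of the predictable quadratic variation, and apply H\"older's inequality directly to $Y^e$. Since $\xi_s(1-\xi_s)\leq 1/4$, each of the resulting bounds reduces to a space–time integral of the test function squared, $(\phi^{t,g}_s(x))^2$, or of $|\phi^{t,g}_s(x)|$ for $Y^e$.

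Because $\phi^{t,g}_s(x)=p^n(t-s,x,g)$, the central analytic task is to estimate the following three space–time integrals uniformly in $n$:
\begin{align*}
\mathcal I^1_t(g) &:= \int_0^t \sum_{x\in\Gamma^n}p^n(t-s,x,g)^2\, m_n(x)\,ds, \\
\mathcal I^2_{t_1,t_2}(g) &:= \int_0^{t_2}\sum_{x}\bigl(p^n(t_1-s,x,g)-p^n(t_2-s,x,g)\bigr)^2 m_n(x)\,ds + \int_{t_2}^{t_1}\sum_x p^n(t_1-s,x,g)^2\,m_n(x)\,ds, \\
\mathcal I^3_t(g_1,g_2) &:= \int_0^t \sum_x \bigl(p^n(t-s,x,g_1)-p^n(t-s,x,g_2)\bigr)^2 m_n(x)\,ds.
\end{align*}
Using the semigroup identity $\sum_x p^n(s,x,g)p^n(s,x,g')m_n(x)=p^n(2s,g,g')$ and the uniform Gaussian upper bound \eqref{e:2.14}–\eqref{e:2.15} gives $\mathcal I^1_t(g)\leq C\, t^{1/2}$, while a standard splitting of the integral into the regimes $t-s<d(g_1,g_2)^2$ (bound by $L^1$-contractivity and the Gaussian bound, yielding an $O(d(g_1,g_2))$ contribution) and $t-s\geq d(g_1,g_2)^2$ (use Hölder continuity \eqref{E:HolderCts2} with exponent $\sigma$, or the identity above with subtraction) yields $\mathcal I^3_t\leq C\, d(g_1,g_2)$. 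The same strategy with the time variable, combined with the semigroup property, gives $\mathcal I^2_{t_1,t_2}\leq C|t_1-t_2|^{1/2}$. Raising these to the $p/2$ power produces the claimed $|t_1-t_2|^{p/4}$ and $d(g_1,g_2)^{p/2}$ exponents.

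The additive term $(\inf_e M^e)^{-p}$ accounts for two unavoidable discretization effects. First, when $g_1,g_2$ lie between demes (or across a vertex), the linear interpolation used to define $u^n_t(g)$ contributes an $O(1/M^e)$ pointwise error. Second, in the BDG estimate of jump martingales one must handle the $[M,M]_t$ version of the quadratic variation separately from the predictable one; since each jump of $u^n$ has magnitude $1/M^e$, this contributes an extra term of this order. Drift contributions from $Y^e$ are handled by Hölder's inequality together with $\int\sum_x|\phi^{t,g}_s(x)|m_n(x)\,ds=O(t)$ and give terms of lower order after taking the increment.

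The main obstacle is the space-increment estimate of $\mathcal I^3_t$ with constants independent of $n$: near a vertex $v$ the transition density $p^n$ is not a translation-invariant Gaussian, so one cannot simply differentiate in space and invoke $|\nabla p^n|\lesssim s^{-1}p^n$. This is precisely where the uniform H\"older continuity \eqref{E:HolderCts2} of $p^n$, established in Theorem \ref{T:RWHK} with $n$-independent constants, is essential; combining it with the semigroup identity for $s\gtrsim d(g_1,g_2)^2$ and the diagonal Gaussian bound for smaller $s$ yields the required $O(d(g_1,g_2))$ control across vertices as well as inside edges.
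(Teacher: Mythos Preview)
Your proposal is correct and follows essentially the same route as the paper's (sketchy) proof: reduce the martingale terms via BDG and the drift $Y^e$ via H\"older to space--time integrals of $p^n$ and its increments, then control these using the uniform heat-kernel estimates of Theorem~\ref{T:RWHK}; the paper decomposes the increment jointly (splitting the time integral at $t_2$ and comparing $p^n_{t_1-s}(\cdot,g_1)$ with $p^n_{t_2-s}(\cdot,g_2)$ directly) rather than separating space and time as you do, but this is cosmetic. One minor correction: since the lemma is stated for $g_1,g_2\in\Gamma^n\cap K$, no linear interpolation is involved in $\widehat{u}^n$, so the $(\inf_e M^e)^{-p}$ term comes solely from the jump sizes in the BDG estimate for the compensated Poisson integrals (your second explanation), not from interpolation.
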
	
	%The technical condition $\inf_{e\in E}\alpha_e>0$ is used to guarantee that the term $(\inf_e M^e)^{-p}\to 0$ in Lemma \ref{L:Moment_hatu} to complete the proof of tightness of $\{u^n\}$.
\begin{proof}[Proof of Lemma \ref{L:Moment_hatu}]
	The proof of this result  requires a number of computations involving estimations of the heat kernel $p^n(t,x,y)$.
	By \eqref{Y^e_tAlso},
	$Y^e_t(\phi)$ is equal to
	\begin{align}
	&  \frac{M^eB^n_e}{L^e} \sum_{x\in e^n} \int_0^t [u^n_{s-}(x-L^{-1}) ] (1-u^n_{s-}(x)) \phi_s(x) \, ds \label{Ye1}\\
	+\,& \frac{M^eB^n_e}{L^e} \sum_{x\in e^n\setminus \{x^e_1,x^e_2\}} \int_0^t [u^n_{s-}(x+L^{-1}) ] (1-u^n_{s-}(x)) \phi_s(x) \, ds \label{Ye2}\\
	+\,& \frac{M^eB_{e,e}}{L^e}  \int_0^t u^n_{s-}(x^e_1)  (1-u^n_{s-}(x^e_2)) \phi_s(x^e_2) \, ds \label{Ye3}\\
	+\,& \sum_{\tilde{e}\in E(v)\setminus e} \frac{M^{\tilde{e}}B_{e,\tilde{e}}}{L^e}  \int_0^t u^n_{s-}(x^{\tilde{e}}_1)  (1-u^n_{s-}(x^e_1)) \phi_s(x^e_1) \,  ds \label{Ye4}
	\end{align}
	To estimate $Y^e_{t_1}(\phi^{t_1,y_1})-Y^e_{t_2}(\phi^{t_2,y_2})$, the key is to observe that all four terms \eqref{Ye1}-\eqref{Ye4} are sum of integrals of the form
	$$I_t(y):=\int_0^tv^n_{s-}(x)\,\phi^{t,y}_s(x)\,ds$$
	where  $|v^n_{s-}(x)|\leq 1$ (for example, $v^n_{s-}(x)=[u^n_{s-}(x-L^{-1}) ] (1-u^n_{s-}(x))$ for \eqref{Ye1}), and that
	\begin{align*}
	|I_{t_1}(y_1)-I_{t_2}(y_2)|&=\Big|\int_{t_2}^{t_1}v^n_{s-}(x)p^n_{t_1-s}(x,y_1)ds + \int_0^{t_2} v^n_{s-}(x)[p^n_{t_1-s}(x,y_1)-p^n_{t_2-s}(x,y_2)]ds\Big|\\
	&\leq \int_{t_2}^{t_1}p^n_{t_1-s}(x,y_1)\,ds + \int_0^{t_2} \big|p^n_{t_1-s}(x,y_1)-p^n_{t_2-s}(x,y_2)\big|\,ds.
	\end{align*}
	Hence
	\begin{align*}
	&|Y^e_{t_1}(\phi^{t_1,y_1})-Y^e_{t_2}(\phi^{t_2,y_2})|\\
	\leq &\,  \frac{2M^eB^n_e}{L^e} \Big(\sum_{x\in e^n}\int_{t_2}^{t_1}p^n_{t_1-s}(x,y_1)\,ds + \int_0^{t_2} \big|p^n_{t_1-s}(x,y_1)-p^n_{t_2-s}(x,y_2)\big|\,ds\Big) \\
	&+ \frac{M^eB_{e,e}}{L^e}\Big( \int_{t_2}^{t_1}p^n_{t_1-s}(x^e_2,y_1)\,ds + \int_0^{t_2} \big|p^n_{t_1-s}(x^e_2,y_1)-p^n_{t_2-s}(x^e_2,y_2)\big|\,ds\Big)\\
	&+ \sum_{\tilde{e}\in E(v)\setminus e} \frac{M^{\tilde{e}}B_{e,\tilde{e}}}{L^e} \Big(\int_{t_2}^{t_1}p^n_{t_1-s}(x^e_1,y_1)\,ds + \int_0^{t_2} \big|p^n_{t_1-s}(x^e_1,y_1)-p^n_{t_2-s}(x^e_1,y_2)\big|\,ds\Big).
	\end{align*}
	The new error terms $E^{(3,e)}+E^{(4,e)}$
	\begin{align*}
	&\sum_e \Big( E^{(3,e)}_t(\phi)+ E^{(4,e)}_t(\phi)\Big)\\
	=&\sum_{e\in E(v)}(M^eL^e)^{-1}  \sum_{\tilde{e}\in E(v)\setminus e} \sum_{z\in x^{e}_1} \sum_{w\in x^{\tilde{e}}_1} \int_0^t (\xi_{s-}(w) - \xi_{s-}(z)) \phi_s(z) \,\big(dP^{z,w}_s-A^n_{e,\tilde{e}} \,ds\big)\\
	&+\sum_{e\in E(v)}(M^eL^e)^{-1}  \sum_{z\in x^e_2} \sum_{w \in x^e_1} \int_0^t (\xi_{s-}(w) - \xi_{s-}(z)) \phi_s(z) \,\big(dP^{z,w}_s- \,A^n_{e,e} \,ds\big) \notag\\
	& \qquad\qquad\qquad\qquad\qquad\quad+(\xi_{s-}(z) - \xi_{s-}(w)) \phi_s(w) \,\big(dP^{w,z}_s- A^n_{e,e} \,ds\big)
	\end{align*}
	can be treated in the same way, so do terms $Z^{e}$,  $E^{(1,e)}$ and $E^{(2,e)}$.
	The proof of Lemma \ref{L:Moment_hatu} can be completed as in \cite[Section 6]{MR3582808}, using the uniform estimates in Theorem \ref{T:RWHK}. 
\end{proof}
	
	Observe that the last term in \eqref{E:Moment_hatu} $\lim_{n\to\infty} (\inf_e M^e)^{-p}= 0$, by Conditions (a)-(b) and the assumption that $\inf_e \alpha_e>0$.
	It can then be shown, as in \cite[Section 5]{MR3582808} that \eqref{E:Moment_hatu} implies
	\eqref{E:Tightu} holds for $\widehat{u}^n$. 
	This idea is described in the paragraph before Lemma 7 in \cite{MR1346264} and page 648 of \cite{MR2786644}: we approximate the c\`adl\`ag process $\widehat{u}^n$ by  a {\it continuous} process $\tilde{u}$ and invoke a tightness criterion inspired by Kolmogorov's continuity theorem. 
	Finally, the ``weak" compact containment condition (condition (a) of Theorem 7.2 in \cite[Chapter 3]{MR838085})  follows from the fact $0\leq u^n\leq 1$, \eqref{E:Semigroups} and  Lemma \ref{L:Moment_hatu} with $t_1=t_2$.
	
	The proof of Proposition \ref{T:Tight} is complete. 	
\end{proof}

\begin{proof}[Proof of Theorem \ref{T}]
	The proof of Theorem \ref{T} is complete by Proposition \ref{T:Tight}, \eqref{subseqLimit} and Lemma \ref{WellposeFKPP}. 
\end{proof}

\section{Proof of Theorem \ref{prop:SDE}}\label{S:SDE}

%We first argue that \eqref{U1}-\eqref{U2} has a unique weak solution taking values in [0,1]. Therefore 
%$U^n=\{U^n_x\}_{x\in \Gamma^n}$ in the statement of Theorem \ref{prop:SDE} exists.	

%This follows from a standard approximation argument, even though the coefficents with square roots are not Lipschitz; see for instance \cite[Section 2.1]{mueller2019speed}. 

%Based on the  uniform heat kernel estimates for $p^n$ in Theorem \ref{T:RWHK},	the proof follows from a standard $L^2$ estimate as in, for instance, Section 3 of Muller \cite{MR1149348}.  We provide only the key steps here.

The proof follows the footsteps of that of Theorem \ref{T}, and many estimates are simpler and very similar. So we provide only the key steps here.
As before we begin with the approximate martingale problems for $U^n$. %that is analogous to  \eqref{term1}-\eqref{term2}. 
Using this, we identify any sub-sequential limit and obtain a Green's function representation for  $U^n$. The proof is complete by establishing tightness, based on the  Green's function representation.

Before proving convergence, we first argue that $U^n$, after the described interpolation, is an element in $\mathcal{C}([0,\infty),\,\mathcal{C}_{[0,1]}(\Gamma))$. 
Indeed, if  the initial condition $U^n(0)$ is bounded between 0 and 1, then there exists a weak solution $\{U^n_x\}_{x\in \Gamma^n}$ to  \eqref{U1}-\eqref{U2} with initial condition $U^n(0)$ and takes values in [0,1] for all $t\geq 0$. This follows from an approximation argument  as for the SPDE  \eqref{fkpp0}:
one first approximates $\sqrt{x(1-x)}$ by Lipschitz functions $\{a_k(x)\}_{k}$ taking values in $[0,1]$ such that $a_k(0)=a_k(1)=0$ and constructs solutions $\{U^{n,k}\}_{k}$ using standard theory, then applies the comparison principle and passing to a sub-sequential limit $k\to\infty$. See, for instance, \cite[Section 2.1]{mueller2019speed}.
Furthermore,
weak uniqueness of \eqref{U1}-\eqref{U2}  follows from duality argument as in \cite{MR2014157}. Therefore $U^n(t)\in \mathcal{C}_{[0,1]}(\Gamma)$ for all $t\geq 0$. Finally, since $t\mapsto U^n_x(t)$ is continuous for each $x\in\Gamma^n$, and a compact subset of $\Gamma$ contains only finitely many $x\in \Gamma^n$, we have $U^n\in \mathcal{C}([0,\infty),\,\mathcal{C}_{[0,1]}(\Gamma))$ based on the norm of $\mathcal{C}_{[0,1]}(\Gamma)$ in \eqref{normC}.
 
%Strong existence of solution can be checked  based on the heat kernel estimates in Section \ref{S:DHK} and argue as in \cite[Theorem 3.2]{MR876085}. 

\subsection{Approximate martingale problem}
This section is similar to Section \ref{S:ApproxMtg}.  Recall the inner products \eqref{Def:innerprod} and let $\phi: [0,\infty)\times \Gamma \to \RR$ be a continuous function that is continuously differentiable in $t$, twice continuously differentiable and has compact support in $x\in \Gamma$ and satisfies the gluing condition \eqref{glue_phi}.

As before we focus on a single vertex $v\in V$, and 
for each $e\in E(v)$ we enumerate the set $e^n$ as $(x_1^e,\,x_2^e,\,x_3^e,\cdots)$ along the direction of $e$ away from $v$; see {\bf Figure \ref{Fig:LatticeGn}}.
From our construction \eqref{U1}-\eqref{U2},  for $t\ge 0$ and  $e\in E(v)$, 
\begin{align}
&\<U^n(t),\,\phi_t\>_e-  \<U^N(0),\,\phi_0\>_e  -\int_0^t \<U^n(s),\partial_s\phi_s\>_e\,ds  
\nonumber\\
=\,& \frac{1}{L^e}\sum_{x\in e^n\setminus \{x^e_1\}}\left\{\int_0^t
 \phi_s(x)\Big[\mathcal{L}_n U_x + \beta_e\, U_x(1-U_x)\Big]  ds +
 \int_0^t \phi_s(x)
 \sqrt{\gamma_e\,L^e\,U_x(1-U_x)}  dB_x(s)\right\}
\label{term1_Un}\\
 &\,+\, \frac{1}{L^e}\int_0^t \phi^e_1\,
\left[\mathcal{L}_n U^e_1 + L^e\,\frac{\hat{\beta}(v)}{deg(v)}\, U^e_1(1-U^e_1) \right]  ds,
\label{term2_Un}
\end{align}
where we used the abbreviations $U^e_k:=U_{x_k^e}$ and  $\phi^e_k:=\phi_t(x_k^e)$. 

\smallskip
%\noindent
{\bf Laplacian and gluing condition. }  
We first consider the terms involving the operator $\mathcal{L}_n$ in  \eqref{term1_Un}- \eqref{term2_Un}. The careful construction of the generator $\mathcal{L}_n$ of the CTRW $X^n$ in Remark \ref{Rk:RW}, including condition \eqref{Cond_C} for the conductances $\{C_{e,\tilde{e}}\}$, as well as the gluing condition for $\phi$, are crucial for the lemma below. 
\begin{lemma}\label{L:Laplace_Un}
Suppose $\{U^n\}$ converge almost surely in $C([0,\infty),C_{[0,1](\Gamma)})$ along a sub-sequence to a limit $U^{\infty}$. Then along that sub-sequence, almost surely,
\[
\sum_{e\in E(v)}\frac{1}{L^e}\sum_{x\in e^n}  \phi_s(x)\,\mathcal{L}_n U^n_x(s) \to \sum_{e\in E(v)} \alpha_e \int_{e} U^{\infty}(s,x)\,\Delta \phi(s,x)\,m(dx).
\]
uniformly for all $t\in [0,\infty)$ and $v\in V$.
\end{lemma}

\begin{proof}
By Condition \eqref{Cond_C}, 
\begin{equation}\label{Def:theta_n}
\sup_{s\geq 0}\frac{1}{L^e}\sum_{x\in e^n\setminus \{x^e_1\}} \left|\mathcal{L}_n U^n_x(s) \,-\, \alpha_e \Delta_LU^n_{x}(s) \right| \to 0
\end{equation}
uniformly for all edges and vertices, where $\Delta_L$ is the discrete Laplacian defined in \eqref{dLaplacian}; see Remark \ref{Rk:RWGen}.
%So we can ``replace" $\mathcal{L}_n U^n(x)$ by $\alpha_e \Delta_LU^n(x)$ when $x\neq x_1^e$, without losing much.
Applying summation by parts
twice and omitting the  superscript $e$ for simplicity,
\[\sum_{k=2}^N\phi_k(\nabla U_k-\nabla U_{k-1})=\phi_N \nabla U_N-\phi_2\nabla U_1-\nabla \phi_{N-1}U_N +\nabla \phi_2 U_2+\sum_{j=3}^{N-1}U_j(\nabla \phi_j -\nabla \phi_{j-1}),\]
where $\nabla U_k=U_{k+1}-U_k$. Since $\phi_s$ has compact support on $\Gamma$, 
\begin{align}\label{ByParts_0}
\sum_{k\geq 2}\phi^e_k\Delta_LU^{e}_{k} = 
- (L^e)^2\,[\phi^e_2\,(U^e_2-U^e_1)+(\phi^e_3-\phi^e_2)\,U^e_2]+\sum_{j\geq 3} U^e_j \Delta_L\phi^{e}_{j}
\end{align}
From the above two displays and also \eqref{Generator_n2},
we see that by
adding up the terms in \eqref{term1_Un}-\eqref{term2_Un} that involve the operator $\mathcal{L}_n$,  
\begin{align}
&\;\frac{1}{L^e}\sum_{x\in e^n}\phi_s(x)\,\mathcal{L}_n U_x \notag\\
=\,&\frac{\alpha_e}{L^e}\left\{
-(L^e)^2\,[\phi^e_2\,(U^e_2-U^e_1)+(\phi^e_3-\phi^e_2)\,U^e_2]+\sum_{j\geq 3} U^e_j \Delta_L\phi^{e}_{j} \right\}  \notag\\
&+\,\frac{1}{L^e}\,\phi^e_1\left\{ \alpha_e(L^e)^2\big(U^e_2 -U^e_1\big)\,+\,\sum_{\tilde{e}\in E(v):\,\tilde{e}\neq e}\big(U^{\tilde{e}}_1 -U^e_1\big)\,C_{e,\tilde{e}}\,L^e 
\right\}\,+\, \kappa_n\,\|\phi_s\|_{\infty} \notag\\ 
=\,&\alpha_e L^e\Big\{
- (\phi^e_3-\phi^e_2)\,U^e_2 -\big(\phi^e_2-\phi^e_1\big)\big(U^e_2 -U^e_1\big)
\Big\} \label{Un1}\\
&+\,\phi^e_1
\sum_{\tilde{e}\in E(v):\,\tilde{e}\neq e}\big(U^{\tilde{e}}_1 -U^e_1\big)\,C_{e,\tilde{e}}
+\frac{\alpha_e}{L^e}\sum_{j\geq 3} U^e_j \Delta_L\phi^{e}_{j} \,+\, \kappa_n\,\|\phi_s\|_{\infty} , \label{Un2}
\end{align}
where, by \eqref{Def:theta_n}, $\kappa_n$ is a constant that tends to 0 uniformly for all $e\in E(v)$, $v\in V$ and $s\geq 0$. %Here we used the fact that $U^n$ is bounded between 0 and 1 almost surely.

We now sum over $e\in E(v)$. Suppose $\{U^n\}$ converge almost surely along a sub-sequence. Then by the \textit{gluing condition} \eqref{glue_phi}, the term \eqref{Un1} (after summing over $e\in E(v)$) will tend to zero along that sub-sequence. The first term of \eqref{Un2} also tends to zero (after summing over $e\in E(v)$), by the same argument for \eqref{aver1_result}-\eqref{err1} based on the symmetry of the conductances $\{C_{e,\tilde{e}}\}$ and the uniform continuity of $\phi$. The second term of \eqref{Un2}, converges to $\alpha_e \int_{e} U^{\infty}(s,x)\,\Delta \phi(s,x)\,m(dx)$. The proof is complete.
\end{proof}

\smallskip
%\noindent
{\bf Limiting martingale problem. } Convergences of the other terms in  \eqref{term1_Un}- \eqref{term2_Un} follow directly from the continuity of $\phi$. For example, for the term involving $\hat{\beta}(v)$,
\[
\sum_{e\in E(v)}\frac{1}{L^e}\int_0^t\phi^e_1\, L^e\,\frac{\hat{\beta}(v)}{deg(v)}\, U^e_1(1-U^e_1)  \, ds \to 
\hat{\beta}(v)\int_0^t\phi_s(v)\, U^{\infty}_v(s)\,\big(1-U^{\infty}_v(s)\big)\,ds,
\]
which explains the term  $deg(v)$  in the denominator.
Combining our calculations as in \eqref{LimitingMtg}-\eqref{LimitingQuad},  we have shown that, under convergence in distribution in 	$\mathcal{C}([0,\infty),\,\mathcal{C}_{[0,1]}(\Gamma))$,
\begin{equation}\label{subseqLimit_Un}
\text{ any sub-sequential limit of } \{U^n\}\text{ solves the SPDE } \eqref{fkpp1} \text{ weakly.}
\end{equation}

\subsection{Green's function representation}
This section is similar to Section \ref{S:green}. We shall obtain a Green's function representation of $U^n$ that is analogous to \eqref{E:Green_n_sim}-\eqref{E:Green_n2_sim}. Such representation will be useful for proving tightness.

Applying the approximate martingale problem \eqref{term1_Un}-\eqref{term2_Un}  with the same test function $\phi_s:=\phi^{t,y}_s$ in \eqref{E:testfcn}, namely
\begin{equation*}
\phi_s(x):=\phi^{t,y}_s(x):= 
 p^{n}(t-s,x,y)  \quad \text{for }s\in[0,t]
\end{equation*}
and zero otherwise, we obtain that  $\partial_s\phi_s+\mathcal{L}_n\phi_s =0$ and hence
for $t\geq 0$ and $y\in \Gamma^n$,
\begin{align}
&U^n_y(t) -  \big(P^n_{t}u_0\big)(y)  +\int_0^t \< U^n(s),\,\mathcal{L}_n\phi_s \> - \< \mathcal{L}_n U^n(s),\,\phi_s \> \,ds
 \label{green0_Un}\\
=\,&\sum_{e} \frac{1}{L^e}\sum_{x\in e^n\setminus \{x^e_1\}}\left\{\int_0^t
\phi_s(x)\,\beta_e\, U_x(1-U_x) \, ds +
\int_0^t \phi_s(x)
\sqrt{\gamma_e\,L^e\,U_x(1-U_x)} \, dB_x(s)\right\}
\label{green1_Un}\\
&\,+\, \sum_{e} \frac{1}{L^e}\int_0^t \phi^e_1\,
 L^e\,\frac{\hat{\beta}(v)}{deg(v)}\, U^e_1(1-U^e_1) \, ds
\label{green2_Un}.
\end{align}

%where $\epsilon_n$ is a term independent of $t$ and which tends to 0 as $n\to\infty$, uniformly for $x\in e^n\setminus \{x^e_1\}$ and $e\in E(v)$ (see Remark \ref{Rk:RWGen}).

\subsection{Tightness}
This section is similar to Section \ref{S:tight}. Our goal is to prove the following tightness result.
%which indicates that our sequence of approximation densities is stable.
\begin{prop}\label{T:Tight_Un}
	Suppose the assumptions in Theorem \ref{prop:SDE} hold. Then the sequence $\{U^n\}_{n\geq 1}$ is tight in  
	$C([0,T],\,\mathcal{C}_{[0,1]}(\Gamma))$ for every $T>0$. 
\end{prop}

\begin{proof}%[Proof of Proposition \ref{T:Tight_Un}]
The desired tightness follows once we can show that (i) the ``weak" compact containment condition (condition (a) of Theorem 7.2 in \cite[Chapter 3]{MR838085}) holds and (ii) for any $\epsilon>0$, one has
	\begin{align}
	& \lim_{\delta\to 0} \limsup_{n\to\infty}\,\P\,\bigg( \sup_{\substack{t_1-t_2<\delta\\0\leq t_2\leq t_1\leq T}}
	\big\|U^n(t_1)-U^n(t_2) \big\|\,>\epsilon \bigg) = 0.
	\label{E:TightU_n}
	\end{align}		
To show \eqref{E:TightU_n}, it is enough to show that \eqref{E:TightU_n} holds with $U^n$ replaced by any other terms in the decomposition given in \eqref{green0_Un}-\eqref{green2_Un}.

Note that the summation-by-parts calculation \eqref{ByParts_0} still holds if $\phi$ and $U$ are interchanged. From this, the fact that $0\leq U^n\leq 1$ and the Gaussian upper bound \eqref{e:2.14}-\eqref{e:2.15} for $\phi_s$, we obtain that the integral in \eqref{green0_Un}  
\begin{align}\label{SymDifference}
\left|\int_0^t \< U^n(s),\,\mathcal{L}_n\phi_s \> - \< \mathcal{L}_n U^n(s),\,\phi_s \> \,ds\right|\leq \kappa_n\,\sqrt{t} 
\end{align}
for $t\geq 0$, where $\kappa_n$ is a constant that tends to 0 uniformly for all $e\in E(v)$, $v\in V$ and $t\geq 0$.

Furthermore, as in Lemma \ref{L:Moment_hatu}, if we let 
$\widehat{U}^n_y(t)= \eqref{green1_Un} \,+\,\eqref{green2_Un}$,
then we have the following moment estimate:
For any $p\geq 2$ and $T\geq 0$ and compact subset $K$ of $\Gamma$, there exists a constant  $C_{T,p,K}\in(0,\infty)$  such that
	\begin{align}
	\label{E:Moment_hatu_Un}
	&\E \left|\widehat{U}^n_{y_1}(t_1)- \widehat{U}^n_{y_2}(t_2)\right|^p \leq C_{T,p,K}\,\Big(|t_1-t_2|^{p/4}+d(y_1,y_2)^{p/2} \Big) 
	\end{align}
	for all $0\leq t_2\leq t_1\leq T$, $y_1,\,y_2\in \Gamma^n\cap K$ and $n\geq 1$.

The ``weak" compact containment condition (condition (a) of Theorem 7.2 in \cite[Chapter 3]{MR838085})  follows from the fact $0\leq U^n\leq 1$ and  \eqref{E:Moment_hatu_Un} with $t_1=t_2$. With \eqref{SymDifference},  \eqref{E:Moment_hatu_Un} and  the uniform heat kernel estimates in  Theorem \ref{T:RWHK}, the proof can be completed as in Section \ref{S:green}.

The proof of Proposition \ref{T:Tight_Un} is complete. 	
\end{proof}

\begin{proof}[Proof of Theorem \ref{prop:SDE}]
The proof of Theorem \ref{prop:SDE} is complete by Proposition \ref{T:Tight_Un}, \eqref{subseqLimit} and Lemma \ref{WellposeFKPP}. 
\end{proof}

\medskip

\section{Appendix}
In this appendix, we specify the  notions of weak solution and mild solution to SPDE on graphs, give a proof of Lemma \ref{L:WFdual}, point out some generalizations of our results and some open problems.

\subsection{Solutions to SPDE on graphs}

For completeness we give the precise definition for the notions of weak solutions and of mild solutions for SPDE \eqref{fkpp0}:
\begin{equation*}
\left\{\begin{aligned}
\partial_t u      &\,= \mathcal{L} \,u +\beta\,u(1-u) + \sqrt{\gamma\,u(1-u)}\dot{W}
& &\quad\text{on }\overset{\circ}{\Gamma} \\
% \partial_x u(t,x) &\,=  0   & &\qquad \text{for  } t>0,\,x=0\\
\nabla_{out}u\cdot [\alpha] &\,= -\hat{\beta}\, u(1-u)  & &\quad \text{on }V.
\end{aligned}\right.
\end{equation*}

These definitions are analogous to the usual ones (see \cite{MR876085}) but have extra boundary conditions.

%\subsection{White noise on graphs}
% [Chapter 1 of Mini course SPDE] or Lecture notes of Walsh,  (for worthy martingale measure, see Page 20 of [Mini SPDE])
\begin{definition}
	Let  $\dot{W}$ be the space-time white noise on $\Gamma$ endowed with the product of Lebesque measures $\mu:=m(dx)\otimes dt$. That is, 
	$\dot{W}:= \{\dot{W}(A)\}_{A\in \mathfrak{B}(\Gamma\times[0,\infty))}$ are centered Gaussian random variables with covariance 
	$\E[\dot{W}(A)\,\dot{W}(B)]=\mu(A\cap B)$.
	The white noise process $(W_t)_{t\geq 0}$ is defined by $W_t(U):=\dot{W}(U\times[0,t])$ where $U\in \mathfrak{B}(\Gamma)$. Denote by $\mathcal{F}_t$ to the sigma-algebra generated by $\{W_s(U):\;0\leq s\leq t,\,U\in \mathfrak{B}(\Gamma)\}$ and call $(\mathcal{F}_t)_{t\geq 0}$ the filtration generated by $\dot{W}$.
\end{definition}
It can be checked as in \cite{MR876085} that $\{W_t(U),\,\mathcal{F}_t\}_{t\geq 0,\,U\in \mathfrak{B}(\Gamma)}$ is an orthogonal (hence worthy) martingale measure,
%with a dominating measure being its covariance measure, 
so that we have a well-defined notion of stochastic integral with respect to $W$ for a class of integrands which contains  the collection of all predictable functions $f$ such that 
\begin{equation*}
	% \E\int_\Gamma\int_\Gamma\int_{(0,T]}|f(x,t)f(y,t)|\,K_W(dxdydt)<\infty 
	\E\int_{(0,T]}\Big(\int_\Gamma|f(x,t)|\,m(dx)\Big)^2\,dt<\infty 
\end{equation*}
for all $T>0$. By the Gaussian upper bound in \eqref{DiffusionGE}, the stochastic integrals that appear throughout this paper, including  \eqref{E:MildSol} below, are well-defined.

%\subsection{Weak solutions and mild solutions}

\begin{comment}
For $\phi\in  C_c(\Gamma)\cap C^2(\mathring{\Gamma})$, 
applying integration by parts to \eqref{fkpp0} formally gives
\begin{align*}
\int_\Gamma u_t(x)\phi(x)\,\nu(dx) &= \int_\Gamma u_0(x)\phi(x)\,\nu(dx)
+\frac{1}{2}\int_0^t\int_{\Gamma} \nabla\Big(\ell\,\alpha\,\nabla \phi\Big)(x)\,\,u_s(x)\,m(dx)\,ds \notag\\	
&+ \int_0^t\int_\Gamma \beta(x)\,u_s(x)(1-u_s(x))\,\phi(x)\,\nu(dx)\,ds \notag\\
&+ \int_{[0,t]\times \Gamma}
\phi(x)\sqrt{\gamma(x)\,u_s(x)\big(1-u_s(x)\big)}\,\ell(x)\,dW(s,x)\\
& -\frac{1}{2}\int_0^t\sum_{v\in V}\phi(v)\big(\nabla_{out}u_s\cdot [\ell\alpha]\big)(v)\,ds \\
& +\frac{1}{2}\int_0^t\sum_{v\in V}u_s(v)\big(\nabla_{out}\phi\cdot [\ell\alpha]\big)(v)\,ds, 
\end{align*}
where we recall that $\nu(dx)=\ell(x)m(dx)$ is the symmetrizing measure for diffusion $X$. Hence we make the following definition.
\end{comment}

The notions of  weak solution and  mild solution are given below. 
Let 
$\mathcal{B}_{[0,1]}(\Gamma)$ be the space of Borel measurable functions on $\Gamma$ taking values in the interval $[0,1]$. Note that we do not need an absolute sign inside the square root  $\sqrt{\gamma u(1-u)}$.
\begin{definition}\label{Def:WeakSol}
	A process  $u=(u_t)_{t\geq 0}$ taking values in $\mathcal{B}_{[0,1]}(\Gamma)$  is a {\bf weak solution} to SPDE \eqref{fkpp0} with initial condition $u_0$ if there is a space-time white noise $\dot{W}$ on $\Gamma\times[0,\infty)$ such that (i) $u$ is adapted to the filtration generated by $\dot{W}$ and (ii) for any $\phi\in \mathcal{C}_c(\Gamma)\cap \mathcal{C}^2(\mathring{\Gamma})$ satisfying the gluing condition \eqref{glue},
	we have
	\begin{align}\label{E:WeakSol}
		\int_\Gamma u_t(x)\phi(x)\,\nu(dx) &= \int_\Gamma u_0(x)\phi(x)\,\nu(dx)
		+\frac{1}{2}\int_0^t\int_{\Gamma} \nabla\Big(\ell\,\alpha\,\nabla \phi\Big)(x)\,u_s(x)\,m(dx)\,ds \notag\\	
		&+ \int_0^t\int_\Gamma \beta(x)\,u_s(x)(1-u_s(x))\,\phi(x)\,\nu(dx)\,ds \notag\\
		&+ \int_{[0,t]\times \Gamma}
		\phi(x)\sqrt{\gamma(x)\,u_s(x)\big(1-u_s(x)\big)}\,\ell(x)\,dW(s,x) \notag\\
		& +\frac{1}{2}\int_0^t\sum_{v\in V}\phi(v)\hat{\beta}(v)\,u_s(v)(1-u_s(v))\,\ell(v)\,ds
		%& +\frac{1}{2}\int_0^t\sum_{v\in V}u_s(v)\big(\nabla_{out}\phi\cdot [\ell\alpha]\big)(v)\,ds. 
	\end{align}
	for all $t\geq 0$, almost surely, where $\nu(dx)=\ell(x)m(dx)$. 
\end{definition}

\begin{comment}
The distribution of a mild solution or a weak solution remains the same no matter how we define the functions $\alpha,\,\beta,\,\gamma$ on the set of vertices $V$; so do the distribution of an $\mathcal{L}$-diffusion on $\Gamma$.
\end{comment}

\begin{definition}\label{Def:MildSol}
A process  $u=(u_t)_{t\geq 0}$ taking values in $\mathcal{B}_{[0,1]}(\Gamma)$ is a {\bf mild solution} to SPDE \eqref{fkpp0} with initial condition $u_0$ if there is a space-time white noise $\dot{W}$ on $\Gamma\times[0,\infty)$ such that (i) $u$ is adapted to the filtration generated by $\dot{W}$ and (ii)
$u$ solves the integral equation
\begin{align}\label{E:MildSol}
u_t(x)= P_t u_0(x) &+ \int_0^t P_{t-s}\big(\beta\,u_s(1-u_s)\big)(x)\,ds \notag \notag\\
&+ \int_{[0,t]\times \Gamma}p(t-s,x,y)\,\ell(y)
\sqrt{\gamma(y)\,u_s(y)\big(1-u_s(y)\big)}\,dW(s,y)\notag\\
&+\frac{1}{2}\int_0^t\sum_{v\in V}p(t-s,x,v)\,\ell(v)\,\hat{\beta}(v)\,u_s(v)(1-u_s(v))\,ds,
\end{align}
where $p(t,x,y)$ is defined in \eqref{Def:p} and $P_tf(x)=\E_xf(X_t)=\int_{\Gamma}f(y)p(t,x,y)\nu(dy)$.
\end{definition}

A weak solution may fail to be a mild solution in general (see, e.g., \cite{MR2981850}). However, we have the following.
\begin{lemma}\label{L:weakmild}
Suppose Assumptions \ref{A:graph}, \ref{A:Coe_a} and \ref{A:Coe_b} hold and
 $u_0\in \mathcal{C}_{[0,1]}(\Gamma)$. An adapted process
$u$ with  $u_t\in \mathcal{B}_{[0,1]}(\Gamma)$ for all $t\geq 0$ is a weak solution of \eqref{fkpp0} if and only if it is a mild solution. 
\end{lemma}

\begin{proof}
A proof follows from that of Shiga's result \cite[Theorem 2.1]{MR1271224}, thanks to our heat kernel estimates in Theorem \ref{T:DiffusionHK} and the boundedness of $u$. We provide a sketch of proof.	

Suppose $u$ is a weak solution of \eqref{fkpp0}. Then \eqref{E:WeakSol} holds for the test function $\phi(x)= p(t, y,x)$ for $x\in \Gamma$, where $(t,y)\in (0,\infty)\times \Gamma$ is fixed. This is because (i) $\phi\in Dom_{L^2}(\mathcal{L})$
%where $Dom_{L^2}(\mathcal{L})$ and $Dom (\mathcal{E})$ are  the domain of the generator $\mathcal{L}$ and the Dirichlet form of $X$ in $L^2(\Gamma,\nu)$, 
and hence $\phi$ satisfies the gluing condition, and (ii) 
 $Dom_{L^2}(\mathcal{L}) \subset Dom (\mathcal{E})=W^{1,2}(\Gamma,\nu)\cap \mathcal{C}^1(\Gamma)$ and so $\phi$ is the limit, under the Sobolev norm $\|\cdot \|_{W^{1,2}(\nu)}$, of a sequence in   $\mathcal{C}_c(\Gamma)\cap \mathcal{C}^2(\mathring{\Gamma})$ that satisfy the gluing condition.
Furthermore, the time-derivatives of $p(t,x,y)$ also possess Gaussian bounds and H\"older continuity; see
\cite[Theorem 2.32]{gyrya2011neumann}. This allows us to extend  \eqref{E:WeakSol} to time-dependent test functions $\phi:\,[0,T]\times \Gamma \to \RR$, where $\phi(t,x)=p(T-t,y,x)$ and $T,y$ are fixed. This gives 
\begin{align}\label{E:MildSol_t}
P_{T-t}u_t(x)= P_T u_0(x) &+ \int_0^t P_{T-s}\big(\beta\,u_s(1-u_s)\big)(x)\,ds \notag \notag\\
&+ \int_{[0,t]\times \Gamma}p(T-s,x,y)\,\ell(y)
\sqrt{\gamma(y)\,u_s(y)\big(1-u_s(y)\big)}\,dW(s,y)\notag\\
&+\frac{1}{2}\int_0^t\sum_{v\in V}p(T-s,x,v)\,\ell(v)\,\hat{\beta}(v)\,u_s(v)(1-u_s(v))\,ds
\end{align}
and hence \eqref{E:MildSol} by letting $t\to T$. Therefore $u$ is also a mild solution.

The converse is straightforward.
Suppose $u$ is a mild solution. We insert \eqref{E:MildSol} into $\int_0^t\int_{\Gamma} \nabla\Big(\ell\,\alpha\,\nabla \phi\Big)(x)\,u_s(x)\,m(dx)\,ds$ and apply a stochastic Fubini theorem to verify 
\eqref{E:WeakSol}.
\end{proof}

For existence and uniqueness of solutions,
results  in \cite{MR3634278} cannot be directly applied since  we have a non-Lipschitz coefficient $\sqrt{\gamma\,u(1-u)}$.
Nonetheless,  existence of a mild solution taking values in $\mathcal{B}_{[0,1]}(\Gamma)$ follows from an approximation and tightness argument.
%: first approximate $\sqrt{\gamma\,u(1-u)}$ by Lipschitz functions $\{a_k(u)\}$ such that $a_k(0)=a_k(1)=0$ and construct solutions $u_n$ using standard theory, then apply the comparison principle and passing to a sub-sequential limit $k\to\infty$. 
See, for instance, \cite[Section 2.1]{mueller2019speed}. Furthermore, weak uniqueness for \eqref{fkpp0} holds by Lemma \ref{WellposeFKPP}.

\begin{proof}[Proof of Lemma \ref{L:WFdual} (Duality)]
Recall that  Assumptions \ref{A:graph}, \ref{A:Coe_a} and \ref{A:Coe_b} in force. Recall from \eqref{Def:nu} and \eqref{Def:p} that $\nu(dy)= \ell(y) m(dy)$ and $\P_x(X_t\in dy)=p(t,x,y)\,\nu(dy)$.

To simplify notation, we write $p^{\epsilon}(x,y)=p(\ep,x,y)$ for the transition density of the symmetric diffusion $X$ and
define $z:=1-u$ and $\bar z_t(x):=\int_\Gamma z_t(y)p^{\ep}(x,y)\nu(dy)$. 

%Noting that $x \mapsto \bar z_t(x)$ is smooth and
Using the weak formulation \eqref{E:WeakSol} with test function $\phi^{\ep,x}(y)= p^{\ep}(x,y)$ (we can do so as explained in the proof of Lemma \ref{L:weakmild}), we obtain
\begin{align*}
\bar z_t(x) - \bar z_0(x)&=
\frac{1}{2}\int_0^t\int_{\Gamma} \nabla_y\Big(\ell(y)\,\alpha(y)\,\nabla_y p^{\ep}(x,y)\Big)\,\,z_s(y)\,m(dy)\,ds \notag\\	
&- \int_0^t\int_\Gamma \beta(y)\,z_s(y)(1-z_s(y))\,p^{\ep}(x,y)\,\ell(y)\,m(dy)\,ds \notag\\
&+ \int_0^t\int_\Gamma
p^{\ep}(x,y)\sqrt{\gamma(y)\,z_s(y)\big(1-z_s(y)\big)}\,\ell(y)\,dW(s,y)\\
&+\frac{1}{2}\int_0^t\sum_{v\in V}p^{\ep}(x,v)\,\hat\beta(v)z_s(v)\big(1-z_s(v)\big)\,\ell(v)\,ds.
\end{align*}

Fix $x_1, \ldots x_n$ and let $\mathcal{L}_{\bar z}$ be the generator  of the process $(\bar z_t(x_1), \ldots, \bar z_t(x_n))_{t\geq 0}$. 
Using the It\^o's formula \cite[Lemma 2.3]{MR1743769} (each process $(\bar z_t(x))_{t\geq 0}$ is a semi-martingale so this is legitimate), we see that as in  Section 8.1 of \cite{MR3582808},
\begin{align}
\hbox{drift}\left( \mathcal{L}_{\bar z} \prod_i \bar z_t(x_i) \right) 
&= 
\frac{1}{2}\sum_{i=1}^n \prod_{j \neq i} \bar z_t(x_j) \int_\Gamma\nabla_y\Big(\alpha(y)\,\nabla_y p^{\ep}(x_i,y)\Big)\,\,z_t(y)\,m(dy)
\nonumber\\
&+  \sum_{i=1}^n \prod_{j \neq i} \bar z_t(x_j) \int_\Gamma \beta(y)[z_t^2(y) - z_t(y)] p^{\ep}(x_i,y) \,\ell(y)\,m(dy) 
\notag\\
& +  \sum_{i=1}^{n-1} \sum_{j=i+1}^n \prod_{k \neq i,j} \bar z_t(x_k) \int_\Gamma \gamma(y)[ z_t(y)(1 - z_t(y)) ] p^{\ep}(x_i,y) p^{\ep}(x_j,y) \ell(y) m(dy)
\nonumber\\
&+\frac{1}{2}\sum_{i=1}^n\sum_{v\in V}\prod_{j\neq i}\bar z_t(x_j)\,\,\hat\beta(v)\big(z_t^2(v)-z_t(v)\big)\,p^{\ep}(x_i,v)\,\ell(v) \label{udual}.
\end{align}
The dual process is a system of branching coalescing particles performing $\mathcal{E}$-diffusions on $\Gamma$. During their lifetime all particles  independently give birth (here means the particle splits into two) at rate $\beta(x)$ at the interior $x\in\mathring{\Gamma}$ and give birth at rate $\hat \beta(v)L^{i,v}_t$ at vertex $v\in V$.  
In addition, for $i < j$, particle $j$ is killed by particle $i$ at rate $\frac{\gamma(x_i)}{\ell(x_i)} L^{i,j}_t$ where 
$L^{i,j}_t$ denotes the local time of the process $x_j-x_i$ at $0$.
Writing $\mathcal{L}_x$ for the generator of this particle system, 
\begin{align}
\hbox{drift}\left(\mathcal{L}_x \prod_i \bar z_t(x_i) \right) &= 
\frac{1}{2}\sum_{i=1}^n \prod_{j \neq i} \bar z_t(x_j) \cdot\nabla\Big(\alpha\,\nabla \bar z_t\Big)(x_i)
\nonumber\\
&+ \sum_{i=1}^n  \prod_{j \neq i} \bar z_t(x_j) \cdot \beta(x_i)[\bar z_t^2(x_i)- \bar z_t(x_i) ] 
\notag \\
&+  \sum_{i=1}^{n-1} \sum_{j= i+1}^n  \prod_{k \neq i,j} \bar z_t(x_k) \cdot \left[\frac{\gamma(x_i)}{\ell(x_i)}\bar z_t(x_i)(1- \bar z_t(x_j)) \right]\,\delta_{\{x_j=x_i\}} \nonumber\\
&+ \frac{1}{2}\sum_{i=1}^n \sum_{v\in V} \prod_{j \neq i} \bar z_t(x_j) \cdot \hat\beta(v)[\bar z_t^2(v)- \bar z_t(v) ] \delta_{\{v=x_i\}}
\label{udual2}
\end{align}
in which we used the formal notation $dL^{i,j}_t=\delta_{\{x_j(t)=x_i(t)\}}\,dt$ and $dL^{i,v}_t= \delta_{\{x_i(t)=v\}}\,dt$.
%The precise meaning of  the last term involves integration w.r.t. local times and is explained in \eqref{3rdLT0}.

We continue to follow \cite[Section 8.1]{MR3582808} to assert that
\begin{equation}\label{EKdual}
\E F(\bar z_t, x(0)) - \E F(\bar z_0, x(t)) = \E \int_0^t G(\bar z_{t-s}, x(s)) - H(\bar z_{t-s}, x(s)) \, ds.
\end{equation}
with  $G(\bar z, x)=\mathcal{L}_{\bar z}F(\bar z, x)$ and $H(\bar z, x)=\mathcal{L}_{x}F(\bar z, x)$, where
$$
F(\bar z, x) = \prod_{i=1}^{n} \bar z(x_i) \quad\text{if }x=(x_1,\cdots,\,x_n).
$$
%By the continuity of $x\mapsto z_t(x)$, we have $F(\bar z_t, x(0)) \to F(z_t, x(0))$ and  $F(\bar z_0, x(t)) \to F(z_0, x(t))$ a.s. as $\ep\to 0$, so using the bounded convergence theorem, $$E F(\bar z_t, x(0)) - \E F(\bar z_0, x(t)) \to  \E F(z_t, x(0)) - \E F(z_0, x(t)).$$
To prove the desired duality formula \eqref{WFdual}, i.e.
\begin{equation}\label{WFdual'}
\E \prod_{i=1}^{n(0)} z_t(x_i(0)) =  \E \prod_{i=1}^{n(t)} z_0(x_i(t)),\quad t\geq 0,
\end{equation}
it remains to argue that the RHS of \eqref{EKdual} tends to zero as $\epsilon\to 0$.

The first term in  \eqref{udual} agrees with that of  \eqref{udual2} on the RHS of \eqref{EKdual}. The second and the third terms of  \eqref{udual} and  \eqref{udual2} can be teated in the same way as in \cite[Section 8.1]{MR3582808}, thanks to Theorem \ref{T:DiffusionHK} and Assumption  \ref{A:Coe_b}. The reason we need to divide $\gamma(x_i)$ by $\ell(x_i)$ in the third term of 
\eqref{udual2} is as follows: there are two $p^{\ep}$ factors but only one $\ell$ in the third term of \eqref{udual}.
This explains why the coalescence rate should be $\frac{\gamma(x_i)}{\ell(x_i)} L^{i,j}_t$ rather than  $\gamma(x_i) L^{i,j}_t$ as in \cite[Section 8.1]{MR3582808}.

Finally, we consider the new contribution to the RHS of \eqref{EKdual} from the last term of \eqref{udual} and  \eqref{udual2}, coming from the branchings in the vertex set $V$.

\bigskip

 The contribution to \eqref{EKdual} from the last term of \eqref{udual2} is
\begin{equation}\label{3rdLT0}
\E \int_0^t  
\sum_{i=1}^{n(s)} \sum_{v\in V} \prod_{j \neq i} \bar z_{t-s}(x_j(s)) \cdot \hat\beta(v)[\bar z_{t-s}^2(v)- \bar z_{t-s}(v) ] \,dL^{i,v}_s
\end{equation}
which converges as $\ep\to 0$ (first by dominated convergence  for fixed $v\in V$ and then by monotone convergence for $\sum_{v\in V}$) to
\begin{align}\label{3rdLT}
%\frac{1}{2}\E \int_0^t  \sum_{i=1}^{n(s)} \sum_{v\in V} \prod_{j \neq i}  z_{t-s}(x_j(s)) \cdot \hat\beta(v)[z_{t-s}^2(v)-  z_{t-s}(v) ] \,dL^{i,v}_s
\E\int_0^t\sum_{i=1}^{n(s)} \sum_{v\in V} Y_{s,t}^{i,v} \,dL^{i,v}_s,
\end{align}
where 
$$
Y^{i,v}_{s,t}= \prod_{j \neq i}  z_{t-s}(x_j(s)) \cdot \hat\beta(v)[z_{t-s}^2(v)-  z_{t-s}(v) ]
$$
is non-negative and bounded from above uniformly for $1\leq i\leq n(s)$, $s\in[0,t]$ and $v\in V$,  according to Assumption \ref{A:Coe_b}. 
The contribution to \eqref{EKdual} from the last term of \eqref{udual} is
\begin{align}
\frac{1}{2}\E \int_0^t \sum_{i=1}^{n(s)}\sum_{v\in V}\,Y^{i,v}_{s,t}\,p^{\ep}(x_i(s),v)\,\ell(v)\,ds
\end{align}
which also converges to \eqref{3rdLT}  as $\ep\to 0$,  first by dominated convergence  for fixed $v\in V$ and then by monotone convergence for $\sum_{v\in V}$ (details as in  pages 1724--1725 of \cite{MR1813840}).
The proof of \eqref{WFdual'} is complete.
%This is
%\begin{align*}
%\E \int_0^t \frac{1}{2}\sum_{i=1}^{n(s)}\sum_{v\in V}\prod_{j\neq i}\bar z_{t-s}(x_j(s))\,\,\hat\beta(v)\big(z_{t-s}^2(v)-z_{t-s}(v)\big)\,p^{\ep}(x_i(s),v)\,\ell(v)\,ds\\-\E \int_0^t  \frac{1}{2}\sum_{i=1}^{n(s)} \sum_{v\in V} \prod_{j \neq i} \bar z_{t-s}(x_j(s)) \cdot \hat\beta(v)[\bar z_{t-s}^2(v)- \bar z_{t-s}(v) ] \,dL^{i,v}_s,
%\end{align*}
\end{proof}

\subsection{Generalizations}\label{S:Generalize}

%\begin{remark}[Relaxing assumptions]\rm\label{Rk:Relax}
	Assumptions \ref{A:graph} and \ref{A:Coe_a} can both be significantly relaxed.
	The Dirichlet form method enables one to construct more general processes, such as singular diffusions on graphs as in \cite{MR2563669, MR2906858}, and  diffusions on $\RR$-trees \cite{MR2469332, MR2357676, MR2675115, MR3034461} and  fractals \cite{MR966175}. These two assumptions should be compared with conditions on the boundary of a domain and on the coefficients respectively in the construction of reflected diffusions \cite{MR2849840}.	
	The  function $\ell$ contributes to a drift %$\nabla(\ell\,a)/(2\ell)$ 
	for the diffusion which we ignored for simplicity, i.e., we assumed $\ell=1$. To incorporate such as drift in discrete approximations, see \cite[Section 6]{MR3485385}.	
	%We pursue the study of these diffusions and their applications to SPDE on random graphs in sequel work.
	%or the notion of generalized derivative as in [Fredlin 93, 2012].	
	%By exploiting the relations between geometric inequalities and diffusions on $\Gamma$ (see \cite{MR2998918, MR2962091} and the reference therein), one can relax Assumption \ref{A:graph} to generalize the heat kernel estimates in Theorems 	\ref{T:RWHK} and \ref{T:DiffusionHK}, with perhaps different  exponents. Using the latter, we believe one can deduce versions of Lemma \ref{L:Moment_hatu} and eventually generalize Theorems \ref{T} and \ref{prop:SDE} to more general graphs and other metric spaces.
%\end{remark}

%\begin{remark}[Systems of SPDE]\rm \label{System}
	%System of SPDE is a natural model for the evolution of interacting species. 
	Furthermore, with the techniques developed in this paper, one can immediately generalize the coupled SPDE in \cite[Theorem 4]{MR3582808} to the graph setting. This enables us to investigate the role of space in shaping coexistence and competition outcomes of  interacting species. In ongoing work, we plan to apply such generalizations to study the interactions of virus and sub-virus particles during co-infection in a population of susceptible cells.
%\end{remark}

\subsection{Open problems}\label{S:Open}

The study of SPDE on graphs leads to an explosion of interesting open questions. We mention only three of the important ones below.

\begin{problem}[Speed of FKPP on $\Gamma$]\rm \label{SpeedFKPP}
	Wavefront propagation and asymptotic speed for  deterministic FKPP on a class of regular trees has recently been obtained using large deviation techniques in \cite{fan2019wave}. Can we define and obtain the {\it asymptotic speed} of wavefront propagation for the weak solution of the stochastic PDE \eqref{fkpp1} in terms of properties of $\Gamma$ ? For instance, in terms of $k$ (and $\alpha,\beta,\gamma$) when $\Gamma$ is the infinite $k$-regular tree with unit branch length? The duality formula in Lemma \ref{L:WFdual} can be very  useful for this study (see \cite{brunet2006phenomenological, MR2014157, MR2793860} for the case $\Gamma=\RR$). 
	
	%Freidlin and Hu \cite{MR3084714}. 
\end{problem}

\begin{problem}[Contact process]\rm \label{Otherspde}
	Techniques developed here enables one to analyse other SPDE on graphs such as scaling limits of
	branching random walks
	%\begin{equation}\label{CBRW}
	%	\partial_t u      \,= \alpha\,\Delta u +\beta\,u + \sqrt{\gamma\,u}\,\dot{W}
	%	\quad\text{on }\overset{\circ}{\Gamma} 
	%\end{equation}
	and those of {\it contact processes}
	\begin{equation}\label{contact}
	\partial_t u      \,= \alpha\,\Delta u +\beta\,u-\delta\,u^2 + \sqrt{\gamma \,u}\,\dot{W}
	\quad\text{on }\overset{\circ}{\Gamma}.
	\end{equation}
	A fundamental and challenging question in epidemiology is to
	estimate (and show existence of) the {\it threshold infection rate} $\beta_c \in (0,\infty)$ in terms of 
	geometric properties of $\Gamma$, such that when $\beta>\beta_c$ the infection sustain with positive probability, and when  $0\leq \beta<\beta_c$ the infection dies out with probability one
	(assuming $\beta$ is a constant function and $\hat{\beta}=0$ on $V$ for simplicity).
	%A concrete problem is to find  $\beta_c$ in terms of $k$ when $\Gamma$ is the $k$-regular graphs with unit branch length. 
	See Muller and Tribe \cite[Theorem 1]{MR1296425} for such a threshold when $\Gamma=\RR$.
\end{problem}

\begin{problem}[Other SPDE and mixture models]\rm \label{Boundary}
	In this paper, we obtain the term $\hat{\beta}u(1-u)$ in \eqref{fkpp1} because we specify biased voter rule not just on edges but also at the vertex set. 
	Some other polynomial terms can arise if we specify other microscopic rules (such as contact process) near the vertices. This is left open for exploration. For example, the interacting particle system can be a mixture of the biased voter model and the contact process. See Lanchier and Neuhauser \cite{MR2209349, MR2739344} who introduced one such mixture to investigate how the interactions in spatially explicit host-symbiont systems are shaping plant community structure.
\end{problem}

% ideas from Alsion Eltheridge: Interacting SDEs indexed by points on the graph, Super random walks

\clearp 

\bibliographystyle{abbrv}
\bibliography{spdeG_S2}

\end{document}